\documentclass[fontsize=12pt,a4paper,headings=normal,
twoside=false,leqno,parskip=half-,abstract=true]{scrartcl}
\usepackage[english]{babel}
\usepackage[utf8]{inputenc}
\usepackage{changepage}
\usepackage[version=4]{mhchem}
\usepackage{graphicx}
\usepackage{mdframed}
\usepackage{float}
\usepackage{hyperref}
\hypersetup{
 pdftitle={},
 pdfauthor={},
 colorlinks=true,
 linkcolor=black,
 citecolor=black,
 filecolor=black,
 urlcolor=black}
 \usepackage{soul}
 \usepackage{comment}

\usepackage[format=plain,labelfont=bf,font=small]{caption}
\usepackage{subfigure}%replaced by subcaption
\usepackage{pdflscape}
\usepackage{xcolor}
\usepackage[arrow, matrix, curve]{xy}

\usepackage{tikz-cd}
\usepackage{caption}
\usepackage{amsmath,amsthm}
\usepackage{amssymb} 
\usepackage[normalem]{ulem}
\usepackage{blkarray}
\newtheorem{theorem}{Theorem}[section]
\newtheorem{lemma}[theorem]{Lemma}
\newtheorem{cor}[theorem]{Corollary}
\newtheorem{prop}[theorem]{Proposition}
\newtheorem{remark}[theorem]{Remark}

\theoremstyle{definition}
\newtheorem{definition}[theorem]{Definition}

\numberwithin{equation}{section}

\DeclareMathAlphabet{\mathpzc}{OT1}{pzc}{m}{it}

\definecolor{refkey}{rgb}{1,0,0}
\definecolor{labelkey}{rgb}{1,0,0}

\title{Mathematical modeling and analysis of the Notch--Delta pathway}
\author{Angela Stevens, Nicola Vassena}

\date{\today}

\begin{document}

\vspace{-0.2cm}

\maketitle

\begin{center}
{\large \textit{Dedicated to Vera Natalia (*2026) and to Heinrich (*1928)}}
\end{center}
\begin{abstract}
    In this paper mathematical models for the evolutionary conserved Notch--Delta pathway are developed and analyzed in order to better understand how two neighboring biological cells can become different. We pursue a structure-based stoichiometric type of approach, such that no specific reaction kinetics have to be defined. Only their dependencies on the relevant species participating in the model network are taken into account. Reaction networks and their related systems of ODEs are presented and analyzed with respect to their capacity for symmetry-induced bifurcations. The possibility to obtain a singular Jacobian is analyzed symbolically. This approach is valid for parameter-rich kinetics, where the parametrization of the steady-state fluxes and of the first derivatives of the reaction rates evaluated at the steady state are independent. In this context, also with the help of abstract minimal models, we could mathematically identify some of the Notch pathway's features being more relevant than others.\\
    
    {\footnotesize \noindent \textbf{Keywords:} Notch pathway, reaction networks, symmetry-induced bifurcation, symbolic analysis,  systems of ODEs}
\end{abstract}

\vspace{-0.28cm}

{\small 
\tableofcontents}

\section{Introduction}

The so-called Notch or Notch--Delta pathway is one of the 
evolutionary highly conserved and central
cellular signaling pathways among
 metazoan organisms, which include 
 worms, fish, flies, amphibians, reptiles, mammals, birds,  
 and humans. The Notch pathway controls several fundamental differentiation
processes and regulates binary cell fate decisions. Thus its proper
functioning is of utmost importance. 
Nevertheless, the precise mechanisms on how 
neighboring and seemingly similar 
cells can finally become 
different w.r.t. the expression of Notch receptors (and/or Delta/Serrate/LAG-2 ligands) are not fully understood. 
The outcome of such a dynamical process may not be robust in the selection of 
specific cells among a few, but it has at least to be robust in some sense, 
e.g. w.r.t. the percentage and spatial patterning of differently expressing 
cells in a given tissue. 

In order to contribute to a better understanding of such dynamics 
from a theoretical 
point of view, we set up and analyze mathematical models  for two cells 
only. 
Our models are first formulated as reaction 
networks and take into account  
some of the known and conjectured structural features of the Notch pathway. 
We then identify the most relevant 
of those features 
which can give rise to 
symmetry breaking in the systems of ordinary differential
equations related to these networks.  
Based on biological descriptions, our models focus only on those processes that 
`mathematically' can create qualitative differences between two initially identical cells. 
%and is not intended as a quantitative model of expression levels; rather, 
We target symmetry breaking, without imposing specific relative 
orderings between the two cells w.r.t. quantitative expression
levels of Notch/Delta. 
We do not explain layers, stripes etc. in tissues here, which can also be seen
in Notch--Delta patterning.

Spontaneous symmetry breaking is well understood mathematically, with applications in biology too. It may arise, for instance, through the loss of stability of a homogeneous steady state via a zero-eigenvalue bifurcation, which generically leads to a pitchfork bifurcation and the emergence of two inhomogeneous, i.e. asymmetric, steady states \cite{GuHoBook}. This perspective has been developed in detail within \emph{equivariant dynamical systems theory} \cite{golubitsky2003symmetrybook}. It provides a framework to analyze symmetry-induced bifurcations, including networks. Here we draw on this perspective but focus on reaction networks where symmetry is intrinsic since the interacting cells are initially identical. A systematic approach that combines symmetry with stoichiometric considerations, as customary in reaction network theory, appears to be largely underexplored. To the best of our knowledge, symmetry has only occasionally been used to establish bistability in specific reaction-network models, see e.g. \cite{HellRendall15proof}.

Many mathematical models of the Notch--Delta pathway trace back to \cite{Collier96},
where the pathway's dynamics are described by explicitly given nonlinear regulatory functions.
Notch in cell 1 is activated by Delta of cell $2$, while this Notch activity
inhibits the production of Delta in the same cell $1$, and vice versa. The mechanism leading to differentiation,
i.e. different expression levels of Notch/Delta in the two cells, is thus `directly' encoded in the
specific choice of the nonlinear interaction functions. In our present paper, the focus is different.
We are, instead, searching within the network structure itself for the possibility of differentiation,
and do not fix nonlinearities.

The model in \cite{Collier96} has subsequently been extended into several directions,
including the coupling with other signaling pathways and also with diffusion \cite{BaniYaghoub2010}, long-range signaling  \cite{BerkemeierPage23}, or stochastic cell rearrangements \cite{Oguma2023}.
A different body of work on the Notch pathway relates to \cite{Sprinzak2010}. This model was mathematically rigorously analyzed in \cite{Moore19}.
It adopts a structural description of the system, closer in spirit to our perspective here,
and in particular also focuses on the additional possible cis-interaction between Notch and Delta,
i.e., their interaction within the same cell, %\textcolor{red}{which blocks other reactions ofthe two entities for a while,}
see \ref{eq:modelii} below, and in particular Remark \ref{rmk:moore}. 

Finally, the Notch--Delta pathway has also been analyzed for oscillatory gene-expression dynamics in the context of the segmentation clock in vertebrate embryos, often in interaction with other pathways \cite{RodriguezGonzalez2007,Goldbeter2008}.

%\cite{RodriguezGonzalez2007,Goldbeter2008,Sun2020,Zhang2024}. 

\subsection{Biochemically informed mathematical models}
Our mathematical models are based on descriptions %and discussions 
in \cite{HenSchwei19, Andersson11, yamamoto2010}, see Fig.~\ref{fig:figure1} for an overview. In order to focus on the internal structure and interactions of the pathway, we deliberately omit ubiquitous production and degradation reactions. These are not specific to the model and bear the risk of obscuring, rather than clarifying, the analysis. Thus their omission allows us to better understand the essential features of this highly conserved pathway.

\begin{figure}[H]
    \centering   \includegraphics[width=0.8\textwidth]{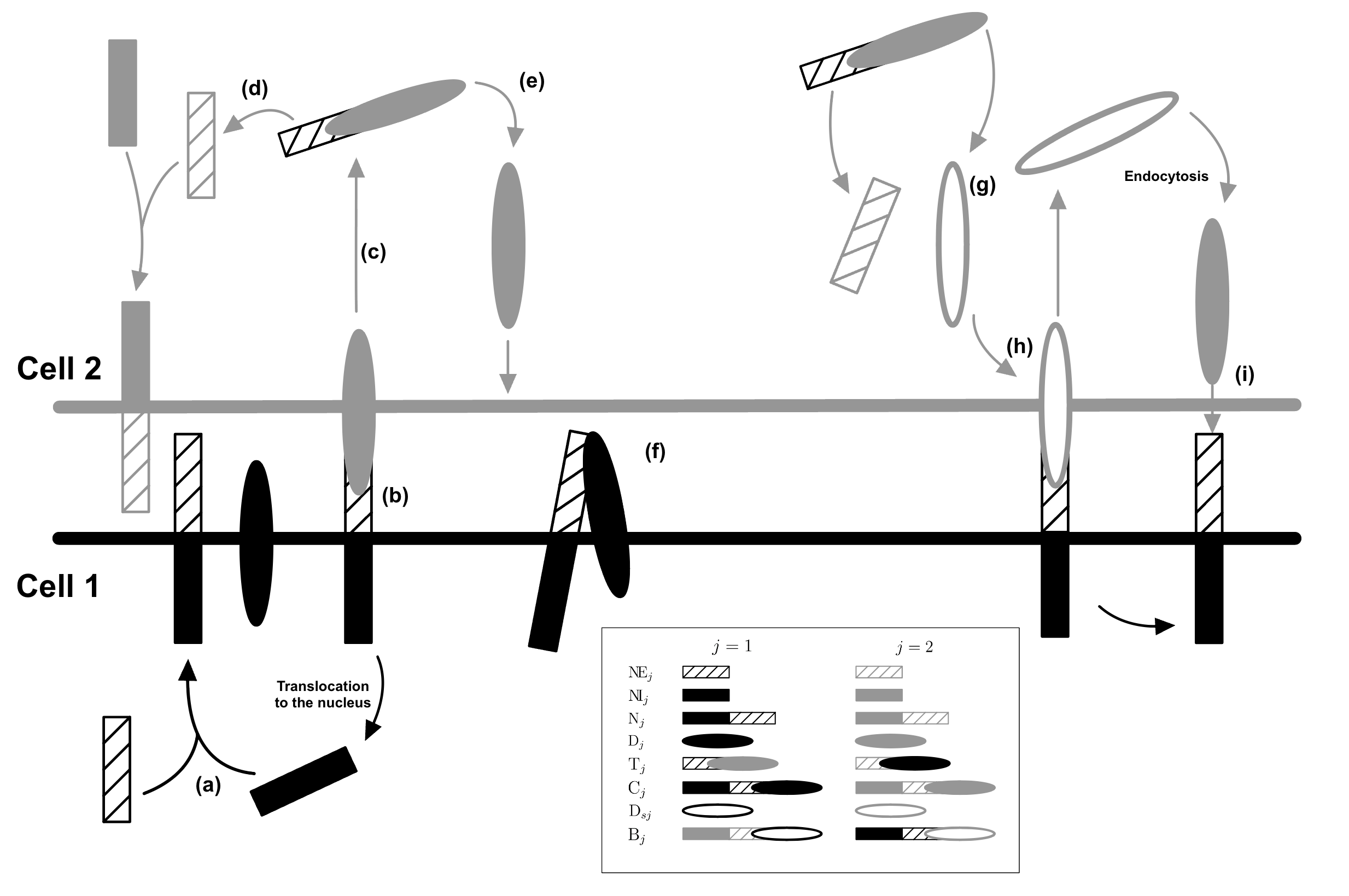}
\caption{Schematic representation of the biological mechanisms taken
into account in 
our mathematical models, on the left \ref{eq:modeli} and the cis-reactions \ref{eq:modelii}, on the right the ligand-activation variant \ref{eq:modeliii}. Not all interactions are {depicted 
symmetrically}, for visual clarity, 
but they are to be understood as occurring 
symmetrically in both cells. Letters refer to the biochemical explanation in the text. For the complete hypergraph 
representation of the reaction models, see Fig.~\ref{fig:figure2}.}
\label{fig:figure1}
\end{figure}

For the central part of our mathematical model, we consider the Notch receptors $\ce{\ce{N_1}}$
of e.g. cell 1, which arise from an extracellular domain 
$\ce{N\!E_1}$ and an intracellular  domain $\ce{\ce{N\!I_1}}$ \textbf{(a)}. The Notch 
receptors are activated by cell-surface ligands from a neighboring cell \textbf{(b)}, 
i.e. the receptors release their intracellular domain upon transactivation of their extracellular domain by Delta/Serrate/LAG-2 ligands $\ce{D_2}$ of cell 
2.
The released intracellular domain $\ce{N\!I_1}$ then, after some other reactions (see Remark \ref{rmk:nucleus}) translocates to the nucleus of 
cell 1. The otherwise newly formed \emph{transaction complex}  $\ce{T_2}$ is endocytozed by cell 2 \textbf{(c)}, and it can be 
recycled into an extracellular Notch
domain $\ce{N\!E_2}$ \textbf{(d)} and a ligand $\ce{D_2}$ of cell 2
\textbf{(e)}. These central processes take place in both cells and are, 
in essence, represented by the following six reactions, which are pairwise symmetric. 
This symmetry is also {reflected by}  
our choice of numbering the reactions.
\begin{equation}\label{eq:modeli}\tag{\textbf{BI}}
\begin{aligned}
\ce{{N\!I_1}} + \ce{N\!E_1} \quad&\overset{11}{\ce{->}} \quad \ce{N_1} \quad &\quad \quad \ce{N\!I_2} + \ce{N\!E_2} \quad&\overset{21}{\ce{->}} \quad \ce{N_2}\\
    \ce{N_1}+\ce{D_2} \quad &\overset{12}{\ce{->}} \quad \ce{N\!I_1} + \ce{T_2} \quad &\quad \quad  \ce{N_2}+\ce{D_1} \quad &\overset{22}{\ce{->}} \quad \ce{N\!I_2} + \ce{T_1}\\
\ce{T_1} \quad &\overset{13}{\ce{->}} \quad \ce{N\!E_1} + \ce{D_1}\quad &\quad \quad \ce{T_2} \quad &\overset{23}{\ce{->}} \quad \ce{N\!E_2} + \ce{D_2}.
\end{aligned}
\end{equation}

\begin{remark}\label{rmk:nucleus}
The translocation of $\ce{{N\!I}}$  to the nucleus 
can be explicitly modeled by introducing $\ce{N\!I^{Nucleus}_1}$ and 
$\ce{N\!I^{Nucleus}_2}$, and splitting the reactions 12 and 22 as
\begin{equation}\label{eq:modeli'}
\tag{\textbf{BI'}}
\begin{aligned}
\ce{N_1}+\ce{D_2} \quad &\overset{12'}{\ce{->}} \quad \ce{N\!I^{Nucleus}_1} + \ce{T_2} \quad &\quad \quad  \ce{N_2}+\ce{D_1} \quad &\overset{22'}{\ce{->}} \quad \ce{N\!I^{Nucleus}_2} + \ce{T_1}\\
\ce{N\!I^{Nucleus}_1} \quad &\overset{12''}{\ce{->}} \quad 
\ce{N\!I_1} \quad & \quad \quad \ce{N\!I^{Nucleus}_2} \quad &\overset{22''}{\ce{->}} \quad \ce{N\!I_2}.
\end{aligned}
\end{equation}
These additions do not affect the behavior of \emph{\ref{eq:modeli}} w.r.t. the capacity for differentiation (see Remark~\ref{rmk:nucleus2} and \hyperref[pt:translocation]
{p.~\pageref*{pt:translocation}}). They  only increase the system's dimension
and are therefore dismissed.  
\end{remark}
A cell's Notch receptors $\ce{N}_j$ can also interact with the very same 
cell's ligands $\ce{D}_j, \; j=1,2$, (\emph{cis reaction}), \textbf{(f)}. It is conjectured that this reaction does not lead to the type of Notch activation described above. So this reaction 
acts like a kind of
inhibition.
Despite its likely broad relevance, however, this interesting regulatory mechanisms
is most often overlooked \cite{HenSchwei19,Andersson11}. 
Therefore we extend our central model \ref{eq:modeli} %to a second one 
{by introducing} 
a product $\ce{C_j}$ for this cis reaction.
This product is resolved after a while again.
\begin{equation}\label{eq:modelii}\tag{\textbf{BII}}
   \ce{N_1}+\ce{D_1} \quad \underset{15}{\overset{14}{\ce{<=>}}}\quad \ce{C_1} \quad \quad \quad  \ce{N_2}+\ce{D_2} \quad \underset{25}{\overset{24}{\ce{<=>}}}\quad \ce{C_2}
\end{equation}

\begin{remark}\label{rmk:moore}
The model in \emph{\cite{Sprinzak2010}} considers 
the Notch receptor $\ce{N}$ with its intracellular domain $\ce{N\!I}$, and the Delta ligand $\ce{D}$, within one cell.
Further, constant production of $\ce{N}$ and $\ce{D}$ and linear degradation of $\ce{N}$, $\ce{N\!I}$,
and $\ce{D}$ are assumed. A fixed concentration $\ce{D}_{\operatorname{ext}}$ accounts for the Delta concentration in neighboring cells. In the mathematical follow up paper \emph{\cite{Moore19}} two interacting cells were considered in an analogous way
\begin{equation}\tag{\textbf{Cis-R}}\label{eq:moore}
\begin{aligned}
\ce{N}_1+\ce{D}_1\quad&\underset{\ce{S}1}{\ce{->}}\quad \dots  & \quad \ce{N}_2+\ce{D}_2\quad&\underset{}{\ce{->}}\quad \dots \\
\ce{N}_1+\ce{D}_{2}\quad \underset{\ce{S}2}{\ce{->}} \quad&\ce{N\!I_1} \quad \ce{->} \quad \dots & \quad \ce{N}_2+\ce{D}_{1}\quad \underset{}{\ce{->}} \quad&\ce{N\!I_2} \quad \ce{->} \quad \dots\\
\dots \quad \ce{->} \quad&\ce{N_1} \quad \ce{->} \quad \dots & \quad \dots \quad \ce{->} \quad&\ce{N_2} \quad \ce{->} \quad \dots\\
\dots \quad \ce{->} \quad&\ce{D_1} \quad \ce{->} \quad \dots & \quad \dots \quad \ce{->} \quad&\ce{D_2} \quad \ce{->} \quad \dots\\
\end{aligned}
\end{equation}

%\begin{equation}\tag{\textbf{Cis-S}}\label{eq:sprinzak}
%\begin{aligned}
%\ce{N}+\ce{D}\quad&\underset{\ce{S}1}{\ce{->}}\quad \dots\\
%\ce{N}+\ce{D}_{\operatorname{ext}}\quad \underset{\ce{S}2}{\ce{->}} \quad&\ce{N\!I} \quad \ce{->} \quad \dots\\
%\dots \quad \ce{->} \quad&\ce{N} \quad \ce{->} \quad \dots\\
%\dots \quad \ce{->} \quad&\ce{D} \quad \ce{->} \quad \dots\\
%\end{aligned},
%\end{equation}
where the dots encode production by and degradation to entities external to the system. In \emph{\cite{Sprinzak2010}} only the reactions on the left-hand side of \emph{\ref{eq:moore}} were considered with $\ce{D}_2=\ce{D}_{\operatorname{ext}}$.
It was proven in \emph{\cite{Moore19}} that the mass-action ODE system associated to \emph{\ref{eq:moore}} possesses a unique asymptotically stable steady-state if the respective parameters in the two cells are considered to be identical. In both cases, also in \emph{\cite{Sprinzak2010}}, differentiation in the sense of our present paper is excluded, unless parameters vary in the two cells from the beginning, which we want to exclude here. Although 
our model involves more entities and considers dynamics where no production or degradation terms are required,
we remark the consistency of reactions S1 and S2 in \emph{\ref{eq:moore}} with reactions 14/24 in \emph{\ref{eq:modelii}} and 12/22 in \emph{\ref{eq:modeli}}. 
\end{remark}
Further, in a third model we extend our central model \ref{eq:modeli} by
the so-called \emph{ligand activation hypothesis} \cite{yamamoto2010}. 
It is assumed that newly synthesized Delta, $\ce{D}_{sj}$, initially appears at the cell's surface in a silenced state, see \textbf{(g)} in Fig.~\ref{fig:figure1}, incapable of signaling. Thus an 
interaction between $\ce{N}_i$ and $\ce{D}_{sj}, \; i\neq j$ 
results in a \emph{blocked complex} $\ce{B_j}$, which eventually resolves \textbf{(h)}. To become activated, the silenced Delta must first be endocytozed. Only after being recycled back to the cell's surface it becomes activated ($\ce{D_{j}}$) and capable of signaling \textbf{(i)}.
The molecular mechanism of this `activation' remains
to be identified. In order to account for this process, we modify \ref{eq:modeli} to obtain
\begin{equation}\label{eq:modeliii}\tag{\textbf{BIII}}
\begin{aligned}
\ce{N\!I_1} + \ce{N\!E_1} \quad&\overset{11}{{\ce{->}}} \quad \ce{N_1} \quad &\quad \quad \ce{N\!I_2} + \ce{N\!E_2} \quad&\overset{21}{\ce{->}} \quad \ce{N_2}\\
\ce{N_1} + \ce{D_{2}} \quad &\overset{12}{{\ce{->}}} \quad \ce{N\!I_1} + \ce{T_2} \quad&\quad\quad  \ce{N_2} + \ce{D_{1}} \quad &\overset{22}{{\ce{->}}} \quad \ce{N\!I_2} + \ce{T_1}\\
\ce{N_1} + \ce{D_{s2}} \quad &\underset{17}{\overset{16}{\ce{<=>}}} \quad \ce{B_2} \quad&\quad\quad \ce{N_2} + \ce{D_{s1}} \quad &\underset{27}{\overset{26}{\ce{<=>}}}\quad \ce{B_1}\\
\ce{D_{s1}} \quad &\overset{18}{{\ce{->}}} \quad \ce{D_{1}}\quad&\quad\quad\ce{D_{s2}} \quad &\overset{28}{{\ce{->}}} \quad \ce{D_{2}}\\
\ce{T_1} \quad &\overset{19}{\ce{->}} \quad \ce{N\!E_1} + \ce{D_{s1}}\quad &\quad \quad \ce{T_2} \quad &\overset{29}{\ce{->}} \quad \ce{N\!E_2} + \ce{D_{s2}} 
\end{aligned}
\end{equation}

\begin{remark}
The blocked complex $\ce{B}_j$ is intercellular and therefore does not clearly belong to either cell 1 or 2. The choice of labeling is consequently entirely arbitrary, and nothing would differ by exchanging $\ce{B}_1$ with $\ce{B}_2$, and vice versa.
\end{remark}
\begin{remark}
The silenced state $\ce{D}_{sj}$ of Delta acts crucially different
w.r.t. the capacity for differentiation in comparison to 
the translocation of the intracellular Notch domain
$\ce{N\!I_j}$ to the nucleus as
 $\ce{N\!I_j}^{Nucleus}$ in \emph{\ref{eq:modeli'}}. This is due to the presence 
of the `blocked' reactions $j6$ for $j=1,2$.
\end{remark}

So we consider the Notch--Delta dynamics in three variants:
\ref{eq:modeli}, \ref{eq:modeli}+\ref{eq:modelii}, and \ref{eq:modeliii}, 
as networks of biochemical reactions and deal with systems of ordinary differential equations (ODEs), see Sec.~\ref{sec:mainresult}. 
Since we model the interaction between two initially 
identical cells the
network graph exhibits a \emph{structural symmetry}, specifically a 
$\mathbb{Z}_2$-symmetry, induced by the involutive graph automorphism. 
Our choice of labels for the reactions makes this symmetry explicit.
Reactions $11-19$ refer to cell 1 and reactions $21-29$ to cell 2. 
Due to the supposed identity of the two cells prior to differentiation, we also assume \emph{kinetic symmetry}, 
i.e., the nonlinearities of reactions mapped into each other by the symmetry 
automorphism are identical. This assumption renders the system 
$\mathbb{Z}_2$-equivariant \cite{golubitsky2003symmetrybook}: in other words, 
the dynamics commute with the swap of indices between the two cells.

We address the symmetry-breaking differentiation process, where two initially identical cells 
develop different levels of expression of Notch (and/or Delta/Serrate/LAG-2 ligands) 
by a local bifurcation of the ODE systems {steady-states}. 
%[in the following: usage of homogenous vs. identical should be perhaps clarified or avoided. `symmetric' is also a viable alternative.] 
The number of steady states remains unchanged as long as the Jacobian at the homogeneous steady state is invertible and the implicit function theorem holds. The emergence of multiple asymmetric steady states thus requires the Jacobian to be singular, with determinant zero. This bifurcation argument brings up 
our central structural question:

{\emph{\centerline{
Which of our network topologies do allow for a singular Jacobian}}}
{\emph{\centerline{at a homogeneous steady-state with kinetic symmetry?}}}
\vskip0.1cm

{
For a positive answer to this question, we say that the network \emph{has the capacity for differentiation}. We stress that this corresponds only to a necessary linear condition. The precise nonlinear unfolding depends of course on the specific choice of the nonlinear reaction rates. In this sense,} one main problem in the analysis of most biochemical reaction networks and beyond
is indeed the lack of quantitative knowledge of the reaction rates. 
Trying to quantify them in our given setting 
would not make sense.  
%Second, even if you assume you would know the form, there is
%a computational issue, since you end up with large systems
%which you can not really handle. 
Therefore we pursue a structure-based, i.e., stoichiometric type of 
analysis to answer the above question.  In order to do so, we consider the network being endowed with a broad class of kinetics, where only the dependencies of the reaction rates on the concentrations are specified. These dependencies are encoded by the stoichiometry itself. The possibility of 
a singular Jacobian is analyzed symbolically, i.e., no steady-state computation has to be done. 
Therefore the generality of the class of kinetics being considered is 
essential. 

Our approach is valid whenever the kinetics under consideration 
are \emph{parameter-rich} \cite{VasStad24}, that is, allowing for an 
independent parametrization of both the steady-state fluxes and 
the first derivatives of the reaction rates evaluated at the steady state. 
This concept works for a wide range of kinetic 
schemes relevant and used in concrete biochemical contexts, such as Michaelis--Menten kinetics \cite{MM13}, Hill kinetics \cite{Hill10}, 
and generalized mass-action kinetics \cite{Muller:12}, all of which are parameter-rich. In contrast to this, classic mass-action 
kinetics \cite{MA64} are not parameter-rich, and therefore the results presented here do not automatically apply in such a restricted framework. %[check that it is indeed the case in our models (perhaps mass action works)].
% Nevertheless, we emphasize 
%that the law of mass action does not appear to hold reliably for non-elementary reactions intercellular and intracellular environments.

%{\color{red} PERHAPS SHIFT TO THE END. BEFORE DISCUSSION}

\paragraph{Overview of the results obtained.} 
We will show the 
following features:
\begin{itemize}
\item The stoichiometry of our central Notch--Delta model, \ref{eq:modeli} 
does not structurally support the capacity for differentiation as defined above. 
Even without {any} 
kinetic symmetry constraints, zero-eigenvalue bifurcations are excluded under any monotone kinetics. 
The system admits only one steady state for any choice of parameters. 
%[NV: We could mention that differentiation might occur under non-monotone kinetics, 
%but I suggest postponing this until we assess how the paper stands without it.]

\item
{We prove} zero-eigenvalue bifurcation - also under kinetic symmetry constraints - for both variants 
of the central model: \ref{eq:modeli}+\ref{eq:modelii}, 
and \ref{eq:modeliii}.  
There is an analogy between variant \ref{eq:modeli}+\ref{eq:modelii} and 
 \ref{eq:modeliii}. Both models contain a `Notch--Delta' interaction without direct effects. 
 These seemingly ineffective interactions however subtly introduce non-autocatalytic positive feedback, in the sense introduced in Sec.~\ref{sec:preliminary}, which enables the capacity for differentiation. %For more details see Sec.~\ref{sec:preliminary}.

\item
Then we take the purely mathematical point of view and propose
and analyze 
 minimal mathematical models, 
symmetric and for two cells with 
an analogous type of differentiation as described above and with similar constraints. 
\end{itemize}

The paper is organized as follows. Sec.~\ref{sec:preliminary} introduces the formalism on reaction networks
needed for our analysis. Sec.~\ref{sec:mainresult} formally states the main 
results for \ref{eq:modeli}, and for the two extended models 
\ref{eq:modeli}+\ref{eq:modelii}, and \ref{eq:modeliii}. Further - for 
the latter two models - we present the sources of instability, 
in form of network motifs, which these modifications introduce when compared 
to \ref{eq:modeli}. In Sec.~\ref{sec:minimal}, we propose and analyze several minimal reaction network models of two initially identical cells  with the capacity for differentiation. 
Sec.~\ref{sec:discussion} summarizes and discusses our results. Proofs are presented in Sec.~\ref{sec:proofs}.

\section{Some Formalisms for Reaction Networks}\label{sec:preliminary}

A reaction network $\mathbf{\Gamma}$ is {a pair} of sets $(\mathbf{M},\mathbf{E})$, where $\mathbf{M}=(\ce{X}_1,...,\ce{X}_{|\mathbf{M}|})$ 
is the set of species of the network, and  $\mathbf{E}$ is the set of reactions. Each reaction $j$ is a {directed relation} between nonnegative linear combination of the species
\begin{equation}\label{eq:reactionj}
j:\quad \sum_{m=1}^{|\mathbf{M}|} s^j_m \ce{X}_m \quad \underset{j}{\rightarrow} \quad  \sum_{m=1}^{|\mathbf{M}|} \tilde{s}^j_m \ce{X}_m,
\end{equation}
where the $s^j_m,\tilde{s}^j_m \ge 0$ are {so-called} 
\emph{stoichiometric coefficients} of reaction $j$. 
Species $X_m$ on the left-hand side of \eqref{eq:reactionj} with coefficients 
$s^j_m>0$ are called \emph{reactants} of reaction $j$ 
and species $X_m$ on the right-hand side with 
{$\tilde{s}^j_m>0$} are called \emph{products} of the reaction $j$. 
The $|\mathbf{M}|\times|\mathbf{E}|$ \emph{stoichiometric matrix} $S$ with entries 
\begin{equation}\label{eq:stmat}
S_{mj}=\tilde{s}^j_m - s^j_m, \quad m= 1, \dots, |\mathbf{M}|, \quad j = 1, \dots, |\mathbf{E}|
\end{equation}
encodes the net consumption or production of each species in each reaction. We assume 
each reaction to be irreversible by fixing an orientation. While thermodynamics dictates that all processes are fundamentally reversible, this irreversibility assumption 
is valid for approximations of biochemical networks and allows for the analysis of a smaller system.
Therefore also our model is idealized and, as usual, it is not intended to account for all reactions present in the respective networks. In this sense, we repeat that our models \ref{eq:modeli}, \ref{eq:modeli}+\ref{eq:modelii}, and \ref{eq:modeliii} omit production (inflow) and degradation (outflow) reactions, respectively
\begin{equation}
\ldots \quad \rightarrow \quad \ce{X}_m \quad\quad\quad \text{and} \quad\quad\quad \ce{X}_m \quad\rightarrow \quad \dots \quad,
\end{equation}
where the `$\ldots$' denote species external to the system.

The related ODE system describing the time-evolution $x(t)\in \mathbb{R}^{|\mathbf{M}|}_{>0}$ of the positive concentrations of $\ce{X}_1, \dots , \ce{X}_{|\mathbf{M}|}$ reads
\begin{equation}\label{eq:maineq}
\dot{x}=f(x):= S\mathbf{r}(x),
\end{equation}
where $S$ is {defined} in \eqref{eq:stmat} and $\mathbf{r}(x) \in \mathbb{R}^{|\mathbf{E}|}$ is the vector of nonlinear reaction rates (kinetics) that define the chemical laws governing each reaction. Left kernel vectors $w$ of the stoichiometric matrix $%\mathbb
{S}$ identify \emph{conservation laws}, since
\begin{equation}\label{eq:conquan}
\dfrac{d (w x(t))}{dt}=w\dot{x}=w%\mathbb
{S}\mathbf{r}({x})=0,
\end{equation}
\and thus $w{x}=K_w$ for a constant $K_w$, 
which is determined by the initial conditions 
$x_0\in \mathbb{R}^{|\mathbf{M}|}_{>0}$.  
Complementarily, for each $x_0$ the sets
\begin{equation}
x_0 + \operatorname{Image} %\mathbb
{S}
\end{equation}
which are invariant under the flow of \eqref{eq:maineq}, are called \emph{stoichiometric compatibility classes}. 
In order to reduce the system to a given stoichiometric compatibility class, 
we {take} a basis $\{w_1,...,w_n\}$ of $\operatorname{ker}S^T$ and 
solve the linear constraints
\begin{equation}
w_{l} x = K_{w_l} = K_{w_l} (x_0)
%\begin{cases}
%    w_{1}x=C_{w_1}({x_0})\\
%    \vdots\\
%    w_{n}x=C_{w_n}({x_0})
%\end{cases}
\end{equation}
for $l=1, \dots, n$ arbitrary variables, {with} $n$ being the dimension of $\operatorname{ker}S^T$.

Since the exact quantitative form of the reaction rates $r_j$ is generally unknown, it is common in the literature to represent them as a vector of parametric functions $\mathbf{r}(x, \mathbf{p})$, depending on a set of parameters 
$\mathbf{p}$. 
However, even the very choice of the parametric nonlinearity is arbitrary 
for our models.
Accordingly, in this paper we only assume that all reaction rates $r_{j}(x,\mathbf{p})$ are \emph{monotone chemical functions} in the following sense.
\begin{definition}\label{Monotone chemical functions}
A function $r_{j}(x)$ is called \emph{monotone chemical} if 
\begin{enumerate}
\item $r_j(x) \ge 0$, for all $x\in\mathbb{R}^{\mathbf{|M|}}_{\ge0}$;
\item $r_j(x)>0$ if and only if $x_m >0$ for all $m$ with $s^j_m>0$;
\item for $s^j_m=0$ we have $\partial r_j / \partial x_m \equiv 0$;
\item for $x>0$ and $s^j_m>0$ we have $\partial r_j / \partial x_m > 0$.
\end{enumerate}
\end{definition}
{Thus} we consider monotone nonlinearities whose dependencies are fully encoded by the stoichiometry of the system. Widely-used kinetics such as mass action (classic and generalized), Michaelis--Menten, and Hill kinetics all are monotone chemical function.

\subsection{From stoichiometry to {singular} steady-states}

A positive steady-state  
$\bar{x}$ of system \eqref{eq:maineq},
\begin{equation}\label{eq:jacobianmatrix}
0=f(\bar{x})=S\mathbf{r}(\bar{x}),
\end{equation}
requires the existence of a right kernel vector $v>0$ of the stoichiometric matrix,
i.e. $Sv=0$. Networks satisfying this structural property are called \emph{consistent} in the literature \cite{Ang07}.
The Jacobian matrix 
reads
\begin{equation}\label{eq:Jacobiannv}
f_x=  (S \mathbf{r}(x))_x=S(\mathbf{r}(x))_x,
\end{equation}
where $(\mathbf{r}(x))_x$ indicates the $\mathbf{E}\times\mathbf{M}$ matrix of the partial derivatives:
\begin{equation}
    \big(\;(\mathbf{r}(x))_x\; \big)_{jm}=\dfrac{\partial r_j}{\partial x_m}(x).
\end{equation}
Since any $r_j(x)$ is a monotone chemical function, we have that $\partial r_j(x) / \partial x_m  \neq 0$ at $x>0$ 
if and only if $s^j_m >0$, i.e., the species $\ce{X}_m$ is a reactant of reaction $j$. Therefore we consider the Jacobian matrix of $f$ symbolically in the following sense.
\begin{definition}
The entries of the $|\mathbf{E}|\times |\mathbf{M}|$ \emph{symbolic reactivity matrix}  $R$ for system 
\eqref{eq:maineq} are defined by
\begin{equation}
R_{jm}:=\begin{cases}
r_{jm} \quad \text{if $s^j_m>0$}; \quad {\mbox{ here } r_{jm}> 0}\\
0 \quad \text{otherwise} \; .
\end{cases}
\end{equation}
{Their} relation to our system will be given below. 
The $|\mathbf{M}|\times |\mathbf{M}|$ \emph{symbolic Jacobian} $G$ is then defined as
\begin{equation}
G:=SR.
\end{equation}
\end{definition}
Now fix a concentration vector $\bar{x}>0$ and a choice $\bar{R}$ for the positive parameters in the symbolic reactivity matrix, i.e. $\bar{R}_{jm}=\{\bar{r}_{jm}\}$. Consider a consistent network. Clearly, in the wide class of monotone chemical functions, we can always find a suitable nonlinearity $\mathbf{r}$ such that 
\begin{equation}
\begin{cases}
S\mathbf{r}(\bar{x})=0;\\
\dfrac{\partial r_j(x)}{\partial x_m} \bigg|_{x=\bar{x}}=\bar{r}_{jm}\quad\text{for all $j=1, \dots ,  |\mathbf{E}|$ and $m=1, \dots , |\mathbf{M}|$}.
\end{cases}
\end{equation}
Here we want to be explicit on the possible choices of nonlinearities. 
%\emph{parameter-rich} kinetic models \cite{VasStad24} as follows. 
\begin{definition}\label{def:prich}
A parametric kinetic rate model $r_j(x,\mathbf{p})$ is \emph{parameter-rich} if for every choice of a steady-state value $\bar{x}>0$ and any choice of the symbolic reactivity matrix $\bar{R}$, there exist parameters $\bar{\mathbf{p}}$ such that
\begin{equation}
\bar{r}_{jm}=\dfrac{\partial r_j (x,\mathbf{p})}{\partial x_m}\bigg|_{(x,\mathbf{p})=(\bar{x},\bar{\mathbf{p}})} \; .
\end{equation}
\end{definition}

As mentioned already, most reaction schemes used in biochemical contexts, such as {Mi\-cha\-elis--Menten }kinetics, Hill kinetics, and generalized mass-action laws, are parameter-rich \cite{VasStad24}. A crucial advantage of this framework is that for a consistent reaction network with parameter-rich kinetics, it suffices to analyze how the stability of the symbolic Jacobian $SR$ changes as the parameters in $R$ vary in order to detect steady-state bifurcations.

In this paper we consider differentiation as a zero-eigenvalue bifurcation, i.e., when a real eigenvalue of the Jacobian crosses the imaginary axis. From the parameter-rich kinetics point of view, these dynamics can be detected by the characteristic polynomial {of the symbolic
Jacobian $G=SR$:}
\begin{equation}\label{eq:charpoly}
g(\lambda)=\sum_{k=1}^{|\mathbf{M}|}a_k \lambda^{|\mathbf{M}|-k}  \; .
\end{equation}
{Each} of the coefficients $a_k$, sum of the principal minors of 
order $k$ of $G=SR$, is a function of the values of those symbolic entries $r_{jm}$ of $R$ which are larger than zero.\\ 
Conservation laws may force one or more of the coefficients $a_k$ to vanish identically, regardless of the choice of $R$. In reaction networks, this typically arises from linear conservation laws $w$ with $wS=0$. {We call} $a_k$ \emph{algebraically nonzero} if $a_k\not\equiv 0$ 
as a function of the entries of the symbolic reactivity matrix $R$. 
Let $a_{\tilde{k}}$ denote the algebraically nonzero coefficient with the highest index $\tilde{k}$. 
If the system is full rank and in particular has no conservation laws, 
{then} $a_{|\mathbf{M}|}$ corresponds to $\det G$.
In general, we say that the system is \emph{nondegenerate}, if the highest index $\tilde{k}$ for nonzero coefficient $a_{\tilde{k}}$ satisfies $\tilde{k}=|\mathbf{M}|-n$, where 
$n=\operatorname{dim}\operatorname{ker} S^T$ is the number of linearly 
independent conservation laws. In this case $a_{\tilde{k}}$ corresponds 
to the determinant of the Jacobian of \eqref{eq:Jacobiannv} 
%the right hand side of system \eqref{eq:maineq} 
reduced to a stoichiometric compatibility class. Thus differentiation can only occur if this algebraically nonzero coefficient is zero for a specific choice of parameters in $R$.

In order to tackle the question of whether there exists a choice $R^*$ such that the coefficient $a_{\tilde{k}}=0$
%, we leverage a formal structural expansion of the coefficients of the characteristic polynomial. The 
we use \emph{Child-Selections} as the main tool.
%, which definition we recall from \cite{VasStad24}.
%{\color{red}[All o.k. with $J(X_m), J(m)$ etc. now?]}
\begin{definition}
For a network $\pmb{\Gamma}=(\mathbf{M},\mathbf{E})$,
a \emph{$k$-Child-Selection triple} $\pmb{\kappa}=(\kappa,E_{\kappa},J)$, 
or $k$-CS, is 
%$\pmb{\kappa}=(\kappa,E_{\kappa},J)$ 
such that $|\kappa|=|E_{\kappa}|=k$,
$\kappa\subseteq \mathbf{M}$, $E_{\kappa}\subseteq \mathbf{E}$, and $J:\kappa\to
E_{\kappa}$ is the \emph{Child-Selection bijection} 
satisfying $s_m^{{J(\ce{X}_m)}}>0$ for all
${\ce{X}_m}\in\kappa$. 
\end{definition}
In particular, a $k$-CS $\pmb{\kappa}=(\kappa,E_\kappa, J)$ defines a
$k\times k$ stoichiometric matrix $S[\pmb{\kappa}]$ with entries
\begin{equation}
S[\pmb{\kappa}]_{ml} := S[\kappa,E_{\kappa}]_{m,{J(\ce{X}_l)}} =
\tilde s_{m}^{{J(\ce{X}_l)}} - s_{m}^{{J(\ce{X}_l)}}.
\end{equation}
Here, $S[\kappa,E_{\kappa}]$ denotes the submatrix of the stoichiometric matrix $S$ with row-indices in $\kappa$ and column-indices in $E_\kappa$. The matrix $S[\pmb{\kappa}]$ is then obtained from $S[\kappa,E_{\kappa}]$ by reshuffling the columns according to {$J$}. 
In particular, {if  $(\ce{X}_1,\dots,\ce{X}_k)$ are} the $k$ 
species relabeled 
in $\kappa$, then the stoichiometric column corresponding to the reaction $j={J(\ce{X}_m)}$ appears in $S[\pmb{\kappa}]$ as the $m^\text{th}$ column. For a given $k$-CS $\pmb{\kappa}$, we refer to the associated matrix $S[\pmb{\kappa}]$ as its \emph{Child-Selection matrix} (CS-matrix).  
If no species {$\ce{X}_m$} appears as both a reactant and product of any single reaction $j$, then CS-matrices identify all square submatrices of the stoichiometric matrix $S$ that, upon reshuffling of its columns, possess a strictly negative diagonal. This is the case in all our models   \ref{eq:modeli}, \ref{eq:modeli}+\ref{eq:modelii}, and \ref{eq:modeliii}. 

CS-matrices are instrumental in understanding the sign-changes of coefficients $a_k$ of the characteristic polynomial.
\begin{lemma}[Lemma 5.4{, \cite{VasHunt}}]\label{lem:CSexpansion}
Let $a_k$ be a coefficient of {the characteristic polynomial $g(\lambda)$ in \eqref{eq:charpoly}
of the symbolic Jacobian $G=SR$}  
. Then %the following expansion holds:
\begin{equation}\label{eq:CSexpansion}
a_k=\sum_\mathbf{\pmb{\kappa}}\operatorname{det}S[\mathbf{\pmb{\kappa}}] 
\prod_{{X_m}\in\kappa} R_{{J(X_m)}m}.
\end{equation}
\end{lemma}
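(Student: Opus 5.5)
The plan is to combine two classical expansions: principal minors in terms of the coefficients of the characteristic polynomial, and the Cauchy--Binet formula for minors of a matrix product. Write the characteristic polynomial as $\det(\lambda I - G)$ with the usual sign convention. Then $a_k$ is (up to the global sign convention fixed in \eqref{eq:charpoly}, which we take as the sum of principal minors) equal to
\begin{equation*}
a_k=\sum_{\kappa\subseteq\mathbf{M},\,|\kappa|=k}\det G[\kappa,\kappa].
\end{equation*}
Since $G=SR$, we have $G[\kappa,\kappa]=S[\kappa,\mathbf{E}]\,R[\mathbf{E},\kappa]$. The Cauchy--Binet formula then gives
\begin{equation*}
\det G[\kappa,\kappa]=\sum_{E_\kappa\subseteq\mathbf{E},\,|E_\kappa|=k}\det S[\kappa,E_\kappa]\,\det R[E_\kappa,\kappa].
\end{equation*}

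Next, expand $\det R[E_\kappa,\kappa]$ by the Leibniz formula as a signed sum over bijections $J:\kappa\to E_\kappa$, with terms $\operatorname{sgn}(J)\prod_{X_m\in\kappa}R_{J(X_m)m}$. Here $\operatorname{sgn}(J)$ is computed relative to fixed orderings of $\kappa$ and $E_\kappa$. By the definition of the symbolic reactivity matrix, $R_{J(X_m)m}\neq 0$ exactly when $s^{J(X_m)}_m>0$. Hence the only surviving bijections are precisely Child-Selection bijections, and the surviving triples $(\kappa,E_\kappa,J)$ are exactly the $k$-CS triples.

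The remaining step is to absorb the sign. Permuting the columns of $S[\kappa,E_\kappa]$ according to $J$ produces $S[\pmb{\kappa}]$, so $\det S[\pmb{\kappa}]=\operatorname{sgn}(J)\det S[\kappa,E_\kappa]$ with the same ordering conventions. Multiplying, each term equals $\det S[\pmb\kappa]\prod R_{J(X_m)m}$. Summing over all $\kappa$, $E_\kappa$ and $J$ gives \eqref{eq:CSexpansion}.

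The main obstacle is only bookkeeping. One has to check carefully that the permutation sign from Leibniz and the column reshuffling sign coincide, which holds because both are the sign of the same bijection relative to the same orderings. One also has to make sure the sign convention for $a_k$ matches the sum of principal minors as stated in the text. If \eqref{eq:charpoly} is instead read as $\det(\lambda I-G)$ literally, an extra factor $(-1)^k$ appears. The paper's explicit statement that $a_k$ is the sum of principal minors fixes this.
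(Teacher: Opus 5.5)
Your argument is correct: writing $a_k$ as the sum of $k\times k$ principal minors, applying Cauchy--Binet to $G[\kappa,\kappa]=S[\kappa,\mathbf{E}]\,R[\mathbf{E},\kappa]$, expanding $\det R[E_\kappa,\kappa]$ by Leibniz so that exactly the Child-Selection bijections survive, and absorbing $\operatorname{sgn}(J)$ into the column reshuffling that produces $S[\pmb{\kappa}]$ is the standard derivation of this expansion. The paper does not prove the lemma but only cites it from \cite{VasHunt}, where it rests on the same Cauchy--Binet argument; your caveat on conventions is apt, since the paper's own frame example uses $\det(G-\lambda I)$ (leading term $-\lambda^3$), which introduces a global factor $(-1)^{|\mathbf{M}|-k}$ that is harmless for the sign comparison in Corollary~\ref{cor:twosignsexpansion}.
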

In particular, since any $a_k$ is a multilinear homogeneous polynomial in the symbolic variables $R_{jm}$, 
it follows that
\begin{cor}[Corollary 5.7{, \cite{VasHunt}}]\label{cor:twosignsexpansion}
There exists a choice of $R$ such that $a_k=0$ if and only if there exist two $k\times k$ CS-matrices $S[\pmb{\kappa}_1]$ and $S[\pmb{\kappa}_2]$ with
\begin{equation}\label{eq:differentsign}
\operatorname{det}S[\pmb{\kappa}_1] \operatorname{det}S[\pmb{\kappa}_2]<0.
\end{equation}
\end{cor}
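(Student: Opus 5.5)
The plan is to read the statement directly off the expansion \eqref{eq:CSexpansion} of Lemma~\ref{lem:CSexpansion}. I view $a_k$ as a polynomial in the positive variables $R_{jm}$, $r_{jm}>0$, restricted to the open positive orthant of parameter space. I interpret the statement for algebraically nonzero $a_k$: if all $\det S[\pmb{\kappa}]$ vanish then $a_k\equiv 0$ and the question is void.

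\emph{Distinct Child-Selections give distinct monomials.} This is the key structural observation. The monomial $\prod_{X_m\in\kappa}R_{J(X_m)m}$ records exactly the set of pairs $\{(J(X_m),X_m): X_m\in\kappa\}$, i.e.\ the graph of the bijection $J$. Hence it determines $\kappa$, $E_\kappa$ and $J$. So two different $k$-CS triples produce two different squarefree monomials of degree $k$, and in \eqref{eq:CSexpansion} no cancellation or merging between terms can occur. The nonzero coefficients of $a_k$, as a polynomial, are therefore precisely the nonzero numbers $\det S[\pmb{\kappa}]$.

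\emph{``Only if'' direction.} If no two CS-matrices have determinants of opposite sign, then all nonzero $\det S[\pmb{\kappa}]$ share one sign, say positive. Every monomial is strictly positive on the positive orthant, and at least one coefficient is nonzero. Hence $a_k>0$ for every admissible choice of $R$, so $a_k$ never vanishes.

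\emph{``If'' direction.} Suppose $\det S[\pmb{\kappa}_1]>0>\det S[\pmb{\kappa}_2]$. For $i=1,2$, let $R^{(i)}(t)$ set the $k$ entries $R_{J_i(X_m)m}$ with $X_m\in\kappa_i$ equal to $t$, and all other symbolic entries equal to $1$. Then the $\pmb{\kappa}_i$-term equals $\det S[\pmb{\kappa}_i]\,t^k$. Every other CS monomial is a distinct set of $k$ pairs, so it must contain at least one variable set to $1$, and is therefore $O(t^{k-1})$. For $t$ large, $\operatorname{sign} a_k(R^{(1)}(t))=+$ and $\operatorname{sign} a_k(R^{(2)}(t))=-$. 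The positive orthant is connected (for instance, use the straight segment between the two points) and $a_k$ is continuous on it. The intermediate value theorem then yields a positive $R^*$ with $a_k(R^*)=0$.

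\emph{Main obstacle.} The only delicate step is the injectivity of $\pmb{\kappa}\mapsto$ monomial. Both the no-cancellation argument and the degree-domination estimate depend on it, and it follows from the fact that a monomial lists the pairs $(J(X_m),m)$ explicitly. Everything else is elementary.
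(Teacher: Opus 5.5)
Your proof is correct and is essentially the argument the paper points to: it cites \cite{VasHunt} and only remarks that $a_k$ is a multilinear homogeneous polynomial in the $R_{jm}$. Your filled-in version (injectivity of $\pmb{\kappa}\mapsto$ monomial to rule out cancellation, sign-definiteness for the ``only if'' direction, and one monomial dominating plus the intermediate value theorem for the ``if'' direction, the same device used in the proof of Thm.~\ref{thm:12}) is exactly what that remark requires. Your restriction to $a_k\not\equiv 0$ is in fact necessary rather than a convenience, since for $a_k\equiv 0$ the left-hand side holds trivially while the right-hand side fails; this matches the paper applying the corollary only to the algebraically nonzero coefficient $a_{\tilde{k}}$.
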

{Thus for consistent and nondegenerate networks, 
zero-eigenvalue bifurcations and the capacity for differentiation} require co-existence of two $\tilde{k}\times\tilde{k}$ CS-matrices with determinants of opposite sign, where again $\tilde{k}$ indicates the highest index of the nonzero coefficients $a_k$ in the characteristic polynomial of the symbolic Jacobian. \\ 

A second relevant corollary which follows from Lemma \ref{lem:CSexpansion} is
\begin{cor}[Corollary 5.1, in \cite{VasStad24}]\label{cor:csinst}
Assume there exists an unstable CS-matrix $S[\pmb{\kappa}]$, 
i.e. $S[\pmb{\kappa}]$ has at least one eigenvalue with positive real part,
then there exists a choice of $R$ such that the symbolic Jacobian $G=SR$ is unstable.
\end{cor}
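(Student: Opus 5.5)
The plan is a perturbation argument. I would choose $R$ so that, in the limit, only the reactivities selected by the Child-Selection bijection $J$ survive. In that limit the spectrum of $G=SR$ contains the spectrum of $S[\pmb{\kappa}]$, and continuity of eigenvalues then carries the unstable eigenvalue over to a genuine positive choice of $R$.

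Let $\pmb{\kappa}=(\kappa,E_\kappa,J)$ be the $k$-CS with $S[\pmb{\kappa}]$ unstable. For $\delta\ge 0$ define $R(\delta)$ entrywise as follows:
\begin{itemize}
\item $R_{J(\ce{X}_m)m}=1$ for every $\ce{X}_m\in\kappa$;
\item every other symbolic entry $R_{jm}$ with $s^j_m>0$ is set to $\delta$;
\item all structurally zero entries stay $0$.
\end{itemize}
This is admissible because $s^{J(\ce{X}_m)}_m>0$ by definition of a Child-Selection. For $\delta>0$ every symbolic entry is positive, so $R(\delta)$ is a legitimate symbolic reactivity matrix. At $\delta=0$ it is merely a boundary point of the parameter space.

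Next I compute $G_0:=SR(0)$. Column $m$ of $R(0)$ vanishes for $\ce{X}_m\notin\kappa$. For $\ce{X}_m\in\kappa$ it is the unit vector supported on the reaction $J(\ce{X}_m)$. Hence column $m$ of $G_0$ is zero for $\ce{X}_m\notin\kappa$, and equals the stoichiometric column of reaction $J(\ce{X}_m)$ for $\ce{X}_m\in\kappa$. Order the species as $(\kappa,\mathbf{M}\setminus\kappa)$. The block of $G_0$ with rows and columns in $\kappa$ is then exactly $S[\pmb{\kappa}]$, by the definition of the CS-matrix with columns reshuffled by $J$. Schematically,
\begin{equation*}
G_0=\begin{pmatrix} S[\pmb{\kappa}] & 0\\ \ast & 0\end{pmatrix}.
\end{equation*}
This matrix is block lower triangular, so its characteristic polynomial is $\det(\lambda-S[\pmb{\kappa}])\,\lambda^{|\mathbf{M}|-k}$. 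Its spectrum is therefore the spectrum of $S[\pmb{\kappa}]$ together with $|\mathbf{M}|-k$ zeros. In particular $G_0$ has an eigenvalue $\lambda_0$ with $\operatorname{Re}\lambda_0>0$.

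Finally, $G(\delta)=SR(\delta)$ depends polynomially on $\delta$, hence so do the coefficients of its characteristic polynomial. The roots of a monic polynomial depend continuously on its coefficients, so for all sufficiently small $\delta>0$ the matrix $G(\delta)$ has an eigenvalue near $\lambda_0$, still with positive real part. This gives a choice of $R$, with all symbolic entries strictly positive, for which $G=SR$ is unstable.

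The step needing the most care is the identification of the $\kappa$-block of $G_0$ with $S[\pmb{\kappa}]$ and the zero block structure. Both rest on $J$ being a bijection, so that each column of $R(0)$ has exactly one nonzero entry, and these entries sit in distinct reactions. The continuity step is routine.
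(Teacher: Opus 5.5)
Your proof is correct and is essentially the argument behind the cited result. The paper obtains the corollary from the CS-expansion Lemma: with the same degenerate choice of $R$, every monomial vanishes except those of restrictions of $\pmb{\kappa}$, whose CS-matrices are the principal submatrices of $S[\pmb{\kappa}]$, so the characteristic polynomial of $G$ tends to $\lambda^{|\mathbf{M}|-k}$ times that of $S[\pmb{\kappa}]$. You reach the same limiting polynomial more directly, from the block-triangular form of $SR(0)$, and both routes then conclude by continuity of roots.
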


\subsection{Positive feedback and instability motifs}
Let $\pmb{\kappa}=(\kappa,E_{\kappa},J)$ be a $k$-CS. Child-Selections are 
naturally nested: any restriction $\pmb{\kappa}'$ of $\pmb{\kappa}$, 
defined by a subset $\kappa'\subset \kappa$ and $E_{\kappa'}=
{J(\kappa')}\subset E_{\kappa}$, forms a $k'$-CS \; $\pmb{\kappa}'=(\kappa',E_{\kappa'},J)$.  The CS-matrix for $\pmb{\kappa}'$ is a principal submatrix of that for $\pmb{\kappa}$. This motivates to focus on minimal matrices satisfying a given property, in the following sense.

%A matrix $A$ is \emph{stable} if all eigenvalues have negative real part and \emph{unstable} if it has at least one %eigenvalue with positive real part. 
{We consider the property of a CS-matrix being unstable with a real positive eigenvalue. An \emph{unstable-positive feedback} is a $k\times k$ CS-matrix satisfying 
\begin{equation}\label{eq:posfeed}
\operatorname{sign}\operatorname{det}S[\pmb{\kappa}]=(-1)^{k-1},
\end{equation}
{such that no determinant of a $k'\times k'$ principal submatrix has 
the} sign $(-1)^{k'-1}$. {Descartes'} rule of signs applied to 
its characteristic polynomial shows that such {a} $S[\pmb{\kappa}]$ 
has exactly one real positive eigenvalue. 
{The presence of an unstable-positive feedback is sufficient 
for a consistent network to admit an unstable steady-state
due to Corollary~\ref{cor:csinst}.}
We refer to \cite{VasStad24} for further details and for an illustration 
of the general concept of \emph{unstable cores}, which encompass both positive 
and negative feedback. Corollary~\ref{cor:twosignsexpansion} implies that the occurrence of zero-eigenvalue bifurcations - and thus the capacity for differentiation - requires 
the presence of a CS matrix in the network, which is an unstable-positive 
feedback, since \eqref{eq:differentsign} cannot be satisfied if no 
CS-matrix in the network fulfills~\eqref{eq:posfeed}. Therefore, we will 
search in all three models, \ref{eq:modeli}, \ref{eq:modeli}+\ref{eq:modelii}, and \ref{eq:modeliii}, 
for CS-matrices that are unstable-positive feedbacks. We will focus on minimal topological structures via \emph{instability motifs}.
\begin{definition}
Consider a $k$-CS, $\pmb{\kappa}=(\kappa,E_\kappa,J)$ in a network $\mathbf{\Gamma}=(\mathbf{M},\mathbf{E})$ such that its associated CS matrix $S[\pmb{\kappa}]$ is an unstable-positive feedback. The associated \emph{instability motif} is defined as the subnetwork $\mathbf{\Gamma}[\pmb{\kappa}]=(\kappa, E_\kappa)\subseteq \mathbf{\Gamma}$ consisting of the species in $\kappa$ and the reactions in $E_\kappa$. If a reaction $j$ in $E_\kappa$ involves additional species $\ce{X}_m \not\in \kappa$, those species are omitted and not considered in the instability motif (see Frame~\ref{fig:toymodel} for 
an illustration).
\end{definition}
 This focus is particularly suitable for incomplete models, as most biochemical models inherently are. The presence of an unstable-positive feedback is preserved if additional reactions or species are included. Moreover, no additional unstable-positive feedback arises from the inclusion of degradation reactions. Any CS-matrix involving such reactions contains a column with a single $-1$ on the diagonal, so that its eigenvalues are given by $\{-1\}$ jointly with those of the corresponding principal submatrix obtained by omitting the degradation reaction and the corresponding species. This supports our choice to dismiss such reactions.

Finally, in experiments, \emph{autocatalysis} is a fundamental 
mechanism for self-amplification and positive feedback in chemical networks. Its formalization and its role in the origin of life and evolution in general is still much debated \cite{Andersen2020}. Here we follow the formalism in \cite{blokhuis20} which is based on the definition of the International Union of Pure and Applied Chemistry (IUPAC) \cite{IUPAC+C00876+2025}, and addresses autocatalysis from a purely structural and stoichiometric perspective. 
In this framework, autocatalysis is characterized by  certain stoichiometric submatrices named 
\emph{autocatalytic cores}. However, the presence of autocatalysis in this form does not necessarily translate into experimentally observable autocatalysis, as actual self-amplification depends not only on stoichiometry but also on more specific conditions on the reaction rates \cite{VasStad24, Nandan25}.

The formalism in \cite{VasStad24} implies that autocatalytic cores correspond exactly to CS-matrices that are unstable-positive feedbacks as defined above, with the further condition that off-diagonal entries are nonnegative
(Metzler matrices), so that we use the following:
\begin{definition}
In our context, a reaction network is called \emph{autocatalytic} if it possesses at least one unstable-positive feedback $S[\pmb{\kappa}]$ that is a Metzler matrix.
\end{definition}
Intuitively, such structures generalize simple autocatalytic pathways as e.g.
\begin{equation}
\ce{X}_1  \quad\ce{->}\quad \ce{X}_2 \quad\ce{->}\quad 2\ce{X}_1,
\end{equation}
which corresponds to the CS matrix $
\begin{pmatrix}
-1 & 2\\
1 & -1
\end{pmatrix}$, which is Metzler and unstable. 
In turn, unstable-positive 
feedbacks can also be \emph{non-autocatalytic}, in particular with some negative off-diagonal entry. Our results here show that the positive feedback, if present 
in our three models, is always non-autocatalytic.

\paragraph{Workflow - analysis of models \ref{eq:modeli}, \ref{eq:modeli}+\ref{eq:modelii}, \ref{eq:modeliii}.}
%Using the concepts introduced above, we analyze the biological models as schematically summarized below.
\begin{enumerate}
\item We check that the respective network is \emph{consistent} and \emph{nondegenerate}. This guarantees that the algebraically nonzero coefficient $a_{\tilde{k}}$ in the characteristic polynomial of the Jacobian matrix, with $a_k\equiv 0$ for $k>\tilde{k}$, corresponds to the determinant of the Jacobian restricted to a stoichiometric compatibility class, so that the stability of steady-states can be addressed by analyzing this coefficient.
\item We enforce \emph{kinetic symmetry} by assuming that the rates $r_j$ 
of symmetry-related reactions are identical. {The system 
is considered} at a homogeneous steady-state where the concentrations of symmetry-related species are also identical. Consequently, the symmetry-related entries $R_{jm}$ of $R$ coincide.
\item Under such symmetry constraint, we explicitly compute the characteristic polynomial $g(\lambda)$ of the symbolic Jacobian associated with the respective model.
\item We examine the coefficients $a_k$ of $g(\lambda)$ for terms with opposite signs in their expansion \eqref{eq:CSexpansion}. If no such terms are present, zero-eigenvalue bifurcation is precluded, and thus the capacity for differentiation. If such terms occur in the nonzero coefficient $a_{\tilde{k}}$, we say that the system has the capacity for a zero-eigenvalue bifurcation and differentiation.
\item When terms of opposite sign are present, we study the associated CS-matrices and identify the minimal ones, namely the $k\times k$ matrices $S[\pmb{\kappa}]$ satisfying \eqref{eq:posfeed} and such that no $k'\times k'$ principal minor has sign $(-1)^{k'-1}$. These matrices correspond to unstable-positive feedbacks, and the presence (resp. absence) of any such feedback that is also a Metzler matrix classifies the network as autocatalytic (resp. non-autocatalytic). Finally, we identify the corresponding \emph{instability motif}.
\end{enumerate}

\renewcommand{\tablename}{Frame}
\begin{table}[H]
\begin{mdframed}
{\footnotesize {{\textbf{Illustration of our {approach}:}} Consider  
three species $\ce{X}_1,\ce{X}_2, \ce{Y}$, and two reactions $1,2$:
\begin{equation}
\ce{X}_1+\ce{Y}  \quad\underset{1}{\ce{->}}\quad \ce{X}_2 
\quad\underset{2}{\ce{->}}\quad 2\ce{X}_1 \; .
\end{equation}
The stoichiometric matrix $S${, the} symbolic reactivity matrix $R$ and 
the Jacobian $G=SR$ are 
\begin{equation*}
S=
    \begin{pmatrix}
        -1 & 2\\
        -1 & 0\\
        1 & -1
    \end{pmatrix};\quad R=\begin{pmatrix}
        r_{1{\ce{X}_1}} & r_{1{\ce{Y}}} & 0\\
        0 & 0 & r_{2\ce{X}_2}
    \end{pmatrix}; \quad G=\begin{pmatrix} 
    -r_{1{\ce{X}_1}} & -r_{1{\ce{Y}}} & 2r_{2\ce{X}_2}\\
    -r_{1{\ce{X}_1}} & -r_{1{\ce{Y}}} & 0\\
    r_{1{\ce{X}_1}} & r_{1{\ce{Y}}} & -r_{2\ce{X}_2}
    \end{pmatrix}.
\end{equation*}
The characteristic polynomial $g(\lambda)$ of $G$ expands via Lemma~\ref{lem:CSexpansion} as
\begin{equation*}
\begin{split}
g(\lambda)
=& -\lambda^3 + (- r_{1\ce{Y}} - r_{1\ce{X}_1} - r_{2\ce{X}_2})\lambda^2 + \bigg(\operatorname{det}\begin{pmatrix}
-1 & 0\\
1 & -1
\end{pmatrix}r_{1\ce{X}_1}r_{2\ce{X}_2}
+ \operatorname{det}\begin{pmatrix}
-1 & 2\\
1 & -1
\end{pmatrix}r_{1\ce{Y}}r_{2\ce{X}_2}\bigg)\lambda\\
=& -\lambda^3 + (- r_{1\ce{Y}} - r_{1\ce{X}_1} - r_{2\ce{X}_2})\lambda^2
+ (r_{1\ce{X}_1}r_{2\ce{X}_2} - r_{1\ce{Y}}r_{2\ce{X}_2})\lambda \; . 
\end{split}
\end{equation*}
There is only one $k\times k$ CS-matrix with $\operatorname{sign}\det S[\pmb{\kappa}]=(-1)^{k-1}$ and thus only one unstable-positive feedback:
\vspace{-0.5cm}

\begin{equation*}
S[\pmb{\kappa}]=
\begin{blockarray}{ccc}
& 1 & 2\\
\begin{block}{c(cc)}
\ce{X}_1 & -1 & 2\\
\ce{X}_2 & 1 & -1\\
\end{block}
\end{blockarray}\;.
\end{equation*}
This corresponds to the species $\{\ce{X}_1,\ce{X}_2\}$ and reactions $\{1,2\}$. 
Since $S[\pmb{\kappa}]$ is a Metzler matrix, it is autocatalytic. 
The matrix is associated with the \emph{instability motif}
\begin{equation}
\ce{X}_1 + \cdots 
\quad\underset{1}{\ce{->}}\quad 
\ce{X}_2 
\quad\underset{2}{\ce{->}}\quad 
2\ce{X}_1,
\end{equation}
which omits the species $\ce{Y}$, as it does not appear in the CS-matrix. 
{Taking} the network verbatim without including any production 
or degradation reactions, the stoichiometric matrix $S$ lacks a strictly 
positive right-kernel vector $v>0$. Consequently, the network is \emph{not} 
consistent and admits \emph{no} positive steady state, but nevertheless it is sufficient to serve our illustration purposes.}}
\end{mdframed}
\caption{An autocatalytic toy model illustrating our concepts.}
\label{fig:toymodel}
\end{table}

\section{Mathematical analysis of the models}\label{sec:mainresult}
We analyze the ODE systems arising from the reaction networks of our 
central model \ref{eq:modeli}, the cis-model 
\ref{eq:modeli}+\ref{eq:modelii}, and the ligand-activation hypothesis 
\ref{eq:modeliii}, and draw conclusions on their capacity for 
differentiation. As explained in Sec.~\ref{sec:preliminary}, we associate 
the capacity for differentiation with the presence of unstable-positive 
feedback, which we characterize structurally in terms of square stoichiometric 
CS-matrices $S[\pmb{\kappa}]$. Each such matrix is associated with an \emph{instability motif} in the network. See Fig.~\ref{fig:figure2} for an overview of the instability motifs identified in the cis-model \ref{eq:modeli}+\ref{eq:modelii} and in the ligand-activation hypothesis \ref{eq:modeliii}.
\begin{figure}[htbp]
\centering
\setlength{\tabcolsep}{4pt} % no space between columns
\renewcommand{\arraystretch}{0} % no space between rows
\begin{tabular}{|c|c|}
  \hline
  \includegraphics[width=0.38\textwidth,height=0.38\textwidth,keepaspectratio]{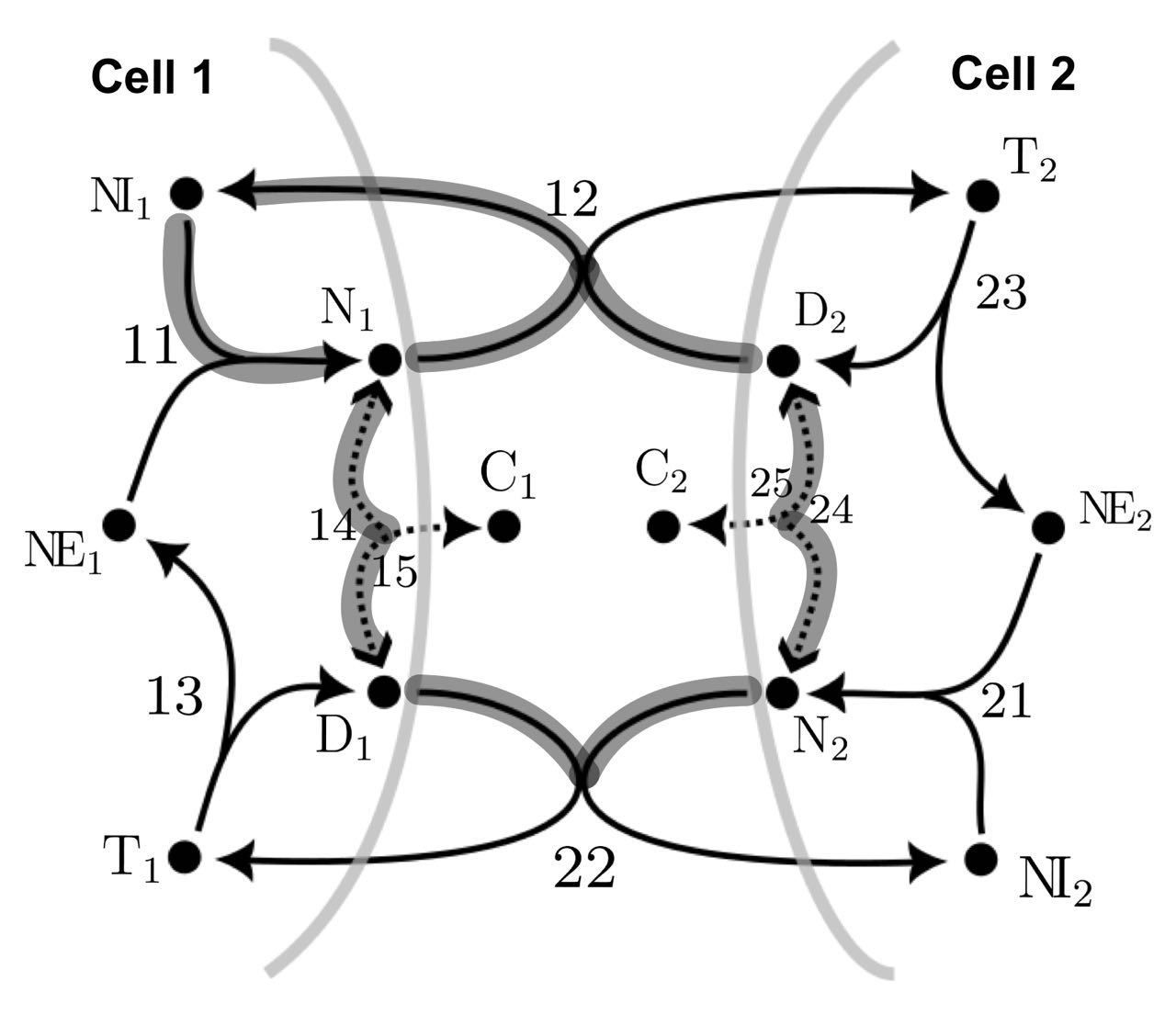} &
\includegraphics[width=0.38\textwidth,height=0.38\textwidth,keepaspectratio]{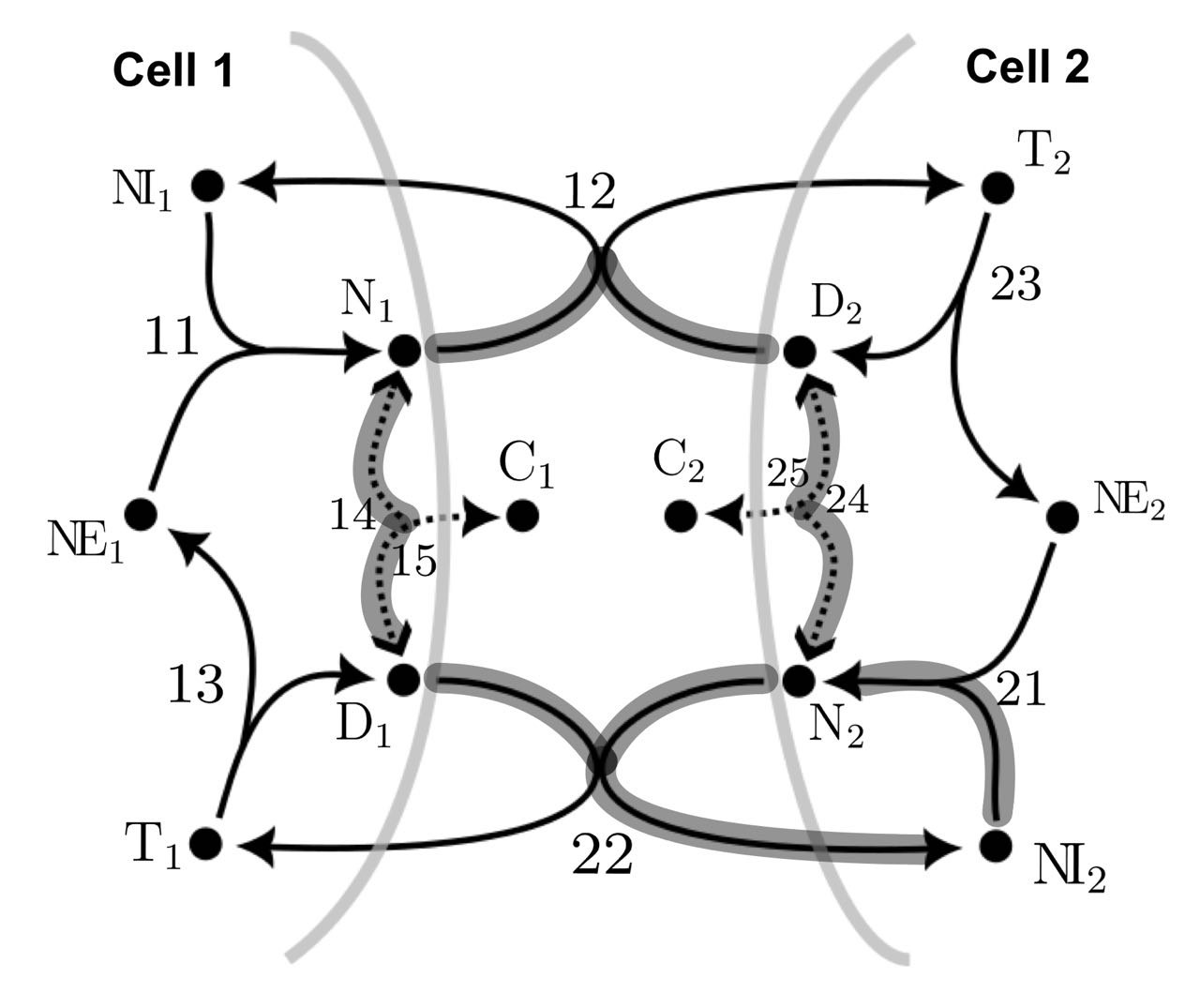}\\
  \hline
  \includegraphics[width=0.38\textwidth,height=0.38\textwidth,keepaspectratio]{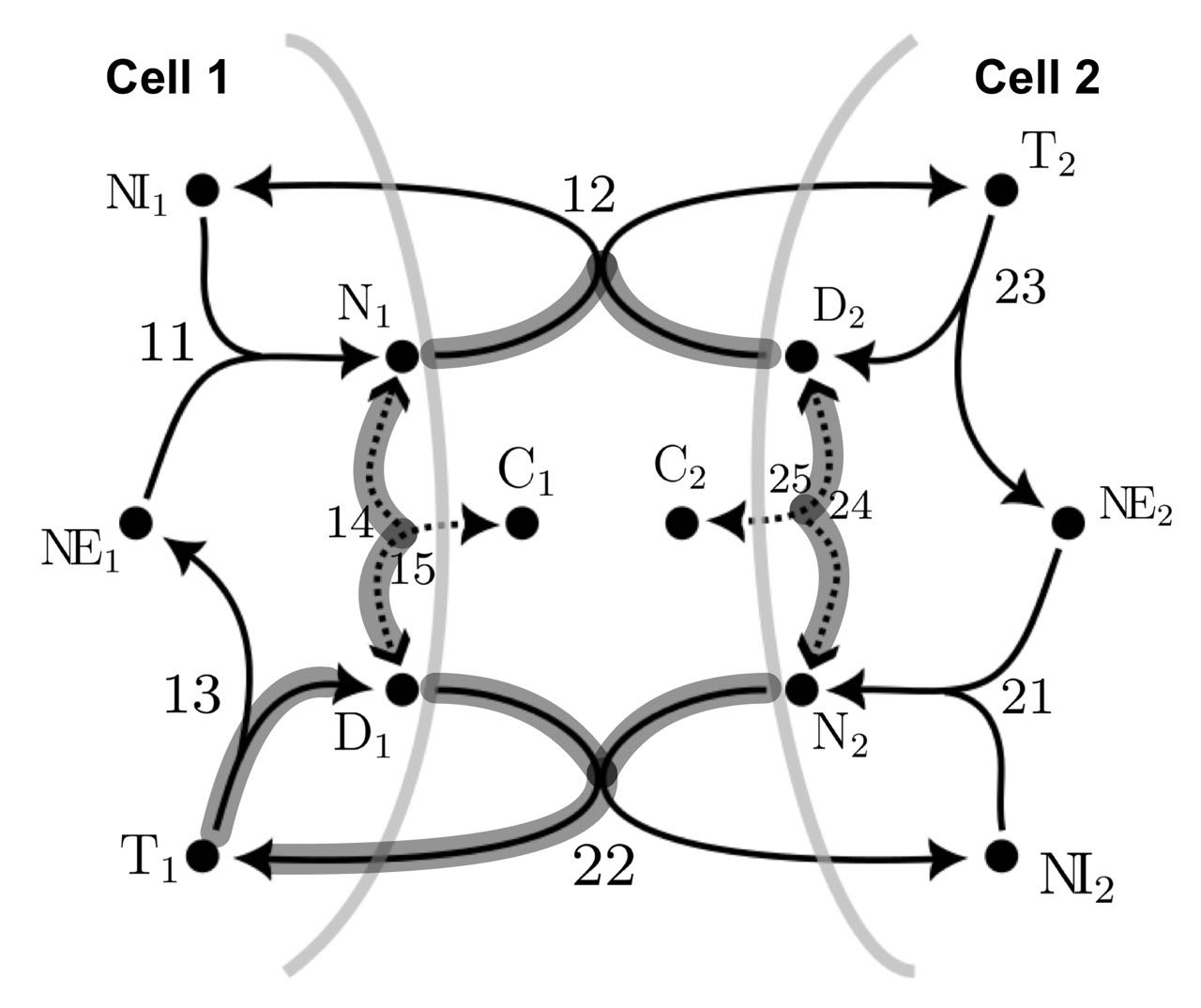} &
   \includegraphics[width=0.38\textwidth,height=0.38\textwidth,keepaspectratio]{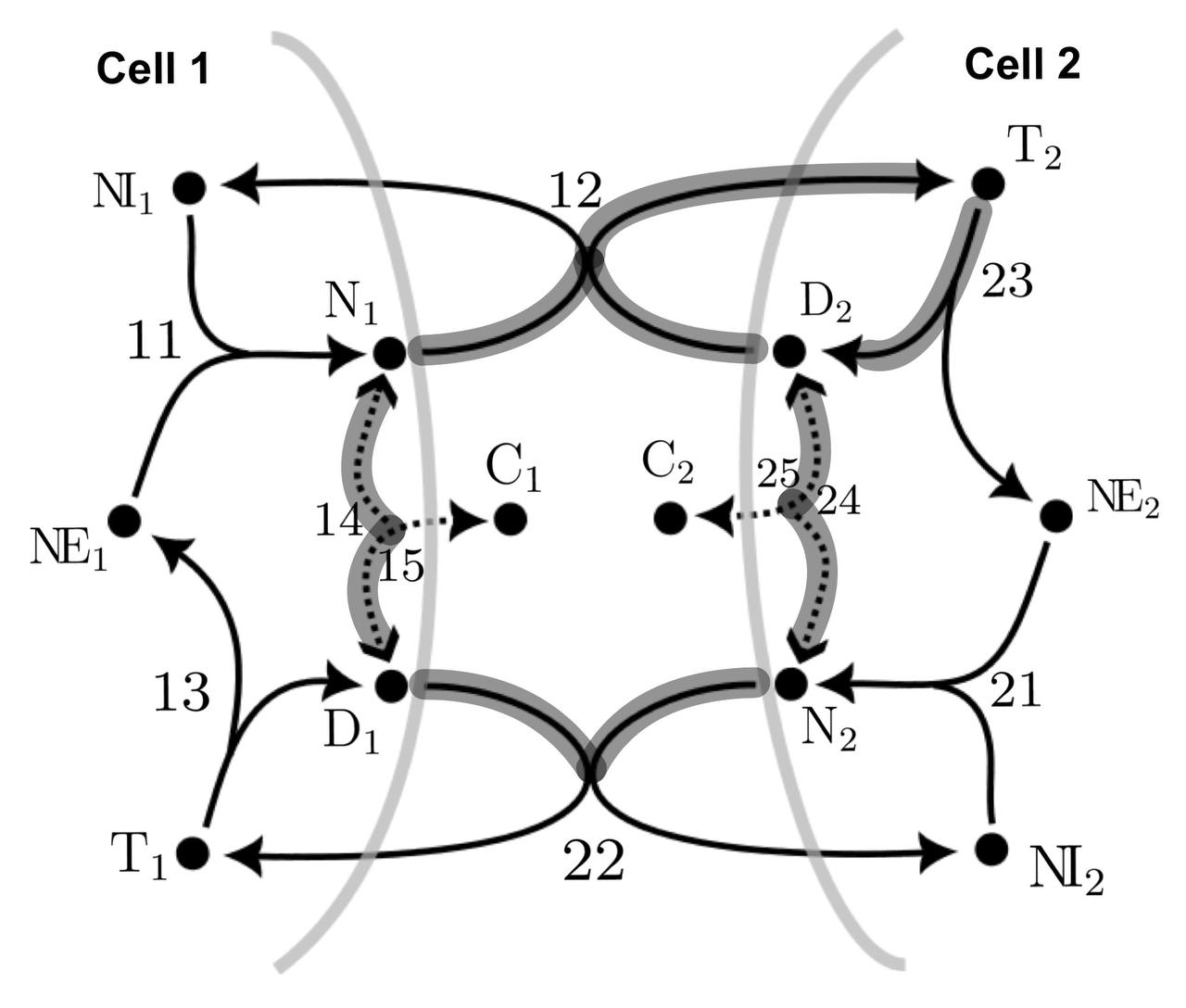}\\
  \hline
  \includegraphics[width=0.38\textwidth,height=0.38\textwidth,keepaspectratio]{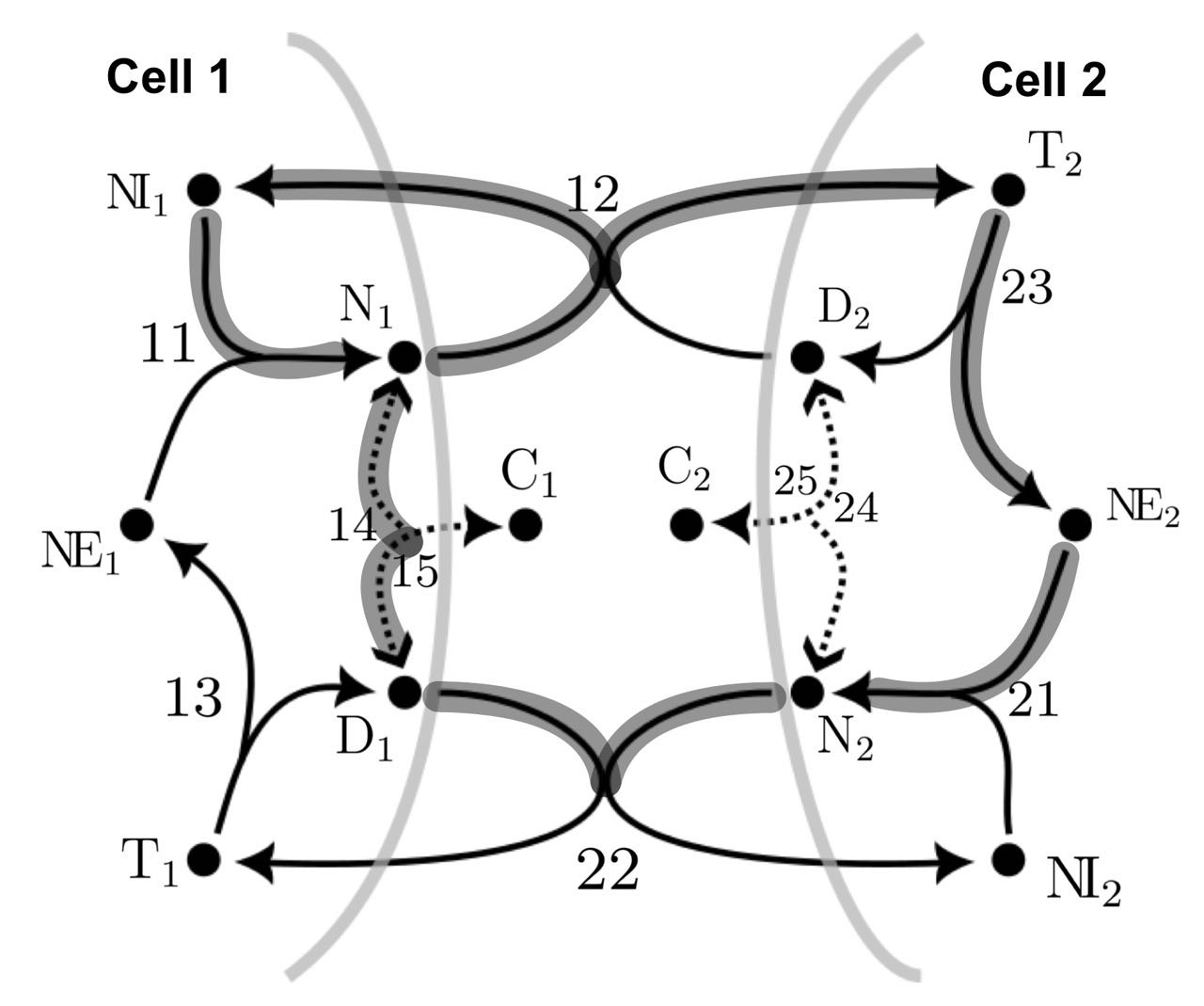} & \includegraphics[width=0.38\textwidth,height=0.38\textwidth,keepaspectratio]{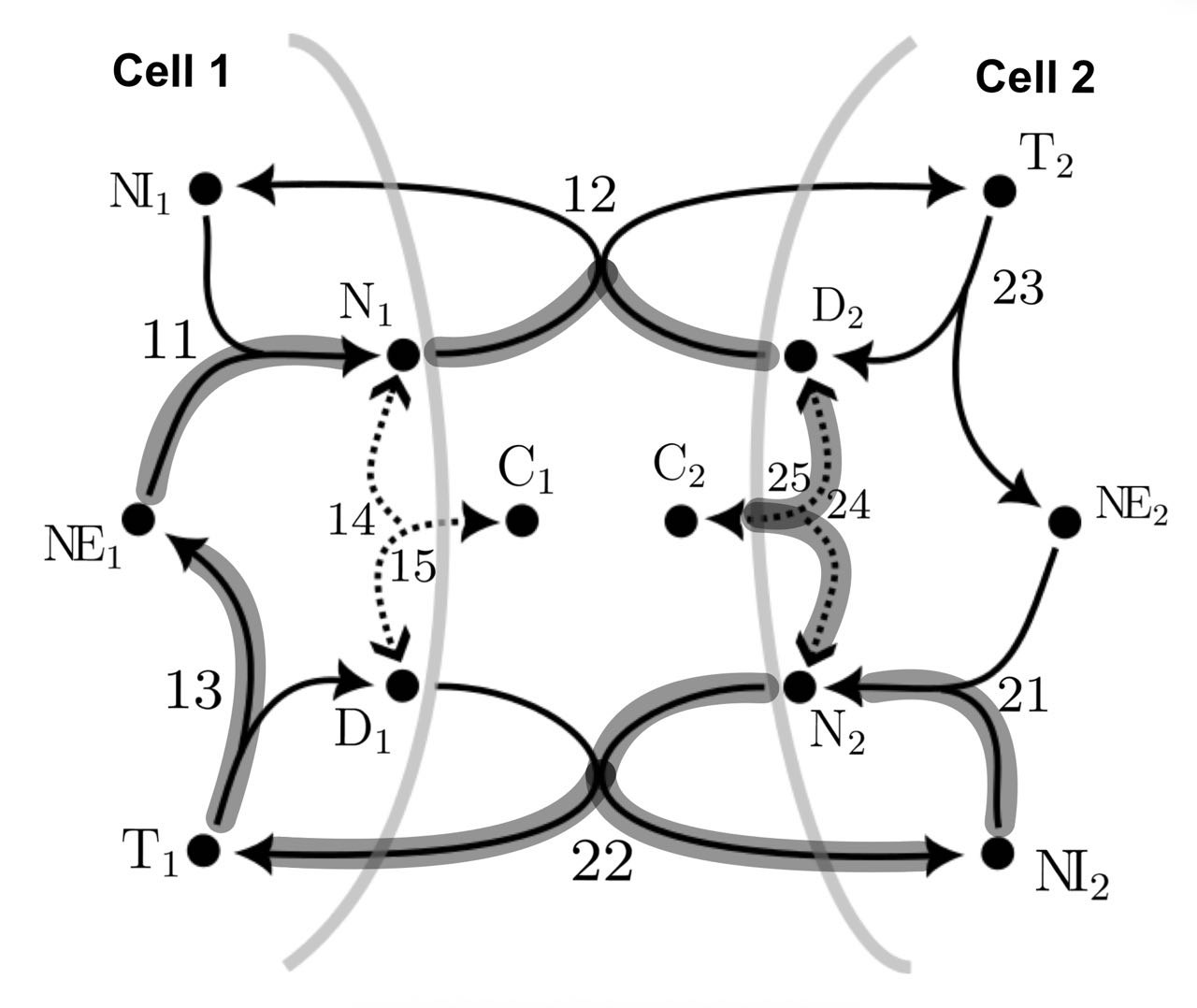}\\
  \hline
  \includegraphics[width=0.38\textwidth,height=0.38\textwidth,keepaspectratio]{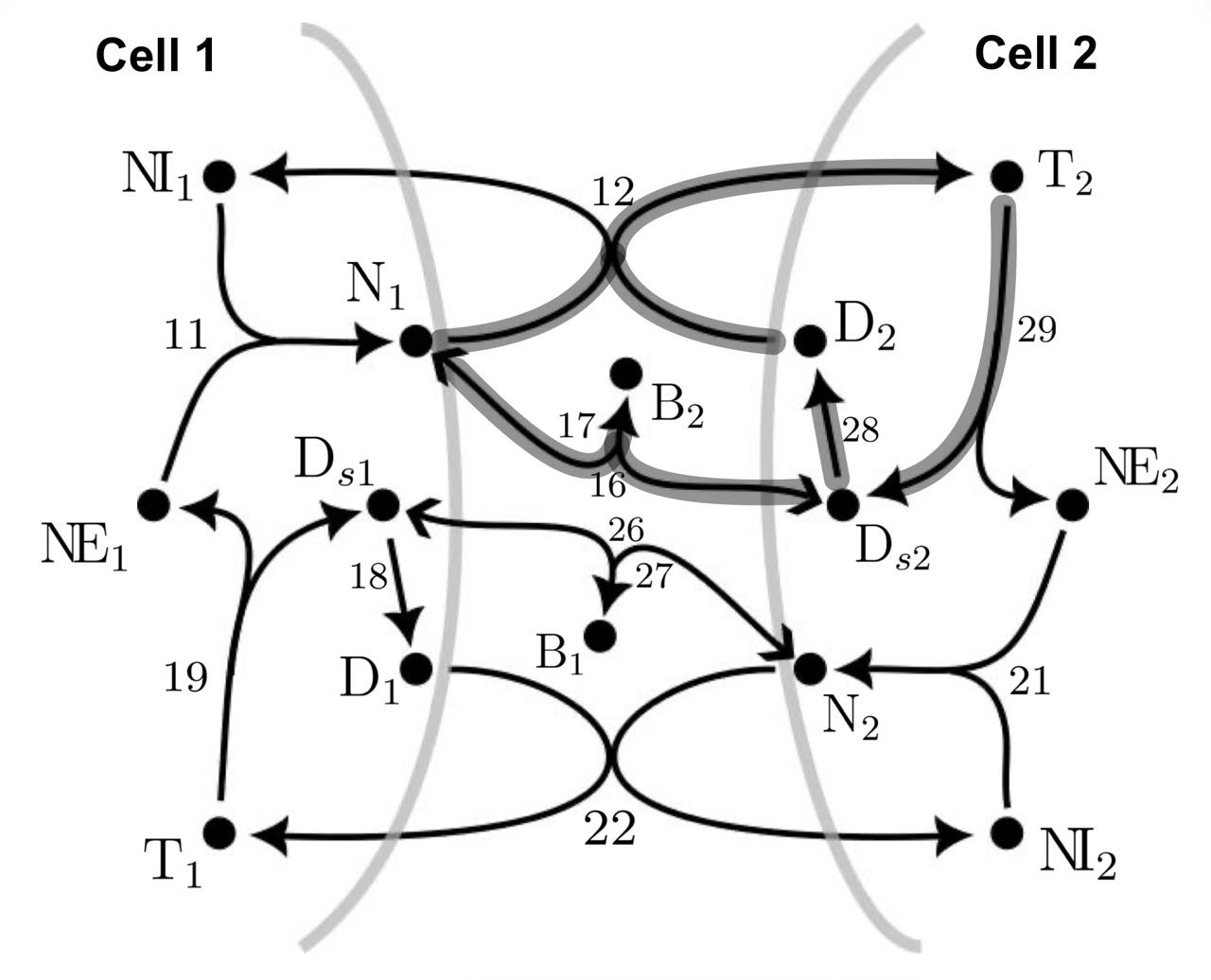} & \includegraphics[width=0.38\textwidth,height=0.38\textwidth,keepaspectratio]{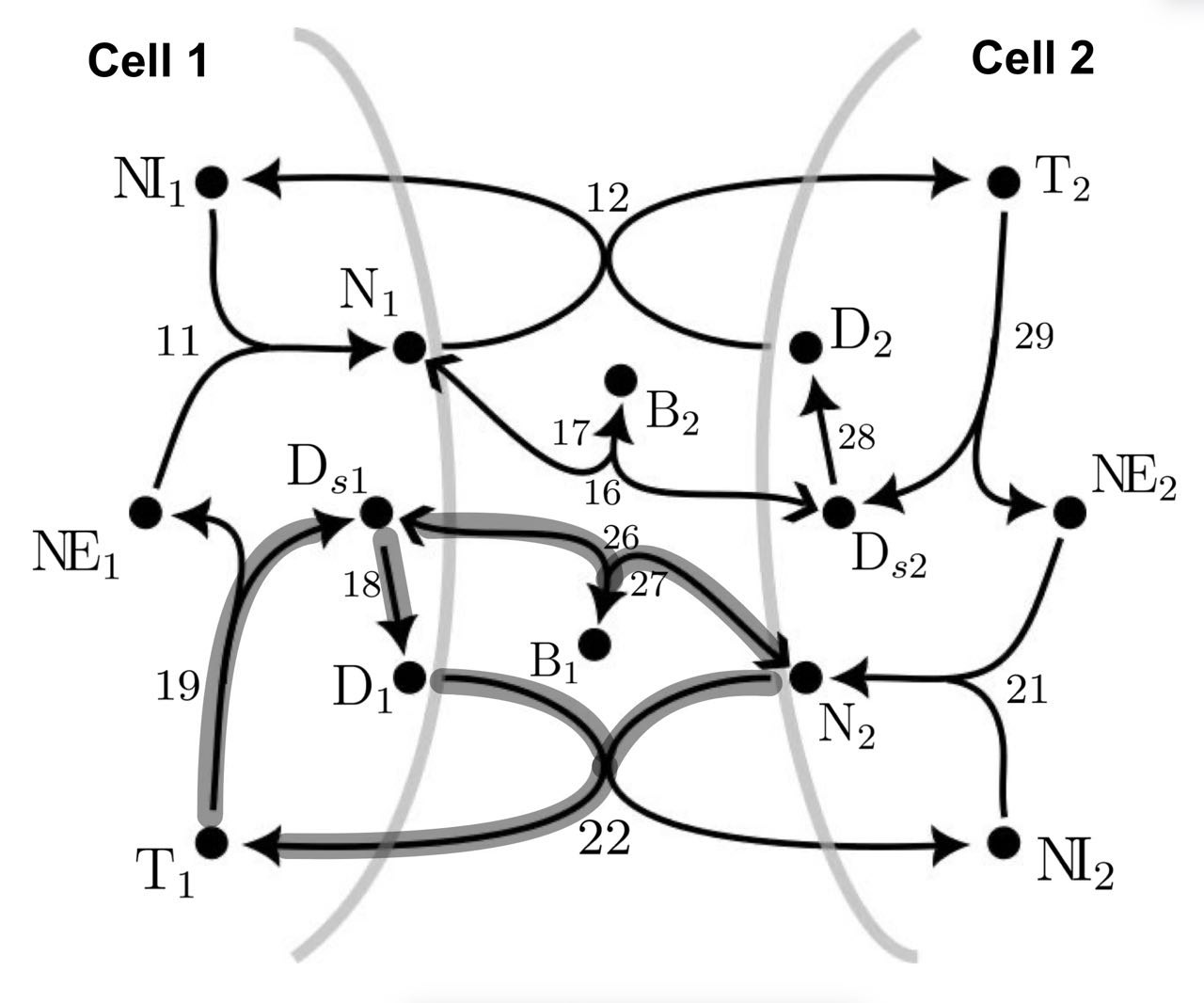} \\
  \hline
\end{tabular}
\caption{Instability motifs in the cis-model \ref{eq:modeli}+\ref{eq:modelii} (rows $1,2,3$; 
motifs \eqref{eq:motif12a}, \eqref{eq:motif12b}, and \eqref{eq:motif12c}) 
and in the ligand-activation model \ref{eq:modeliii} (row $4$; 
motif \eqref{eq:motif3}). The cis-reactions $14-15$, and $24-25$, 
are marked by 
dashed arrows. The left and right columns display the symmetric variants. 
The motifs in rows $1,2$ share the same topological structure, 
differing only by the use of {$\ce{N\!I_j}$} in the first and 
{$\ce{T_j}$} 
in the second
{motif}. All motifs contain an even number of reactant–reactant 
interactions 
involving one {Notch $\ce{N}$ species and one Delta $\ce{D}$ 
(or $\ce{D_s}$) 
species} - four in rows $1,2$, and two in rows $3,4$. 
Since these motifs do include reactant–reactant 
interactions,
they are classified as non-autocatalytic, as their associated CS-matrices are 
not Metzler (see proof in Sec.~\ref{sec:proofs}). {See also Step 3 on page \pageref{step:3}.}}
\label{fig:figure2}
\end{figure}
\paragraph{The central model: \ref{eq:modeli}.}
As customary in chemistry, we use the notation $[\ce{X}]$ to denote the concentration of a species $\ce{X}$. The ODEs 
modeling the time-evolution of the concentrations of the central model are:
\begin{equation}\label{eq:system11}
\begin{cases}
\dot{[\ce{N\!E}_1}]=-r_{11}([\ce{N\!I_1}], [\ce{N\!E_1}]) + r_{13}([\ce{T}_1])\\
\dot{[\ce{N\!I}_1}]=r_{12}([\ce{N_1}],[\ce{D}_{2}])-r_{11}([\ce{N\!I_1}],[\ce{N\!E_1}])\\
[\dot{\ce{N}}_1]=-r_{12}(\ce{[N_1]},\ce{[D_2]})+r_{11}(\ce{[N\!I_1]},\ce{[N\!E_1]})\\
[\dot{\ce{D}}_{1}]=r_{13}([\ce{T_1}])-r_{22}(\ce{[N_2]},\ce{[D_1]})\\
[\dot{\ce{T}}_1]=-r_{13}([\ce{T_1}])+r_{22}([\ce{N_2}],\ce{[D_1]})\\
[\dot{\ce{N\!E}}_2]=-r_{21}(\ce{[N\!I_2]},\ce{[N\!E_2]})+r_{23}(\ce{[T_2]})\\
[\dot{\ce{N\!I}}_2]=r_{22}(\ce{[N_2]},\ce{[D_1]})-r_{21}(\ce{[N\!I_2]},\ce{[N\!E_2]})\\
[\dot{\ce{N}}_2]=-r_{22}(\ce{[N_2]},\ce{[D_1]})+r_{21}(\ce{[N\!I_2]},\ce{[N\!E_2]})\\
[\dot{\ce{D}}_{2}]=-r_{12}(\ce{[N_1]},\ce{[D_2]})+r_{23}(\ce{[T_2]})\\
[\dot{\ce{T}}_2]=r_{12}(\ce{[N_1]},\ce{[D_2]})-r_{23}(\ce{[T_2]})\\
\end{cases}
\end{equation}
{\emph{Kinetic symmetry} requires 
\begin{equation}\label{eq:constraints1}
r_{1k}\equiv r_{2k} \quad \text{for all $k=1,2,3$.}
\end{equation}
However, our analysis of (\ref{eq:system11})} shows that this} system does not support the capacity for 
differentiation irrespective of enforcing the 
constraints \eqref{eq:constraints1} {or not}:
\begin{theorem}\label{thm:11}
The ODE system \eqref{eq:system11} admits only one single steady-state for 
all choices of monotone chemical functions $\mathbf{r}$. The 
associated network \emph{\ref{eq:modeli}} is non-autocatalytic. 
In particular, no unstable-positive feedback exists and \emph{\ref{eq:modeli}} has no capacity for differentiation.
\end{theorem}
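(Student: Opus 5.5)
The plan is to prove a sign-definiteness statement: \emph{every} Child-Selection matrix of \ref{eq:modeli} satisfies $\operatorname{sign}\det S[\pmb{\kappa}]\in\{0,(-1)^k\}$. By Lemma~\ref{lem:CSexpansion} this makes every coefficient $a_k$ sign-definite for all $R$. All three claims of Theorem~\ref{thm:11} should then follow, irrespective of the symmetry constraints \eqref{eq:constraints1}, since those only identify some entries of $R$.

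\textbf{Step 1: structure of the network.} From the steady-state equations for $\ce{N\!E_1},\ce{N\!I_1},\ce{D_1}$ one reads off $Sv=0 \Rightarrow v_{11}=v_{12}=v_{13}=v_{22}$, and symmetrically for the other reactions. Hence $\ker S=\operatorname{span}(1,\dots,1)$, the network is consistent, $\operatorname{rank}S=5$, and there are $n=5$ conservation laws. Examples are $[\ce{N\!I_j}]+[\ce{N_j}]$, $[\ce{D_j}]+[\ce{T_j}]$, plus one further total-Notch law. The relevant coefficient is therefore $a_5$, and I will check that $a_5\not\equiv0$ by exhibiting one nonzero $5$-CS determinant, so the network is nondegenerate.

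The reactant sets are
\[
11:\{\ce{N\!I_1},\ce{N\!E_1}\},\quad 12:\{\ce{N_1},\ce{D_2}\},\quad 13:\{\ce{T_1}\},
\]
and symmetrically for $21$–$23$. So a $k$-CS is determined by choosing, for each used reaction, one of its at most two reactant species. There are only $3^2\cdot 2$ or so choices per cell, which keeps a case enumeration feasible.

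\textbf{Step 2: sign of all CS-determinants (main obstacle).} I would show that every CS-matrix is, after a simultaneous permutation of rows and columns, block lower-triangular with diagonal blocks that are either $(-1)$ or of the form
\[
\begin{pmatrix}-1&1\\1&-1\end{pmatrix}\ \text{(determinant $0$)},
\]
or $2\times2$/cyclic blocks with determinant of sign $(-1)^{\text{size}}$.

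To organize this, I would draw the directed graph on the selected species, with an edge $\ce{X}_m\to\ce{X}_{m'}$ whenever $S[\pmb{\kappa}]_{m'm}\neq0$, and analyze its cycles. The determinant of a CS-matrix expands over cycle covers. So it suffices to show that every cycle has the stabilizing sign, i.e.\ the product of its off-diagonal entries times $(-1)^{\text{length}-1}$ yields a contribution of sign $(-1)^k$ or cancels. The positive cycles are those like $\ce{N\!I_1}\xrightarrow{12}$ and $\ce{N_1}\xrightarrow{11}$, i.e.\ mutual-production pairs $\begin{pmatrix}-1&1\\1&-1\end{pmatrix}$, which give determinant $0$.

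The dangerous candidates are long cycles through both cells, e.g.\ $\ce{D_2}\xrightarrow{12}\ce{T_2}\xrightarrow{23}\ce{N\!E_2}\xrightarrow{21}\ce{N_2}\xrightarrow{22}\ce{T_1}\xrightarrow{13}\ce{N\!E_1}\xrightarrow{11}\ce{N_1}$. Here the reactant-reactant interactions in $12,22$ contribute $-1$ off-diagonal entries only if both reactants are selected, and that is impossible since each reaction selects exactly one species. I expect this to show that off-diagonal entries of $S[\pmb{\kappa}]$ are all nonnegative except $-1$'s coupling co-reactants, which never lie in $\kappa$ simultaneously. The subtle point is entries such as $S_{\ce{D_2},12}$ seen from a column $J(\ce{N_1})=12$. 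I would check this carefully case by case; this enumeration is the main work.

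If the graph has only pure production chains that close up, one must verify that every closed cycle has net stoichiometric gain $1$, so that the corresponding cycle cover is singular rather than unstable. This should follow because each closed cycle is a sum of columns with a left-kernel conservation vector. Hence no $S[\pmb{\kappa}]$ satisfies \eqref{eq:posfeed}. In particular, no unstable-positive feedback exists, a fortiori no Metzler one, so \ref{eq:modeli} is non-autocatalytic. By Corollary~\ref{cor:twosignsexpansion}, $a_5\neq0$ for all $R$, so there is no capacity for differentiation.

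\textbf{Step 3: uniqueness.} For two positive steady states $x,y$ in the same compatibility class, the mean value theorem along the segment gives $0=f(x)-f(y)=S\tilde R(x-y)$. Here $\tilde R$ has the sign pattern of $R$, by properties 3–4 of Definition~\ref{Monotone chemical functions}. The reduced Jacobian with $\tilde R$ has determinant proportional to $a_5(\tilde R)\neq0$, and $x-y\in\operatorname{Im}S$, so $x=y$. This is the standard injectivity argument. If the paper's claim includes existence, I would add that the classes are compact (positive conservation laws cover all species), so a Brouwer-degree argument gives a steady state.
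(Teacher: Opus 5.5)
Your route is different from the paper's and can be made to work, but Step~2 as written does not yet prove the sign claim. Neither of the two reasons you give is the one that actually makes it true.

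First, ``each reaction selects exactly one species'' does not keep a co-reactant $\ce{Y}$ of $J(\ce{X}_l)$ out of $\kappa$. The entry $S[\pmb{\kappa}]_{\ce{Y},l}=-1$ appears as soon as $\ce{Y}\in\kappa$, whatever $J(\ce{Y})$ is. This is exactly what happens in \ref{eq:modeli}+\ref{eq:modelii}: there $\ce{D_2}$ contributes $-1$ in column $12$ of $S[\pmb{\kappa}_1]$ (proof of Thm.~\ref{thm:12}), with $J(\ce{D_2})=24$.

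Second, ``every closed cycle has gain $1$'' does not exclude instability. For $-I+N$ with $N\in\{0,1\}^{k\times k}$, two gain-one cycles through a common vertex already give $\det\begin{pmatrix}-1&1&1\\1&-1&0\\1&0&-1\end{pmatrix}=1=(-1)^{3-1}$, which is an unstable-positive feedback.

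The missing idea is the structural fact the paper also relies on: in \ref{eq:modeli}, every species is a reactant of exactly one reaction and a product of exactly one reaction.
\begin{itemize}
\item The first half forces $J(\ce{X}_m)$ to be the unique reaction consuming $\ce{X}_m$. So two co-reactants can never both lie in $\kappa$, and $S[\pmb{\kappa}]=-I+N$ with $N\ge 0$ and zero diagonal. No case enumeration is needed, including for your entry $S_{\ce{D_2},12}$.
\item The second half says every row of $N$ has at most one nonzero entry, and that entry equals $1$.
\end{itemize}
Row-wise Gershgorin then confines the spectrum of $S[\pmb{\kappa}]$ to the disc $|\mu+1|\le 1$. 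Hence no eigenvalue has positive real part, and $\operatorname{sign}\det S[\pmb{\kappa}]\in\{0,(-1)^k\}$. Equivalently, in your graph picture: in-degree at most one makes every strong component either a single vertex or a simple cycle, and each cycle block $-I+P$ is singular. With this in place, the rest of your argument is correct: the deductions via Lemma~\ref{lem:CSexpansion}, Corollary~\ref{cor:twosignsexpansion}, nondegeneracy, and the mean-value injectivity step.

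For comparison, the paper never inspects individual CS-matrices.
\begin{itemize}
\item It uses that the nonzero spectrum of $G=SR$ coincides with that of $RS$.
\item Under the same hypothesis ($\{0,1\}$ coefficients, each species in at most two reactions), $RS$ is weakly row diagonally dominant with negative diagonal.
\item Gershgorin then puts the spectrum of $G$ in the closed left half-plane for every $R$.
\item Monostationarity is imported from Banaji--Pantea.
\end{itemize}
Your repaired argument applies Gershgorin one level down, to each $S[\pmb{\kappa}]$. This gives the combinatorial parts of the theorem directly: no unstable CS-matrix, hence no unstable-positive feedback and non-autocatalyticity. The paper obtains these only implicitly, via Corollary~\ref{cor:csinst}. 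Your injectivity argument also replaces the external citation with a self-contained proof.

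What your route does not give is the paper's by-product that every steady state is linearly stable for all $R$, since sign-definite coefficients $a_k$ are weaker than the Hurwitz property. That is not part of the statement, so either route proves Theorem~\ref{thm:11}.
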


\paragraph{The cis-model: \ref{eq:modeli}+\ref{eq:modelii}.} To highlight 
the cis-interactions added to the central model~\ref{eq:modeli}, we mark 
the corresponding terms in bold. The ODE system reads:
\begin{equation}\label{eq:system12}
\begin{cases}
\dot{[\ce{N\!E}_1}]=-r_{11}([\ce{N\!I_1}], [\ce{N\!E_1}]) + r_{13}([\ce{T}_1])\\
\dot{[\ce{N\!I}_1}]=r_{12}([\ce{N_1}],[\ce{D}_{2}])-r_{11}([\ce{N\!I_1}],[\ce{N\!E_1}])\\
[\dot{\ce{N}}_1]=-r_{12}(\ce{[N_1]},\ce{[D_2]})+r_{11}(\ce{[N\!I_1]},\ce{[N\!E_1]})\boldsymbol{+r_{15}([\ce{C_1}])-r_{14}([\ce{N_1}],[\ce{D}_{1}])}\\
[\dot{\ce{D}}_{1}]=r_{13}([\ce{T_1}])-r_{22}(\ce{[N_2]},\ce{[D_1]})\boldsymbol{+r_{15}([\ce{C_1}])-r_{14}([\ce{N_1}],[\ce{D}_{1}])}\\
[\dot{\ce{T}}_1]=-r_{13}([\ce{T_1}])+r_{22}([\ce{N_2}],\ce{[D_1]})\\
\boldsymbol{[\dot{\ce{C}}_1]=-r_{15}([\ce{C_1}])+r_{14}([\ce{N_1}],[\ce{D}_{1}])}\\
[\dot{\ce{N\!E}}_2]=-r_{21}(\ce{[N\!I_2]},\ce{[N\!E_2]})+r_{23}(\ce{[T_2]})\\
[\dot{\ce{N\!I}}_2]=r_{22}(\ce{[N_2]},\ce{[D_1]})-r_{21}(\ce{[N\!I_2]},\ce{[N\!E_2]})\\
[\dot{\ce{N}}_2]=-r_{22}(\ce{[N_2]},\ce{[D_1]})+r_{21}(\ce{[N\!I_2]},\ce{[N\!E_2]})\boldsymbol{+r_{25}([\ce{C_2}])-r_{24}([\ce{N_2}],[\ce{D}_{2}])}\\
[\dot{\ce{D}}_{2}]=-r_{12}(\ce{[N_1]},\ce{[D_2]})+r_{23}(\ce{[T_2]})\boldsymbol{+r_{25}([\ce{C_2}])-r_{24}([\ce{N_2}],[\ce{D}_{2}])}\\
[\dot{\ce{T}}_2]=r_{12}(\ce{[N_1]},\ce{[D_2]})-r_{23}(\ce{[T_2]})\\
\boldsymbol{[\dot{\ce{C}}_2]=-r_{25}([\ce{C_2}])+r_{24}([\ce{N_2}],[\ce{D}_{2}])}
\end{cases}
\end{equation}
In this case, kinetic symmetry holds when the reaction rates satisfy:
\begin{equation}\label{eq:constraints2}
{r_{1k}\equiv r_{2k} \quad \text{for all $k=1,2,3,4,5.$}}
\end{equation}
%For this model, we have the following result:
\begin{theorem}\label{thm:12}
Consider the ODE system \eqref{eq:system12} with the constraints 
\eqref{eq:constraints2}. The system has the capacity for zero-eigenvalue 
bifurcations and thus for differentiation. Moreover, up to symmetry, there are exactly three {instability} motifs, {associated to unstable-positive feedback}, which generate the necessary instability. {These} are:
%\begin{enumerate}
%\item 
\begin{equation}\label{eq:motif12a}
\begin{cases}
\ce{N\!I_j} + \;... \quad&\overset{j1}{\ce{->}} \quad \ce{N_j} \\
\ce{N_j}+\ce{D_k} \quad &\overset{j2}{\ce{->}} \quad \ce{N\!I_j} \\  
\ce{N_j}+\ce{D_j} \quad &\overset{j4}{\ce{->}} \quad...\\
\ce{N_k}+\ce{D_j} \quad &\overset{k2}{\ce{->}} \quad ...\\
\ce{N_k}+\ce{D_k} \quad &\overset{k4}{\ce{->}} \quad...\\
\end{cases}\quad\quad\text{for $(j,k)=(1,2)\text{ and }(j,k)=(2,1),$}
\end{equation}
%\item 
\begin{equation}\label{eq:motif12b}
\begin{cases}
\ce{N_j}+\ce{D_k} \quad &\overset{j2}{\ce{->}} \quad ....\\
\ce{T_j} \quad &\overset{j3}{\ce{->}} \quad \ce{D_j}\\
\ce{N_j}+\ce{D_j} \quad &\overset{j4}{\ce{->}} \quad...\\
\ce{N_k}+\ce{D_j} \quad &\overset{k2}{\ce{->}} \quad \ce{T_j}\\
\ce{N_k}+\ce{D_k} \quad &\overset{k4}{\ce{->}} \quad...\\
\end{cases}\quad\quad\text{for $(j,k)=(1,2)\text{ and }(j,k)=(2,1),$}
\end{equation}
%\item 
\begin{equation}\label{eq:motif12c}
\begin{cases}
\ce{N\!I_j} + \;... \quad&\overset{j1}{\ce{->}} \quad \ce{N_j} \\
\ce{N_j} \quad &\overset{j2}{\ce{->}} \quad \ce{N\!I_j} + \ce{T_k}\\
\ce{N_j}+\ce{D_j} \quad &\overset{j4}{\ce{->}} \quad...\\
\ce{N\!E_k} + \;... \quad&\overset{k1}{\ce{->}} \quad \ce{N_k} \\
\ce{N_k}+\ce{D_j} \quad &\overset{k2}{\ce{->}} \quad ...\\
\ce{T_k} \quad &\overset{k3}{\ce{->}} \quad \ce{N\!E_k}\\
\end{cases}\quad\quad\text{for $(j,k)=(1,2)\text{ and }(j,k)=(2,1).$}
\end{equation}
%\end{enumerate}
{All three} motifs identify non-autocatalytic positive feedback and  the associated network \emph{\ref{eq:modeli}$+$\ref{eq:modelii}} is in particular non-autocatalytic.
\end{theorem}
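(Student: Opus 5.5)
We will follow the Workflow of Sec.~\ref{sec:preliminary}. \emph{First}, consistency and nondegeneracy. System \eqref{eq:system12} has $12$ species and $10$ reactions. A positive right kernel vector of $S$ is obtained by setting all fluxes of $11,12,13,21,22,23$ equal to $1$ and choosing $v_{j4}=v_{j5}$. Hence the network is consistent. Next we compute $\ker S^T$. The natural conservation laws are
\begin{itemize}
\item $[\ce{N\!I_1}]+[\ce{N_1}]+[\ce{C_1}]$, for intracellular Notch domains of cell 1;
\item $[\ce{N\!E_1}]+[\ce{N_1}]+[\ce{T_2}]+[\ce{C_1}]$, for extracellular domains;
\item $[\ce{D_1}]+[\ce{T_1}]+[\ce{C_1}]$, for ligands of cell 1;
\end{itemize}
together with their symmetric counterparts. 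This gives $n=6$, which we confirm by a rank computation $\operatorname{rank}S=6$. Nondegeneracy then means that $a_6\not\equiv0$. This follows by exhibiting a single $6$-CS with nonzero determinant, e.g. one with a triangular structure. Then $a_{\tilde k}=a_6$ is the determinant of the Jacobian restricted to a stoichiometric compatibility class.

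\emph{Second}, we impose kinetic symmetry. We set $R_{1k,\cdot}=R_{2k,\cdot}$ under the swap of indices $1\leftrightarrow2$, so that the symbolic entries reduce to $r_{1\ce{N\!I}},r_{1\ce{N\!E}},r_{2\ce{N}},r_{2\ce{D}},r_{3},r_{4\ce{N}},r_{4\ce{D}},r_{5}$. It is cleaner, however, to use the $\mathbb{Z}_2$-equivariance directly. In the basis of symmetric and antisymmetric combinations $x_1\pm x_2$, the Jacobian block-diagonalizes into $G_+\oplus G_-$, each of size $6\times6$ and with $3$ conservation laws each. Since $G_+$ is the Jacobian of the network quotiented by identifying the two cells, it should be sign-definite. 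The zero-eigenvalue bifurcation breaking symmetry must therefore come from $\det$ of $G_-$ restricted to its $3$-dimensional compatibility subspace. We will compute this antisymmetric reduced coefficient, a polynomial of degree $3$ in the symbolic entries. We then exhibit terms of opposite sign, e.g. a product containing $r_{2\ce{D}}r_{4\ce{N}}\cdots$ against $r_{2\ce{N}}r_{4\ce{D}}\cdots$, where the cross-cell reaction $j2$ enters with a reversed sign in the antisymmetric block. This yields a choice $R^*$ with $a_6=0$ by Corollary~\ref{cor:twosignsexpansion}, applied in its multilinear form to the symmetric-constrained polynomial. The symmetric constraint identifies variables, so we must check that opposite-sign monomials survive after identification and do not cancel. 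Exhibiting an explicit numerical $R^*$ with vanishing determinant settles this.

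\emph{Third}, the motifs. By Lemma~\ref{lem:CSexpansion}, the negative-signed contributions to $a_6$ come from CS-matrices $S[\pmb\kappa]$ with sign opposite to the generic one. For each such CS we extract a minimal principal sub-CS satisfying \eqref{eq:posfeed} with no smaller principal minor of sign $(-1)^{k'-1}$. Since the only new reactions relative to \ref{eq:modeli} are $j4,j5$, and Theorem~\ref{thm:11} excludes unstable-positive feedback without them, every such motif must use $14$ or $24$; reactions $j5$ have a single reactant $\ce{C_j}$ and act like degradation. We therefore enumerate CSs over $\{\ce{N_j},\ce{D_j},\ce{N\!I_j},\ce{T_j},\ce{N\!E_j}\}$ that contain an $\ce{N}$ or $\ce{D}$ species assigned to $j4$. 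We compute the determinants of the $5\times5$ and $6\times6$ candidates (using $\ce{N\!I}$ or $\ce{T}$ as the fifth species), check minimality, and thereby recover exactly \eqref{eq:motif12a}, \eqref{eq:motif12b} and \eqref{eq:motif12c}. Non-autocatalyticity follows by observing that each motif matrix has a $-1$ off-diagonal entry, coming from a reactant--reactant pair $\ce{N}+\ce{D}$ in reactions $j2$ or $j4$, so none is Metzler. For the global statement, we argue that any unstable-positive feedback must contain such a pair, since without it the CS is Metzler only if it lies in \ref{eq:modeli}, contradicting Theorem~\ref{thm:11}.

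The main obstacle is the \emph{exhaustive} enumeration proving that these three motifs are the only ones up to symmetry. We plan a case split on which $\ce{N}/\ce{D}$ species are assigned to $j4$, supplemented by a computer-algebra enumeration of all CS-matrices of size at most $6$.
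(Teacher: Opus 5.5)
\textbf{Your Step 2 fails, and the counting that precedes it is off.}

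First, the counting. $[\ce{N\!E_1}]+[\ce{N_1}]+[\ce{T_2}]+[\ce{C_1}]$ is not conserved: reaction 13 ($\ce{T_1 -> N\!E_1 + D_1}$) raises it by one and reaction 23 lowers it by one. Only its sum with the mirror image, the paper's $w_3$, is conserved. The positive kernel vectors are $v=(k,k,k,h,h,k,k,k,l,l)$, so $\dim\ker S=3$, $\operatorname{rank}S=7$ and $n=5$. The reduced determinant is therefore $a_7$, not $a_6$; $a_6$ has no such meaning. In the $\pm$ splitting, $G_+$ carries three conservation laws ($w_1+w_4$, $w_2+w_5$, $w_3$) and has reduced dimension $3$. $G_-$ carries only two ($w_1-w_4$, $w_2-w_5$) and has reduced dimension $4$, so its relevant coefficient has degree $4$, not $3$.

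More importantly, the sign structure is the reverse of what you assume. In the antisymmetric block, the column of reaction $j2$ has $+1$ on $\ce{D}$ and $-1$ on $\ce{T}$, and the reactivity entry becomes $-r_{2D}$. These two sign flips cancel. The sum of the $4\times4$ principal minors of $G_-$ is $2r_{1NE}r_{3T}(r_{2N}r_{4D}+r_{2D}r_{4N}+r_{2N}r_{5C})$, which is sign-definite. It is exactly the first factor of the paper's $a_7$.

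The mixed-sign cubic factor is the other one, containing $r_{4D}r_{2N}r_{3T}-r_{2D}r_{4N}r_{3T}$ and $r_{2D}r_{4N}r_{1NI}-r_{4D}r_{2N}r_{1NI}$. It is the reduced determinant of $G_+$, i.e. of the quotient network $\ce{N\!I + N\!E -> N}$, $\ce{N + D -> N\!I + T}$, $\ce{T -> N\!E + D}$, $\ce{N + D <=> C}$. That network is not sign-definite: its 3-CS on $\{\ce{N\!I},\ce{N},\ce{D}\}$ with reactions $1,2,4$ has determinant $+1=(-1)^{3-1}$. So the opposite-sign monomials you plan to exhibit in $G_-$ do not exist, and no $R^*$ makes that coefficient vanish.

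The paper never splits the Jacobian. It computes the full $a_7$ under the symmetry constraint and applies the intermediate value theorem to the mixed-sign factor. That suffices for the theorem in the paper's sense: a singular Jacobian at a homogeneous steady state under kinetic symmetry. To prove the statement you must likewise work with the full $a_7$, equivalently with $G_+$. Your refinement would in fact reveal that this zero eigenvalue lies in the symmetric subspace.

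Your motif step is essentially the paper's computer enumeration, with two caveats. It must range over minimal CS-matrices of all orders $k$, not over contributions to $a_6$. And your remark that $j5$ acts like degradation holds only when $j4$ is absent from the CS; with both present, their columns are negatives of each other and the determinant vanishes.
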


Remarkably, all three instability motifs in the cis model~\ref{eq:modeli}+\ref{eq:modelii} include either both central Notch--Delta interactions (reactions 12 and 22), as in \eqref{eq:motif12a} and \eqref{eq:motif12b}, or at least one of them, as in \eqref{eq:motif12c}. Similarly, each motif includes either both cis-reactions (reactions 14 and 24), as in \eqref{eq:motif12a} and \eqref{eq:motif12b}, or at least one of them, as in \eqref{eq:motif12c}. {Note, that the motifs} \eqref{eq:motif12a} and \eqref{eq:motif12b} share an identical network structure, meaning that, if labels are ignored, the two motifs are indistinguishable, as shown in Fig.~\ref{fig:figure2}, first and second row. More explicitly, disregarding the reaction labels, the simple relabeling of species
\[
(\ce{N\!I}_j, \ce{N}_j, \ce{D}_k, \ce{D}_j, \ce{N}_k) \longrightarrow (\ce{T}_j, \ce{D}_j, \ce{N}_k, \ce{N}_j, \ce{D}_k)
\]
transforms motif \eqref{eq:motif12a} into motif \eqref{eq:motif12b}: the role of $\ce{N\!I}_j$ in the former is taken over by $\ce{T}_j$ in the latter, with the roles of $\ce{D}$ and $\ce{N}$ also switched.
\begin{remark}
To handle the complexity of identifying all instability motifs, 
we relied on the Python module \emph{\textsc{BiRNe}} \emph{\cite{Golnik25}}, see Sec.~\ref{sec:proofs}. 
Incidentally, \emph{\textsc{BiRNe}} also identifies a class of subnetworks that give rise to oscillatory behavior 
in a parameter-rich kinetics setting, i.e., it identifies the existence of parameter choices for which periodic solutions exist. The underlying theoretical framework is based on global Hopf bifurcation \emph{\cite{blokOsci}}. 
Scanning \emph{\ref{eq:modeli}+\ref{eq:modelii}} via \emph{\textsc{BiRNe}} indeed certified the model's capacity for periodic oscillations. 
Since this is not our focus here, we just mention this aspect and leave a more comprehensive discussion 
for future investigations.
\end{remark}

\paragraph{The ligand activation hypothesis: \ref{eq:modeliii}.} 

The associated ODE system reads:

\begin{equation}\label{eq:system13}
\begin{cases}
[\dot{\ce{N\!E}}_1]=-r_{11}([\ce{N\!I_1]}, [\ce{N\!E_1}]) + r_{19}([\ce{T_1}])\\
[\dot{\ce{N\!I}}_1]=r_{12}([\ce{N_1}],[\ce{D}_{2}])-r_{11}([\ce{N\!I}_1],[\ce{N\!E_1}])\\
[\dot{\ce{N}}_1]=-r_{12}([\ce{N_1}],[\ce{D}_{2}])+r_{11}([\ce{N\!I_1}],[\ce{N\!E_1}])-r_{16}([\ce{D_{s2}}],[\ce{N_1}])+r_{17}([\ce{B_2}])\\
[\dot{\ce{D}}_{s1}]=r_{19}([\ce{T_1}])-r_{26}([\ce{D_{s1}}],[\ce{N_2}])+r_{27}([\ce{B_1}])-r_{18}([\ce{D_{s1}}])\\
[\dot{\ce{D}}_{1}]=r_{18}([\ce{D_{s1}}])-r_{22}([\ce{N_2}],[\ce{D}_{1}])\\
[\dot{\ce{T}}_1]=-r_{19}([\ce{T_1}])+r_{22}([\ce{N_2}],[\ce{D}_{1}])\\
[\dot{\ce{B}}_1]=r_{26}([\ce{D_{s1}}],[\ce{N_2}])-r_{27}([\ce{B_1}])\\

[\dot{\ce{N\!E}}_2]=-r_{21}([\ce{N\!I_2}],[\ce{N\!E_2}])+r_{29}([\ce{T_2}])\\
[\dot{\ce{N\!I}}_2]=r_{22}([\ce{N_2}],[\ce{D}_{1}])-r_{21}([\ce{N\!I_2}],[\ce{N\!E_2}])\\
[\dot{\ce{N}}_2]=-r_{26}([\ce{D_{s1}}],[\ce{N_2}])+r_{27}([\ce{B_1}])-r_{22}([\ce{N_2}],[\ce{D}_{1}])+r_{21}([\ce{N\!I_2}],[\ce{N\!E_2}])\\
[\dot{\ce{D}}_{s2}]=r_{29}([\ce{T_2}])-r_{16}([\ce{D_{s2}}],[\ce{N_1}])+r_{17}([\ce{B_2}])-r_{28}([\ce{D_{s2}}])\\
[\dot{\ce{D}}_{2}]=-r_{12}([\ce{N_1}],[\ce{D}_{2}])+r_{28}([\ce{D_{s2}}])\\
[\dot{\ce{T}}_2]=r_{12}([\ce{N_1}],[\ce{D}_{2}])-r_{29}([\ce{T_2}])\\
[\dot{\ce{B}}_2]=r_{16}([\ce{D_{s2}}],[\ce{N_1}])-r_{17}([\ce{B_2}])\\
\end{cases}
\end{equation}

In this case, kinetic symmetry holds when the reaction rates satisfy:
\begin{equation}\label{eq:constraints3}
r_{1k}\equiv r_{2k} \quad \text{for all $k=1,2,6,7,8,9$}
\end{equation}

For this model, we have the following result:
\begin{theorem}\label{thm:13}
Consider the ODE system \eqref{eq:system13} with the constraints \eqref{eq:constraints3}. The system has the capacity for zero-eigenvalue bifurcations and thus for differentiation. 
Moreover, up to symmetry, there is only one {instability} motif, 
{associated to unstable-positive feedback}, which generates the necessary instability:
\begin{equation}\label{eq:motif3}
    \begin{cases}
     \ce{N_j} + \ce{D_{k}} \quad &\overset{j2}{{\ce{->}}} \quad \ce{T_k} + \:...\\
     \ce{N_j} + \ce{D}_{sk} \quad &\overset{j6}{{\ce{->}}}\quad ...\\
 \ce{D}_{sk} \quad &\overset{k8}{{\ce{->}}} \quad \ce{D}_{k}\\
\ce{T_k} \quad &\overset{k3}{\ce{->}} \quad \ce{D}_{ks} + \;...\\
\end{cases}\quad\quad\text{for $(j,k)=(1,2)\text{ and }(j,k)=(2,1)$}
\end{equation}

The motif identifies non-autocatalytic positive feedback and the associated network \emph{\ref{eq:modeliii}} is in particular non-autocatalytic.
\end{theorem}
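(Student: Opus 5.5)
We follow the Workflow of Sec.~\ref{sec:preliminary} for the network \ref{eq:modeliii}, which has $14$ species and $18$ reactions.

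\textbf{Step 1: consistency and nondegeneracy.} We first check consistency by exhibiting a positive right kernel vector of $S$. Setting all fluxes of the cell-1 and cell-2 cycles $11\to12\to19\to18$ (and symmetrically) equal, and taking $v_{j6}=v_{j7}$, gives $v>0$ with $Sv=0$. Next we compute a basis of $\ker S^T$. We expect the conservation laws to be:
\begin{itemize}
\item total Notch intracellular domain in each cell, $[\ce{N\!I}_j]+[\ce{N}_j]$;
\item total extracellular domain, combining $\ce{N\!E}_j$, $\ce{N}_j$, $\ce{T}$ and $\ce{B}$;
\item total Delta, $[\ce{D}_1]+[\ce{D}_{s1}]+[\ce{T}_1]+[\ce{B}_1]+[\ce{D}_2]+\dots$, possibly split per cell.
\end{itemize}
Determining the exact value of $n=\dim\ker S^T$ fixes $\tilde k=14-n$. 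We then verify $a_{\tilde k}\not\equiv0$ by exhibiting a single $\tilde k$-CS with nonzero determinant. This shows the system is nondegenerate, so $a_{\tilde k}$ is the determinant of the Jacobian reduced to a stoichiometric compatibility class.

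\textbf{Step 2: symmetry-constrained expansion.} We impose \eqref{eq:constraints3} at a homogeneous steady state, identifying $R_{1k,\cdot}$ with the mirrored entries $R_{2k,\cdot}$. We then expand $a_{\tilde k}$ via Lemma~\ref{lem:CSexpansion}. After identification, distinct CS's may produce the same monomial, so Corollary~\ref{cor:twosignsexpansion} does not apply verbatim. The criterion becomes whether the collapsed polynomial in the reduced variables has monomials of both signs.

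This is the step we expect to be the main obstacle, because the CS enumeration is large (choose $\tilde k$ species and all compatible bijections). We intend to do it with \textsc{BiRNe} or a symbolic computation, grouping terms by monomial.

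To argue by hand, we would exhibit one monomial whose coefficient has sign opposite to $(-1)^{\tilde k}$. Such a monomial comes from a CS containing the motif \eqref{eq:motif3}, with species $\{\ce{N}_j,\ce{D}_{sk},\ce{D}_k,\ce{T}_k\}$ and reactions $j6,k8,j2,k9$. This motif would be completed by diagonal-dominant choices on the remaining species. We would then show that this monomial is not cancelled by its mirror image or by other CS's: either the symmetric counterpart contributes with the same sign, or no other CS yields the same product of reduced variables.

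\textbf{Step 3: the motif and non-autocatalysis.} We compute the $4\times4$ CS-matrix with species ordered $\ce{N}_j,\ce{D}_{sk},\ce{D}_k,\ce{T}_k$ and reactions $j6,k8,j2,k9$. Its columns give off-diagonal entries:
\begin{itemize}
\item $j6$: $-1$ on $\ce{D}_{sk}$;
\item $k8$: $+1$ on $\ce{D}_k$;
\item $j2$: $-1$ on $\ce{N}_j$ and $+1$ on $\ce{T}_k$;
\item $k9$: $+1$ on $\ce{D}_{sk}$.
\end{itemize}
We then check two things:
\begin{itemize}
\item $\operatorname{sign}\det=(-1)^{3}$, via the cycle $\ce{N}_j\to\ce{D}_{sk}\to\ce{D}_k\to\ce{T}_k\to\ce{D}_{sk}$ together with the reactant–reactant couplings;
\item every principal minor of order $k'$ has sign $(-1)^{k'}$ or $0$.
\end{itemize}
This establishes an unstable-positive feedback. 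It is not Metzler because of the $-1$ entries from the reactant–reactant pairs $\ce{N}_j$–$\ce{D}_{sk}$ and $\ce{N}_j$–$\ce{D}_k$.

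For uniqueness up to symmetry and for non-autocatalysis of the whole network, we enumerate all CS-matrices satisfying \eqref{eq:posfeed} with the minimality property, computationally as in Theorem~\ref{thm:12}. We then confirm two facts: every minimal one equals \eqref{eq:motif3} or its mirror, and none is Metzler. The Metzler property fails for any motif containing a bimolecular reaction consuming two species of $\kappa$, and we check that every minimal feedback contains such a reaction.
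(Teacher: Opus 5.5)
Your route is essentially the paper's: consistency, nondegeneracy ($n=5$, $\tilde k=9$), a \textsc{BiRNe} enumeration of the minimal CS-matrices with $\operatorname{sign}\det S[\pmb{\kappa}]=(-1)^{k-1}$ for uniqueness and non-Metzlerness, and the symbolic $a_9$ at kinetic symmetry. Your hand check of the motif is correct. It is the paper's $S[\pmb{\kappa}_2]$: $\det=-1$, all $2\times2$ principal minors equal $1$, and the $3\times3$ ones are $0$ or $-1$. You also correctly read $k9$ where the statement misprints $k3$. One small slip: $[\ce{N\!I}_j]+[\ce{N}_j]$ is not conserved, because $j6$ sequesters $\ce{N}_j$ in $\ce{B}_k$. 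The correct law is $[\ce{N\!I}_j]+[\ce{N}_j]+[\ce{B}_k]$, and the Delta laws are per cell, $[\ce{D}_{sj}]+[\ce{D}_j]+[\ce{T}_j]+[\ce{B}_j]$.

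The genuine gap is the inference in your Step 2. You rightly note that Corollary~\ref{cor:twosignsexpansion} no longer applies after imposing \eqref{eq:constraints3}, but your replacement criterion is not sufficient.

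\emph{Why it fails.} Each symbol occurs twice in $R_\sigma$, so $a_9$ is no longer multilinear (squares such as $r_{9T}^2$ and $r_{8Ds}^2$ occur). A non-multilinear polynomial can have monomials of both signs and still never vanish on the positive orthant: $x^2-xy+y^2>0$ for all $x,y>0$. The same objection applies to your hand argument. An uncancelled wrong-sign monomial helps only if it is a vertex of the Newton polytope of $a_9$, so that a scaling $r=t^{w}$, $t\to\infty$, makes it dominate. You neither state nor check this.

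\emph{The completion step.} Completing the motif ``by diagonal-dominant choices'' does not control the sign of the $9\times9$ determinant. It does so only if the completion is block-triangular with a complement of sign $(-1)^5$. Two examples with the motif $\{\ce{N}_1,\ce{D}_{s2},\ce{D}_2,\ce{T}_2\}$ show the difference:
\begin{itemize}
\item Completing by the mirror motif and $\ce{N\!E}_1\mapsto 11$ gives a \emph{stabilizing} term.
\item Completing by $\ce{N\!E}_1,\ce{T}_1,\ce{D}_{s1},\ce{N\!I}_2,\ce{B}_1\mapsto 11,19,18,21,27$ gives the destabilizing monomial $r_{9T}^2r_{8Ds}^2r_{1NE}r_{1NI}r_{2D}r_{6N}r_{7B}$.
\end{itemize}
That second monomial is in fact a Newton-polytope vertex of $a_9$, so your plan can be completed this way.

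\emph{How the paper closes the step.} The paper computes $a_9$ explicitly and reads off the factorization $a_9=2r_{9T}r_{1NE}r_{8Ds}\,(r_{7B}r_{2N}-r_{2D}r_{6N})\,Q$. Its middle factor visibly vanishes on $r_{7B}r_{2N}=r_{2D}r_{6N}$. Your proposal needs either this factorization or an explicit vertex/intermediate-value argument to establish the capacity for a zero eigenvalue under symmetry.
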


\begin{remark}\label{rmk:nucleus2} The unique instability motif \eqref{eq:motif3} of the ligand-activation model \emph{\ref{eq:modeliii}} includes the blocked reaction $j6$ (16 or 26 up to symmetry). These reactions are therefore necessary to introduce instability into the system. In contrast, the extension \emph{\ref{eq:modeli'}} of the central model \emph{\ref{eq:modeli}}, which explicitly models the translocation of the intracellular domain $\ce{N\!I_j}$ to the nucleus as $\ce{N\!I}^{\text{nucleus}}_j$, lacks this type of interaction and consequently has no capacity for instability and differentiation. We refer again to \hyperref[pt:translocation]{p.~\pageref*{pt:translocation}} for a rigorous mathematical argument.  
\end{remark}

\section{Minimal Mechanisms}\label{sec:minimal}

In the previous sections {we have set up and discussed 
mathematical models} related to biological information. Now we come from the 
other side, arguing purely mathematically, and try to answer
the following questions:

\emph{\centerline{
What are the essential minimal interactions required between two cells, 
}  
\centerline{
mathematically speaking, in order to allow for symmetry breaking as described 
above?
}
\centerline{What can we learn for biology from this?}}

From our previous analysis, it appears that the `central'
reaction involving the interaction of Notch $\ce{N}_j$ and Delta $\ce{D}_k, \; j\neq k$, and vice versa,  is still an essential part of the 
Notch pathway. This is an asymmetric interaction (Notch with Delta) with the additional splitting of Notch into two parts, the intracellular and the extracellular domain.

%Looking mathematically at this asymmetric reaction, however, 
%we do not even have to distinguish between the three different entities 
%involved. We can just work with one type for all three entities 
%while still retaining the asymmetry of the interaction.  
Now let's only look at this splitting of Notch in a mathematical minimal way, without focussing on other differences between Notch and Delta, i.e., consider two entities of cell 1 that interact with one entity of cell 2. Let this entity
be called $\Lambda$ and 
as before, let the subscripts $1, 2$ refer to cell 1 and cell 2, 
respectively. Then we have
\begin{equation}\label{minmodelsynthesis}\tag{\textbf{MI}}
  2\Lambda_1 + \Lambda_2 \quad \overset{\ce{1}}{\underset{\ce{2}}{\rightleftharpoons}} \quad \Lambda_1 + 2\Lambda_2 \; .
\end{equation}
Interestingly, this minimal model 
\ref{minmodelsynthesis} 
has the capacity for differentiation. Further, it is
\emph{autocatalytic}, in contrast to 
\ref{eq:modeli}, \ref{eq:modeli}+\ref{eq:modelii}, \ref{eq:modeliii}, 
which are non-autocatalytic. In fact, reaction $1$ in \ref{minmodelsynthesis} is autocatalytic in $\Lambda_2$, since one molecule of 
$\Lambda_2$ yields two molecules of $\Lambda_2$. 
The same holds for reaction $2$ and $\Lambda_1$. %The existence of such a simple mechanism for differentiation may point towards a potentially relevant evolutionary feature, which we will further address in Sec.~\ref{sec:autovsnonauto}. 
The ODEs associated to \ref{minmodelsynthesis} are:
\begin{equation}\label{eq:system1dfull}
\begin{cases}
    [\dot{\Lambda}_1] = -r_1([\Lambda_1],[\Lambda_2]) + r_2([\Lambda_2],[\Lambda_1])=-r([\Lambda_1],[\Lambda_2]) + r([\Lambda_2],[\Lambda_1])\\
    [\dot{\Lambda}_2] = \textcolor{white}{+} r_1([\Lambda_1],[\Lambda_2]) - r_2([\Lambda_2],[\Lambda_1])
=\textcolor{white}{+} r([\Lambda_1],[\Lambda_2]) - r([\Lambda_2],[\Lambda_1])\end{cases},
\end{equation}
at kinetic symmetry, i.e. $r_1(x,y) = r_2(x,y) =: r(x,y)$. 
%the ODE system reduces to
%\begin{equation}\label{eq:system1dfull}
%\begin{cases}
%    [\dot{\Lambda}_1] = \\
%    [\dot{\Lambda}_2] = r([\Lambda_1],[\Lambda_2]) - r([\Lambda_2],[\Lambda_1])
%\end{cases}
%\end{equation}
We have $[\dot{\Lambda}_1] + [\dot{\Lambda}_2] = 0$, and 
consequently the conservation law
$[\Lambda_1] + [\Lambda_2] = K.$
The same observation can also be obtained by considering the stoichiometric matrix 
\begin{equation}
    S = \begin{pmatrix}
        -1 & 1\\
        1 & -1
    \end{pmatrix} \; .
\end{equation}
Then $(1,1)$ is a left-kernel vector, corresponding to the conservation law. By fixing $K$ and writing 
$
 [\Lambda_2] = K - [\Lambda_1]$,
system \eqref{eq:system1dfull} reduces to one ODE:
\begin{equation}\label{eq:system1d1d}
[\dot{\Lambda}_1] = H(\Lambda_1) := -r([\Lambda_1], K - [\Lambda_1]) + r(K - [\Lambda_1], [\Lambda_1]).
\end{equation}
{Let} $\partial_1 r$ and $\partial_2 r$ {denote} 
the derivatives of $r$ w.r.t. {its} first and the second component, {then}
\begin{equation}\label{eq:derivminmodel}
\begin{split}
    H'([\Lambda_1])=&-\partial_1 r([\Lambda_1], K - [\Lambda_1])+\partial_2 r([\Lambda_1], K- [\Lambda_1]) \\&-\partial_1 r(K-[\Lambda_1], [\Lambda_1])+\partial_2 r(K-[\Lambda_1], [\Lambda_1])
\end{split} .
\end{equation}
Due to \eqref{eq:system1d1d} {and since Def.~\ref{Monotone chemical functions} of monotone chemical functions implies that $r(0,\Lambda_1)=r(\Lambda_1,0)=0$}, there are 
always at least three nontrivial steady-states:
\begin{equation}
E_{B1} := \{[\bar{\Lambda}_1] = 0\}; \quad
E_{B2} := \{[\bar{\Lambda}_1] = K\}; \quad
{E_k} := \{[\bar{\Lambda}_1] = K/2\}.
\end{equation}
The existence of connecting 
orbits between these steady-states, i.e., the existence of further 
steady-states, depends on the chosen {nonlinearity $r$}. 

We analyze {our ODE} \eqref{eq:system1d1d} with decreasing 
levels of generality. Since any orbit is bounded and 
we are looking at one ODE, if the homogeneous steady-state 
{$E_k$} is unstable, then any diverging orbit from 
{$E_k$} necessarily converges to another inhomogeneous 
steady-state $E_i$, with $E_i > {E_k}$ if the initial 
condition $[\Lambda_1](0) > E_k$, and $E_i < E_k$
otherwise. 
In particular, multistability with at least two inhomogeneous 
steady-states follows. Without further assumptions, the condition for 
linear instability of $E_k$ can be computed via 
\begin{equation}
   H'\bigg( \frac{K}{2}\bigg) = 2\bigg(  \partial_2 r \bigg( \frac{K}{2},\frac{K}{2}\bigg)-\partial_1 r\bigg( \frac{K}{2},\frac{K}{2}\bigg)\bigg),
\end{equation}
giving the following bifurcation condition for $H' = 0$ at $E_k$:
\begin{equation}\label{eq:min_bif}
    \partial_2 r \big|_{\Lambda_1=E_k} = \partial_1 r \big|_{\Lambda_1=E_k}
\end{equation}
and consequent instability ($H'>0$) whenever $\partial_2 r \big|_{\Lambda_1=E_k} > \partial_1 r \big|_{\Lambda_1=E_k}$. %The $\mathbb{Z}_2$-symmetry makes  $H$ antisymmetric around $E_h$, and thus $H'' \equiv 0$ at $E_h$. 
In turn, we study the (in)stability of the inhomogeneous steady-states 
$E_{B1}$ and $E_{B2}$ by assuming {that} 
\begin{equation}\label{eq:formofr}
    r(x,y) = {\left[f(x)\right]^2} g(y),
\end{equation}
where both $f$ and $g$ are monotone chemical functions. In particular, see Def.~\ref{Monotone chemical functions}, $f$ and $g$ are nonnegative monotone functions, $f(x)=0$ (resp. $g(y)=0$) if and only if $x=0$ (resp. $y=0$). 
This functional form naturally models a reaction of the form $2\ce{X} + \ce{Y}$, where the stoichiometric coefficient of {$\ce{X}$} 
appears as an exponent. This includes mass-action, Michaelis–Menten, and Hill kinetics. Under this assumption, the inhomogeneous boundary steady-states $E_{B1}$ and $E_{B2}$ are never linearly stable. $H'(0)=H'(K)$, independently of the choice of $r$. Then, assuming \eqref{eq:formofr}, we compute 
\begin{equation}
\begin{split}
H'(0)&=-2f(0)f'(0)g(K)+f(0)^2g'(K)-2f(K)f'(K)g(0)+f(K)^2g'(0)\\
&=f^2(K)g'(0)\ge 0. 
\end{split}
\end{equation}
In particular, if $g'(0)>0$, then  both $E_{B1}$ and $E_{B2}$ are always 
linearly unstable and orbits diverging from the homogeneous 
steady-state {$E_k$} must always 
converge to another steady-state $E_{i}\not\in \{E_{B_1},E_{B_2}\}$. 
Thus, the most generic geometrical picture consists of three 
steady-states 
when {$E_k$ is stable. If $E_k$} loses stability, it does so via a supercritical 
pitchfork bifurcation, producing two additional inhomogeneous steady-states.

Now {consider the}  
example:
\begin{equation}\label{eq:minnonlinearity}
r(x,y) = {\left[f(x)\right]^2} g(y)= \bigg(\frac{x}{1+\beta x}\bigg)^2 y,
\end{equation}
which suits our {purpose} and is 
{chemically consistent with the stoichiometry of reactants} 
$2\ce{X} + \ce{Y}$. Here $g(y)$ is of mass-action form, so $g'(0)=1>0$ and $f(x)$ is of Michaelis--Menten type. For $K=1$, {our ODE} 
%\eqref{eq:system1dfull} 
becomes
\begin{equation}\label{eq:explicitMI}
\dot{[\Lambda]}_1 = -  \bigg(\frac{[\Lambda]_1}{1 + \beta [\Lambda]_1}\bigg)^2(1 - [\Lambda]_1)+\bigg(\frac{1 - [\Lambda]_1}{1 + \beta(1 - [\Lambda]_1)}\bigg)^2[\Lambda]_1  ,
\end{equation}
whose non-boundary steady-states $E \notin \{E_{B1}, E_{B2}\}$ can be found analytically. Dividing the steady-state equation by $[\Lambda_1](1-[\Lambda_1])$, corresponding to the boundary steady-states, one obtains
\begin{equation}
{-} \bigg(1+\beta(1-[\Lambda_1])\bigg)^2[\Lambda_1]
{+ } \bigg(1+\beta[\Lambda_1]\bigg)^2(1-[\Lambda_1]) = 0,
\end{equation}
{which %, due to the presence of the steady-state $E_h=1/2$, 
factorizes} as
\begin{equation}\label{eq:pitchforknf}
(2[\Lambda_1]-1)\big(\beta^2[\Lambda_1]^2-\beta^2[\Lambda_1]+1\big):=\mathcal{P}([\Lambda]_1,\beta)=0.
\end{equation}
The ODE {$\dot{\Lambda}=\mathcal{P}(\Lambda,\beta)$} is 
essentially the normal form of a pitchfork bifurcation, see e.g. 
\cite{GuHoBook}. While the first factor always identifies the reference 
steady-state {$E_k$}, the second factor has roots
{
\begin{equation}
E_{i1},E_{i2}
%=\frac{\beta\pm\sqrt{\beta^2-4}}{2\beta}.
= \frac{1}{2} \pm \sqrt{\frac{1}{4} - \frac{1}{\beta^2}}.
\end{equation}
}

Therefore, as the bifurcation parameter $\beta\in\mathbb{R}_{\ge 0}$ varies, the following occurs (see Fig.~\ref{fig:MI} for an illustration):
\begin{enumerate}
    \item For $\beta\in[0,2)$, there are three steady-states. 
$E_{B1}$ and $E_{B2}$ are unstable, and 
{$E_k$} is stable. Any orbit is a heteroclinic connection 
from $E_{Bj}$ to {$E_k$}, $j=1,2$.
    \item At $\beta=2$, corresponding to \eqref{eq:min_bif}, a supercritical pitchfork bifurcation occurs and generates two additional inhomogeneous stable steady-states $E_{i1},E_{i2} \not\in \{E_{B_1},E_{B_2}\}$.
    \item For $\beta>2$, any nonstationary orbit converges either to $E_{i1}$ or to $E_{i2}$. When $\beta$ tends to $+\infty$, $E_{i1}$ and $E_{i2}$ converge to $E_{B1}$ and $E_{B2}$, respectively. 
\end{enumerate}
\begin{figure}
    \centering
    \includegraphics[width=0.40\linewidth]{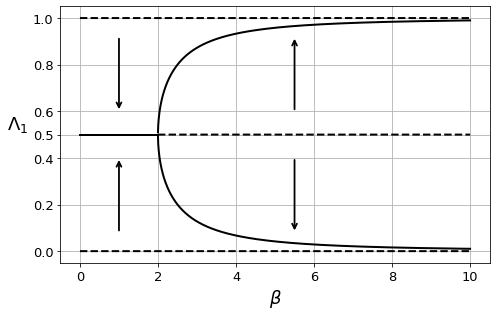}
    \caption{{The steady states of \eqref{eq:explicitMI} are plotted as a function of $\beta$ using Python. Continuous lines signify stable steady states, and dashed lines unstable ones. The boundary steady states $E_{B1}=0$ and $E_{B2}=1$ are unstable, irrespective of the value of $\beta$. In turn, at $\beta = 2$, the reference steady state $E_k=0.5$ loses stability and undergoes a supercritical pitchfork bifurcation. For $\beta>2$ bistability occurs.}}
    \label{fig:MI}
\end{figure}
{In summary, this minimal network has the capacity for differentiation.}

\subsection{Connecting back to biology}\label{q:2}

Why did evolution/biology not follow/invent this simple 
mathematical rule?\\ 
How do the two viewpoints still connect?

Consider the following basic Notch--Delta interaction:
\begin{equation}\label{eq:MII}\tag{\textbf{MII}}
\begin{split}
    \ce{N\!I_1 + N\!E_1 + D_2} \quad &\underset{1}{\longrightarrow} \quad \ce{N\!I_1 + N\!E_2 + D_2}\\
\ce{N\!I_2 + N\!E_2 + D_1} \quad&\underset{2}{\longrightarrow} 
\quad \ce{N\!I_2 + N\!E_1 + D_1} \; . 
\end{split}
\end{equation}
This network is not able to produce differentiation 
(see Prop.~\ref{prop:min245} for mathematical details). So, which 
essence is lost here, when both - the {mathematically minimal} 
network \ref{minmodelsynthesis} which allows for differentiation,  
and this basic biological 
one \ref{eq:MII} which does not - are so close?

In the literature it is discussed that the extracellular domain of 
Notch $\ce{N\!E}$ and Delta $\ce{D}$ might have had a close ancestor, 
let's call it {$\Lambda$} again. This translates \ref{eq:MII} into 
\begin{equation}\tag{\textbf{MIII}}\label{eq:Miii}
\begin{split}
    \ce{N\!I_1 + \Lambda_1 + \Lambda_2}\quad &\underset{1}
{\longrightarrow}\quad \ce{N\!I_1 + 2 \Lambda_2}\\
\ce{N\!I_2 + \Lambda_2 + \Lambda_1} \quad&\underset{2}
{\longrightarrow}\quad \ce{N\!I_2 +2\Lambda_1}
\end{split}
\end{equation}

Nicely enough, this network has the capacity for differentiation (see Prop.~\ref{prop:3}). {Similarly} to \ref{minmodelsynthesis} also \ref{eq:Miii} is 
\emph{autocatalytic}, {since} in its reaction 1, 
$\Lambda_2$ catalyses its own production, and vice versa $\Lambda_1$ 
in reaction 2. \begin{remark}\label{rmk:miiib}
If we neglect the participation of $\ce{N\!I}$ in \emph{\ref{eq:Miii}}, we obtain the reduced network
\begin{equation}\tag{\textbf{MIIIb}}\label{eq:Miiib}
\begin{split}
\ce{ \Lambda_1 + \Lambda_2}\quad &\underset{1}
{\longrightarrow}\quad \ce{2 \Lambda_2}\\
\ce{\Lambda_2 + \Lambda_1} \quad&\underset{2}
{\longrightarrow}\quad \ce{2\Lambda_1}.
\end{split}
\end{equation}
Since the stoichiometric matrix $S$  as well as the reactivity matrix $R$ are identical in \emph{\ref{eq:Miiib}} and \emph{\ref{minmodelsynthesis}}, the associated ODEs \eqref{eq:system1dfull} are identical as well. For differentiation to occur through \eqref{eq:min_bif}, a certain asymmetry is needed. This asymmetry can be encoded structurally in the stoichiometry of the reactants, as in \emph{\ref{minmodelsynthesis}}, or less generally only at the level of the chosen nonlinearity, as here in \emph{\ref{eq:Miiib}}. 
\end{remark}

For a full mathematical classification, we now also analyze the 
other situations, namely $\ce{N\!I}$ being close to 
{$\ce{N\!E}$, i.e.
\begin{equation}\tag{\textbf{MIV}}\label{eq:miv}
 \begin{split}
    2{\Lambda}_1 + \ce{D}_2\quad &\underset{1}{\longrightarrow} 
\quad{\Lambda}_1 + {\Lambda}_2 + \ce{D}_2\\
2{\Lambda}_2  + \ce{D_1} \quad &\underset{2}{\longrightarrow} 
\quad {\Lambda}_2 + {\Lambda}_1 + \ce{D}_1
\end{split}
\end{equation}
and $\ce{N\!I}$ being close to $\ce{D}$, i.e. 
\begin{equation}\tag{\textbf{MV}}\label{eq:Mv}
\begin{split}
    \ce{\Lambda_1 + N\!E_1 + \Lambda_2} \quad 
&\underset{1}{\longrightarrow} \quad \ce{\Lambda_1 + N\!E_2 + \Lambda_2}\\
\ce{\Lambda_2 + N\!E_2 + \Lambda_1} \quad&\underset{2}{\longrightarrow} 
\quad \ce{\Lambda_2 + N\!E_1 + \Lambda_1}
\end{split}
\end{equation}
}

Both \ref{eq:miv} and \ref{eq:Mv} {do not allow for} 
differentiation (see again Prop.~\ref{prop:min245} for details). 
\\
{ 
Below we summarize the capacity for differentiation (\textbf{Diff.})
in the five minimal models.} 
{
\renewcommand{\arraystretch}{1.3}
\begin{tabular}{||c|c|c|c||}
\hline
\textbf{Model} & \textbf{Specialization} & \textbf{Reaction network} & \textbf{Diff.}\\
\hline\hline
\hline
\textbf{MI} & $(\ce{N\!I}=\ce{N\!E}=\ce{D}):=\Lambda$ &  $2\Lambda_1 + \Lambda_2 \quad \underset{2}{\overset{1}{\rightleftharpoons}} \quad \Lambda_1 + 2\Lambda_2$
& $\checkmark$\\
\hline
\textbf{MII} & $(\ce{N\!I} \neq \ce{N\!E} \neq \ce{D})$  & {\centering $\begin{aligned}
    \ce{N\!I_1 + N\!E_1 + D_2} \quad &\underset{1}{\longrightarrow} \quad \ce{N\!I_1 + N\!E_2 + D_2}\\
\ce{N\!I_2 + N\!E_2 + D_1} \quad&\underset{2}{\longrightarrow} \quad \ce{N\!I_2 + N\!E_1 + D_1}
\end{aligned}$}  &  $\pmb{\times}$\\
\hline
\textbf{MIII} & $(\ce{N\!E}=\ce{D}):=\Lambda\neq \ce{N\!I}$ & {\centering $\begin{aligned}
    \ce{N\!I_1 + \Lambda_1 + \Lambda_2}\quad &\underset{1}
{\longrightarrow}\quad \ce{N\!I_1 + 2 \Lambda_2}\\
\ce{N\!I_2 + \Lambda_2 + \Lambda_1} \quad&\underset{2}
{\longrightarrow}\quad \ce{N\!I_2 +2\Lambda_1}
\end{aligned}$} & $\checkmark$ \\
\hline
\textbf{MIV} & $(\ce{N\!I}=\ce{N\!E}):=\Lambda\neq \ce{D}$ & 
{\centering $\begin{aligned}
    2\Lambda_1 + \ce{D}_2\quad &\underset{1}{\longrightarrow} 
\quad\Lambda_1 + \Lambda_2 + \ce{D}_2\\
2\Lambda_2  + \ce{D_1} \quad &\underset{2}{\longrightarrow} \quad 
\Lambda_2 + \Lambda_1 + \ce{D}_1
\end{aligned}$} &  $\pmb{\times}$ \\
\hline
\textbf{MV} & $(\ce{N\!I}=\ce{D}):=\Lambda\neq \ce{N\!E}$ & 
{\centering $\begin{aligned}
    \ce{\Lambda_1 + N\!E_1 + \Lambda_2} \quad &\underset{1}
{\longrightarrow} \quad \ce{\Lambda_1 + N\!E_2 + \Lambda_2}\\
\ce{\Lambda_2 + N\!E_2 + \Lambda_1} \quad&\underset{2}{\longrightarrow} 
\quad \ce{\Lambda_2 + N\!E_1 + \Lambda_1}
\end{aligned}$}  & $\pmb{\times}$ \\
\hline\hline
\end{tabular}\\

We  already know 
the capacity for differentiation for \ref{minmodelsynthesis}. The
next two propositions cover the other four models. 
Interestingly \ref{eq:Miii} formally mirrors \ref{eq:miv}, with inverse orientation, so that the autocatalytic self-amplification process in \ref{eq:Miii}, which has the capacity for differentiation, results in a non-autocatalytic `self-attenuation' process in \ref{eq:miv}, which does not.  Intuitively, this is analogous to the idea that an iterated process such as $\ce{X} \longrightarrow 2\ce{X}$ leads to self-amplification of $\ce{X}$, whereas  $2\ce{X \longrightarrow X}$ leads to self-attenuation.

\begin{prop}\label{prop:3}
Consider the ODEs associated to \emph{\ref{eq:Miii}},
\begin{equation}
    \begin{cases}
    
    [\dot{\Lambda}_1]=-r_1([\ce{N\!I}_1],[\Lambda_1],[\Lambda_2])+r_2([\ce{N\!I}_1],[\Lambda_2],[\Lambda_1])\\
    [\dot{\Lambda}_2]=r_1([\ce{N\!I}_1],[\Lambda_1],[\Lambda_2])-r_2([\ce{N\!I}_1],[\Lambda_2],[\Lambda_1])\\
[\dot{\ce{N\!I}_1}]=[\dot{\ce{N\!I}_2}]=0\\ 
    \end{cases},
\end{equation}
with the kinetic symmetry constraint $r_1\equiv r_2$. The system has the capacity for zero-eigenvalue bifurcations and thus for differentiation. 
\end{prop}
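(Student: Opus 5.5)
We follow the workflow of Sec.~\ref{sec:preliminary} on the four-species network \ref{eq:Miii} with species ordered as $(\Lambda_1,\Lambda_2,\ce{N\!I}_1,\ce{N\!I}_2)$ and reactions $1,2$. Its stoichiometric matrix has rows $(-1,1)$, $(1,-1)$, $(0,0)$, $(0,0)$. Thus $v=(1,1)$ is a positive right kernel vector, so the network is consistent. The left kernel is spanned by $(1,1,0,0)$, $(0,0,1,0)$ and $(0,0,0,1)$, so $n=3$ and the relevant coefficient is $a_{\tilde k}$ with $\tilde k=|\mathbf{M}|-n=1$, i.e. the trace of $G=SR$.

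The symbolic reactivity matrix has row $1$ equal to $(r_{1\Lambda_1},r_{1\Lambda_2},r_{1\ce{N\!I}_1},0)$ and row $2$ equal to $(r_{2\Lambda_1},r_{2\Lambda_2},0,r_{2\ce{N\!I}_2})$. By Lemma~\ref{lem:CSexpansion}, $a_1$ is the sum of the $1\times1$ CS-matrices weighted by the corresponding $R$-entries. The only contributions come from $\Lambda_1$ and $\Lambda_2$, since the $\ce{N\!I}_j$ are catalysts with zero rows in $S$. The available CS-matrices are:
\begin{itemize}
\item $\Lambda_1\mapsto 1$ with entry $-1$, and $\Lambda_1\mapsto 2$ with entry $+1$, because $\Lambda_1$ is a reactant of reaction $2$ and gains net $+1$ there;
\item symmetrically, $\Lambda_2\mapsto 2$ with entry $-1$, and $\Lambda_2\mapsto 1$ with entry $+1$.
\end{itemize}
Hence
\[
a_1=-r_{1\Lambda_1}+r_{2\Lambda_1}+r_{1\Lambda_2}-r_{2\Lambda_2}.
\]
Nondegeneracy follows since $a_1\not\equiv0$ while $a_k\equiv0$ for $k\ge2$, because $\operatorname{rank}S=1$. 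By Corollary~\ref{cor:twosignsexpansion}, the two $1\times1$ CS-matrices $(-1)$ and $(+1)$ already give the sign change. The matrix $(+1)$ is an unstable-positive feedback with sign $(-1)^0$, and it is Metzler, hence autocatalytic.

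The main obstacle is to check that the sign change survives kinetic symmetry. We impose $r_1\equiv r_2=:r(\nu,x,y)$, where reaction $1$ is evaluated at $(\ce{N\!I}_1,\Lambda_1,\Lambda_2)$ and reaction $2$ at $(\ce{N\!I}_2,\Lambda_2,\Lambda_1)$. At a homogeneous state with $[\Lambda_1]=[\Lambda_2]$ and $[\ce{N\!I}_1]=[\ce{N\!I}_2]$, the symmetry gives $r_{1\Lambda_1}=r_{2\Lambda_2}=\partial_x r=:\alpha$ and $r_{1\Lambda_2}=r_{2\Lambda_1}=\partial_y r=:\gamma$. Therefore
\[
a_1=2(\gamma-\alpha),
\]
which is the analogue of \eqref{eq:min_bif}.

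Parameter-richness (Definition~\ref{def:prich}) lets $\alpha,\gamma>0$ be chosen independently while keeping the homogeneous steady state. The steady-state condition $r_1=r_2$ holds automatically at the symmetric point. Taking $\alpha=\gamma$ makes $a_1=0$, and moving $\gamma$ across $\alpha$ changes its sign. This yields a zero-eigenvalue crossing at the homogeneous steady state within each compatibility class, which establishes the capacity for differentiation.

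A concrete nonlinearity can confirm this. For example, take $r(\nu,x,y)=\nu\, x\, y/(1+\beta y)$ or a similar form, so that the bifurcation parameter $\beta$ moves $\partial_y r-\partial_x r$ through zero, in the same way as the worked example \eqref{eq:minnonlinearity}.
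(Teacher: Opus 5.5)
Your core argument is correct and is essentially the paper's. With $\operatorname{rank}S=1$, the only nonzero eigenvalue is the trace $-r_{1\Lambda_1}+r_{2\Lambda_1}+r_{1\Lambda_2}-r_{2\Lambda_2}$. Kinetic symmetry at the homogeneous state turns it into $2(\partial_y r-\partial_x r)$, which vanishes at $\partial_y r=\partial_x r$ (the paper's $g_1=g_2$). Your CS-expansion bookkeeping, nondegeneracy check and autocatalysis remark are consistent extras.

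The closing concrete example is wrong, though. For $r(\nu,x,y)=\nu xy/(1+\beta y)$ at $x=y=\lambda$,
\[
\partial_y r-\partial_x r=-\frac{\nu\beta\lambda^2}{(1+\beta\lambda)^2}<0\qquad\text{for every }\beta>0,
\]
so the homogeneous state is always stable and $\beta$ never moves $a_1$ through zero. The quantity only reaches $0$ at $\beta=0$, where $r$ is symmetric in $(x,y)$ and $\dot{[\Lambda_1]}\equiv0$. That is a degenerate continuum of equilibria, not a bifurcation.

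Saturating the converted species instead, $\nu xy/(1+\beta x)$, gives the unstable sign for all $\beta>0$, but again there is no crossing at positive $\beta$. Unlike \ref{minmodelsynthesis}, the network \ref{eq:Miii} has no stoichiometric factor $2$ supplying an offset for the saturation to overcome (cf.\ Remark~\ref{rmk:miiib}).

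A faithful analogue of \eqref{eq:minnonlinearity} is that nonlinearity itself, $r(\nu,x,y)=\nu\,(x/(1+\beta x))^2y$. It is an admissible monotone chemical function for \ref{eq:Miii}, since Definition~\ref{Monotone chemical functions} does not tie exponents to stoichiometry. It reproduces \eqref{eq:explicitMI} up to the factor $\nu$, hence the pitchfork at $\beta=2$ for $K=1$.

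Since the example is not load-bearing, your proof stands once it is dropped or replaced.
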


\begin{prop}\label{prop:min245}
Consider the ODEs associated to \emph{\ref{eq:MII}}, 
\begin{equation}\label{eq:odemodelmin2}
\begin{cases}
[\dot{\ce{N\!E}_1}]=-r_1([\ce{N\!I}_1],[\ce{N\!E}_1],[\ce{D}_2])+r_2([\ce{N\!I}_2],[\ce{N\!E}_2],[\ce{D}_1])\\
[\dot{\ce{N\!E}_2}]=r_1([\ce{N\!I}_1],[\ce{N\!E}_1],[\ce{D}_2])-r_2([\ce{N\!I}_2],[\ce{N\!E}_2],[\ce{D}_1])\\
[\dot{\ce{N\!I}_1}]=[\dot{\ce{N\!I}_2}]=[\dot{\ce{D}}_1]=[\dot{\ce{D}}_2]=0\\ 
\end{cases},
\end{equation}
the ODEs associated to \emph{\ref{eq:miv}},
\begin{equation}\label{eq:odemodelmin4}
\begin{cases}
[\dot{\Lambda_1}]=-r_1([\Lambda_1],[\ce{D}_2])+r_2([\Lambda_2],[\ce{D}_1])\\
[\dot{\Lambda_2}]=r_1([\Lambda_1],[\ce{D}_2])-r_2([\Lambda_2],[\ce{D}_1])\\
[\dot{\ce{D}}_1]=[\dot{\ce{D}}_2]=0\\
\end{cases},
\end{equation}
and the ODEs associated to \emph{\ref{eq:Mv}},
\begin{equation}\label{eq:odemodelmin5}
\begin{cases}
[\dot{\ce{N\!E}_1}]=-r_1([\Lambda_1],[\ce{N\!E}_1],[\Lambda_2])+r_2([\Lambda_2],[\ce{N\!E}_2],[\Lambda_1])\\
[\dot{\ce{N\!E}_2}]=r_1([\Lambda_1],[\ce{N\!E}_1],[\Lambda_2])-r_2([\Lambda_2],[\ce{N\!E}_2],[\Lambda_1])\\
[\dot{\Lambda}_1]=[\dot{\Lambda}_2]=0
\end{cases}.
\end{equation}
Systems \eqref{eq:odemodelmin2}, \eqref{eq:odemodelmin4}, and \eqref{eq:odemodelmin5} all admit only one single steady-state for all choices of monotone reaction functions $\mathbf{r}$ and thus do not have the capacity for differentiation. %The reaction networks  \emph{\ref{eq:MII}}, \emph{\ref{eq:miv}}, \emph{\ref{eq:Mv}} all are non-autocatalytic. 
\end{prop}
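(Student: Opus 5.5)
In all three systems only one pair of variables moves: $(u_1,u_2)=([\ce{N\!E}_1],[\ce{N\!E}_2])$ in \eqref{eq:odemodelmin2} and \eqref{eq:odemodelmin5}, and $(u_1,u_2)=([\Lambda_1],[\Lambda_2])$ in \eqref{eq:odemodelmin4}. All other species are constant. I would therefore treat them as fixed positive parameters and use the conservation law $u_1+u_2=K$, which follows from the left kernel vector $(1,1)$ as in \ref{minmodelsynthesis}. Each system then reduces to a scalar ODE $\dot u_1 = H(u_1) := -\rho_1(u_1)+\rho_2(K-u_1)$ on $[0,K]$. Here $\rho_1(u)=r_1(\cdot,u,\cdot)$ is evaluated at the frozen values of the other reactants of reaction $1$, and $\rho_2$ likewise for reaction $2$. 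The key observation is that in every model the moving species appears as a reactant of exactly one reaction in each direction. By Definition~\ref{Monotone chemical functions}(4), with all frozen species positive, $\rho_1$ and $\rho_2$ are strictly increasing on $(0,K)$.

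Uniqueness then follows. $H'(u_1)=-\rho_1'(u_1)-\rho_2'(K-u_1)<0$ on $(0,K)$, so $H$ is strictly decreasing. By Definition~\ref{Monotone chemical functions}(2), $H(0)=\rho_2(K)>0$ and $H(K)=-\rho_1(K)<0$. Hence there is exactly one zero in $(0,K)$ for every $K>0$ and every choice of frozen concentrations. This holds even without kinetic symmetry, which we never need. The boundary points are not steady states, since $H\neq 0$ there. This is unlike \ref{minmodelsynthesis}, where $u_1=0$ is a steady state because $\Lambda$ is itself a reactant of the opposite reaction.

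For \eqref{eq:odemodelmin4} I would check that $\Lambda_j$ is consumed only through the reaction that has $\Lambda_j$ as a reactant: reaction $1$ has net effect $-\Lambda_1+\Lambda_2$, with reactants $\Lambda_1$ and $\ce{D}_2$. So $\rho_1$ depends on $u_1$ alone and the structure above applies. In \eqref{eq:odemodelmin5} the $\Lambda$'s are catalysts and are frozen, and in \eqref{eq:odemodelmin2} $\ce{N\!I}$ and $\ce{D}$ are frozen, so both fit directly.

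To connect with the symbolic framework, I would also note a second argument. The only nonzero reduced-Jacobian coefficient is $a_1=-(r_{1u_1}+r_{2u_2})$, which has a single sign. This is consistent with Corollary~\ref{cor:twosignsexpansion}: every $1\times1$ CS-matrix equals $-1$, so no unstable-positive feedback exists and zero-eigenvalue bifurcations are excluded.

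The main obstacle is the frozen species. They define a family of compatibility classes, and one must confirm that the dependence of $r_j$ on them does not break monotonicity in $u$. This is easy: their values are fixed positive constants, and monotonicity in $u$ holds pointwise in them. The degenerate case where a frozen concentration is zero should be excluded, since then there is no positive steady state, or every point is stationary.
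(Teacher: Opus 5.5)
Your proof is correct and is essentially the paper's argument: the derivative $H'(u_1)=-\rho_1'(u_1)-\rho_2'(K-u_1)$ of your reduced scalar field is exactly the trace $-(\partial_{u_1}r_1+\partial_{u_2}r_2)<0$ that the paper computes for each symbolic Jacobian. Your boundary-sign and intermediate-value step, using item 2 of Definition~\ref{Monotone chemical functions}, additionally gives existence of the positive steady state in every compatibility class for every monotone kinetics, which the paper only obtains loosely from consistency; one small slip is in your side remark, since the $1\times 1$ CS-matrices are not all $-1$: the catalytic species ($\ce{N\!I}_j,\ce{D}_j$ in \ref{eq:MII}, $\ce{D}_j$ in \ref{eq:miv}, $\Lambda_j$ in \ref{eq:Mv}) are reactant and product of the same reaction and give $0$, which is harmless because none is positive and $a_1$ still has a single sign.
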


\subsection{Autocatalytic and non-autocatalytic instabilities}\label{sec:autovsnonauto}

The models \ref{eq:modeli}, \ref{eq:modeli}+\ref{eq:modelii}, and \ref{eq:modeliii}, are non-autocatalytic. In contrast, both minimal models that exhibit the capacity for differentiation, \ref{minmodelsynthesis} and \ref{eq:Miii}, are autocatalytic. 

Why should evolution dismiss a functioning autocatalytic process and favor a larger non-autocatalytic network instead? We can not really answer this question, but would still like to discuss it here.

The central model \ref{eq:modeli}  may be regarded as an elaborated version of \ref{eq:MII}:  Thm.~\ref{thm:11} proves that also \ref{eq:modeli} does not have the capacity for differentiation. By contrast, the cis model \ref{eq:modeli}+\ref{eq:modelii} and the ligand-activation model \ref{eq:modeliii} do exhibit differentiation and Thm.~\ref{thm:12} and Thm.~\ref{thm:13} show that differentiation arises precisely from the additional interactions (the cis and blocked-ligand mechanisms), which induce non-autocatalytic instabilities. Nevertheless, the autocatalytic \ref{minmodelsynthesis} and \ref{eq:Miii} do not need this.

For further insight and comparison, we now investigate minimal non-autocatalytic instability motifs in a symmetric setting. As argued in Sec.~\ref{sec:preliminary}, the existence of a CS-matrix 
representing an unstable-positive feedback is a necessary condition for the capacity for
differentiation. In case of two variables, such an unstable-positive 
feedback is not only necessary but also sufficient, as established by the 
expansion in Lemma \ref{lem:CSexpansion} for the determinant of the Jacobian. Accordingly, a $2\times2$ non-autocatalytic unstable-positive feedback, in symmetric form, must necessarily take the explicit form:
\begin{equation}
S[\pmb{\kappa}]=
\begin{pmatrix}
\eta_1 & \eta_2\\
\eta_2  & \eta_1 
\end{pmatrix}, \quad\quad \text{
with $\eta_1<0$ and}\quad\quad
\operatorname{sign} \det S[\pmb{\kappa}]=(-1)^{2-1}=-1.
\end{equation}
Thus 
$ |\eta_1| < |\eta_2|$. If $\eta_2>0$ the system 
would be autocatalytic, since then the network possesses
a CS-matrix $S[\pmb{\kappa}]$ that is both unstable and Metzler.
%, and therefore autocatalytic. 
The only remaining case is $\eta_2<0$. W.l.o.g. fix one parameter to $-1$.
Then a minimal non-autocatalytic unstable-positive feedback takes one of 
the following two forms
\begin{equation}\label{eq:upfm}
S[\pmb{\kappa}]=
\begin{pmatrix}
-1 & -\eta\\
-\eta & -1
\end{pmatrix},\quad\text{for $\eta>1$}\quad\text{or}\quad
S[\pmb{\kappa}]=
\begin{pmatrix}
-\eta & -1\\
-1 & - \eta
\end{pmatrix},\quad\text{for $\eta<1$}.
\end{equation}
Both cases correspond to the same minimal network with two reactions, 1 and 2 below. Since this minimal model is intentionally designed to isolate the non-autocatalytic instability motif, we have to make an important exception here and do include constant production in order to ensure the existence of a positive steady-state. The resulting model (see also Fig.~\ref{fig:nonaut}, left) is
\begin{equation}\label{eq:minmodelabstr}\tag{\textbf{NonAut-I-$\eta$}}
\begin{aligned}
 \eta \ce{\Lambda_1} + \ce{\Lambda_2} \quad &\overset{1}{\longrightarrow} \quad \dots \quad &\quad \quad  
\eta \ce{\Lambda_2} + \Lambda_1 \quad &\overset{2}{\longrightarrow} \quad \dots\\
... \quad &\overset{{P_{\Lambda_2\;}}}{\longrightarrow} \quad \Lambda_2 \quad &\quad \quad ... \quad &\overset{{P_{\Lambda_1}\;}}{\longrightarrow} \quad \Lambda_1
\end{aligned}
\end{equation}

\begin{prop}\label{prop:nautI} Consider the system of ODEs associated to \emph{\ref{eq:minmodelabstr}}: 
\begin{equation}\label{eq:minmodelabstrode}
\begin{cases}
[\dot{\Lambda}_1]= P_{\Lambda_1}- \eta \;r_1([\Lambda_1],[\Lambda_2])-r_2([\Lambda_2],[\Lambda_1])\\
[\dot{\Lambda}_2]= P_{\Lambda_2}-r_1([\Lambda_1],[\Lambda_2])- \eta\;r_2([\Lambda_2],[\Lambda_1])
\end{cases}
\end{equation}
with kinetic symmetry constraints: 
\begin{equation}\label{eq:minmodelabstrsymm}
r_1([\Lambda_1],[\Lambda_2])\equiv r_2([\Lambda_2],[\Lambda_1])\equiv r(\cdot,\cdot), \quad P_{\Lambda_1} \equiv P_{\Lambda_2}\equiv P.
\end{equation}
For $\eta\neq 1$, system \eqref{eq:minmodelabstrode} has the capacity for zero-eigenvalue  bifurcation and thus differentiation with bifurcation condition:
\begin{equation}
    \partial_1 r = \partial_2 r.
\end{equation}
The instability motif, associated to non-autocatalytic unstable-positive feedback, which generates the necessary instability is:
\begin{equation}\label{nonautiinstmotif}
    \begin{cases}
        \eta \Lambda_1 +\Lambda_2\quad &\overset{1}{\longrightarrow}\dots\\
\eta \Lambda_2 + \Lambda_1 \quad &\overset{2}{\longrightarrow} \quad \dots\\
    \end{cases}
\end{equation}
    
\end{prop}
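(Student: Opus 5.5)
Following the workflow of Sec.~\ref{sec:preliminary}, I will first write down the stoichiometric matrix of \ref{eq:minmodelabstr}. The species are ordered $(\Lambda_1,\Lambda_2)$ and the reactions $(1,2,P_{\Lambda_1},P_{\Lambda_2})$, which gives
\[
S=\begin{pmatrix} -\eta & -1 & 1 & 0\\ -1 & -\eta & 0 & 1\end{pmatrix}.
\]
This matrix has no left kernel, so there are no conservation laws, and the vector $v=(1,1,1+\eta,1+\eta)>0$ lies in its right kernel. Hence the network is consistent, and nondegenerate with $\tilde k=2$. The production reactions have no reactants, so their rows in $R$ vanish. With $r_1$ depending on $([\Lambda_1],[\Lambda_2])$ and $r_2$ on $([\Lambda_2],[\Lambda_1])$, the symbolic reactivity rows are $(r_{1\Lambda_1},r_{1\Lambda_2})$ and $(r_{2\Lambda_1},r_{2\Lambda_2})$.

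Next I impose the kinetic symmetry \eqref{eq:minmodelabstrsymm} at a homogeneous steady state $[\Lambda_1]=[\Lambda_2]=\bar\lambda$. Such a state exists for every $P$, since $(1+\eta)r(\bar\lambda,\bar\lambda)=P$ can be solved by the parameter-rich choice. Symmetry gives $r_{1\Lambda_1}=r_{2\Lambda_2}=\partial_1 r=:a$ and $r_{1\Lambda_2}=r_{2\Lambda_1}=\partial_2 r=:b$. The Jacobian is then
\[
G=-\begin{pmatrix} \eta a+b & \eta b+a\\ a+\eta b & b+\eta a\end{pmatrix},
\qquad
\det G=(\eta a+b)^2-(\eta b+a)^2=(\eta^2-1)(a^2-b^2).
\]
I will cross-check this against the Child-Selection expansion of Lemma~\ref{lem:CSexpansion}. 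The $2$-CS matrices are built from reactions $1,2$ with $J(\Lambda_1)=1,J(\Lambda_2)=2$ or the reverse. They give $S[\pmb\kappa_1]=\begin{pmatrix}-\eta&-1\\-1&-\eta\end{pmatrix}$ with determinant $\eta^2-1$, weighted by $ab\cdot$(symmetric factor), and $S[\pmb\kappa_2]=\begin{pmatrix}-1&-\eta\\-\eta&-1\end{pmatrix}$ with determinant $1-\eta^2$. So $a_2=(\eta^2-1)(r_{1\Lambda_1}r_{2\Lambda_2}-r_{1\Lambda_2}r_{2\Lambda_1})$.

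For $\eta\neq1$ the two determinants have opposite signs. By Corollary~\ref{cor:twosignsexpansion}, $a_2$ can vanish, and under symmetry this happens exactly when $a=b$, that is $\partial_1 r=\partial_2 r$. This is the claimed bifurcation condition, and choosing $a$ slightly larger or smaller than $b$ changes the sign of $\det G$, so a real eigenvalue crosses zero. To see that the crossing is genuinely symmetry-breaking, I will split $G$ along the symmetric direction $(1,1)$ and the antisymmetric direction $(1,-1)$. The eigenvalues are $-(\eta+1)(a+b)<0$ and $(\eta-1)(b-a)\cdot(\pm)$; more precisely the antisymmetric eigenvalue is $-(\eta a+b)+(\eta b+a)=(\eta-1)(b-a)$. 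This eigenvalue passes through zero at $a=b$, and its eigenvector $(1,-1)$ is the differentiation direction.

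Finally, for the motif I identify which CS-matrix is an unstable-positive feedback. Its determinant must have sign $-1$, and both of its $1\times1$ diagonal entries are negative. For $\eta>1$ this is $S[\pmb\kappa_2]$, and for $\eta<1$ it is $S[\pmb\kappa_1]$; these are exactly the two forms in \eqref{eq:upfm}. Both involve only species $\Lambda_1,\Lambda_2$ and reactions $1,2$, which gives motif \eqref{nonautiinstmotif}. Their off-diagonal entries are negative, so the matrices are not Metzler and the motif is non-autocatalytic. The production reactions cannot contribute to such a CS-matrix, because they have no reactants. The main obstacle I expect is bookkeeping: matching the symmetry identification of the $R$ entries correctly with the argument order in $r_2([\Lambda_2],[\Lambda_1])$, since a wrong identification would erase the $a^2-b^2$ factor.
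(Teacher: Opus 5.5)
Your proposal is correct and follows essentially the paper's proof. The symmetric Jacobian, the factorization $\det G=(\eta^2-1)\bigl((\partial_1 r)^2-(\partial_2 r)^2\bigr)$ read off from the two $2$-CS matrices, and the $\eta$-dependent choice of which of them is the non-Metzler unstable-positive feedback are all the paper's own steps. Your splitting into the eigenvalues $-(\eta+1)(a+b)$ on $(1,1)$ and $(\eta-1)(b-a)$ on $(1,-1)$ is a worthwhile extra that the paper omits, since it shows the zero crossing is genuinely antisymmetric. One small slip: under symmetry the weight of $S[\pmb\kappa_1]$ is $r_{1\Lambda_1}r_{2\Lambda_2}=a^2$, not $ab$, which your final formula for $a_2$ already reflects correctly.
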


\begin{remark}
In \emph{\ref{eq:minmodelabstr}} there are no conservation laws. The concentrations of $\Lambda_1$ or $\Lambda_2$ are thus not bounded and may diverge, so that the loss of stability of the homogeneous steady-state $\bar{\Lambda}_1=\bar{\Lambda}_2$ may result in, e.g., ${\Lambda}_1$ converging to 0, while $\Lambda_2$ diverges to infinity.
\end{remark}
\begin{remark} Interestingly, in \emph{\textbf{NonAut-I-2}} reactions 1 and 2  share the same reactant stoichiometry as the reactions in \emph{\ref{minmodelsynthesis}}. However, the autocatalytic \emph{\ref{minmodelsynthesis}} does not need production terms because its combined reactions produce back the same reactants. In contrast, the non-autocatalytic \emph{\ref{eq:minmodelabstr}} does not possesses a sequence of reactions $(j_1,\dots,j_n)$ 
where a species $\Lambda_j$ appears as a 
reactant and ends with $j_n$ where the same $\Lambda_j$ appears as a 
product. In this sense, \emph{\ref{eq:minmodelabstr}} is closer to a simple continuous-stirred tank reactor
\begin{equation}
\begin{split}
    \ce{->} \quad\Lambda_1\quad\ce{->}\\
    \ce{->}\quad \Lambda_2\quad\ce{->}
\end{split}  
\end{equation}
but with a symmetric outflow law given by reactions $1$ and $2$. This 
type of outflow puts $\Lambda_1$ and $\Lambda_2$ in a reactant–reactant relation 
and introduces instability through the non-autocatalytic unstable 
positive feedback \eqref{nonautiinstmotif}. The system can therefore still be viewed  as a continuous-flow reactor with no reactant–product loop, but its dynamics are fundamentally altered by the introduction of such a positive feedback.
\end{remark}

Let us now describe a minimal non-autocatalytic instability without production. If we abstractly consider a mutual transfer 
of biochemical 
information between two cells, a simple 
mechanism of this kind with the capacity for differentiation could then be as follows (see also Fig.~\ref{fig:nonaut}, right):
\begin{equation}\label{eq:modelmin2}\tag{\textbf{NonAut-II-$\eta$}}
\begin{aligned}
\eta\Lambda_1+\Lambda_2  \quad&\overset{1}{\ce{->}} \quad \eta \Lambda_1+\ce{I_2} \quad &\quad \quad \eta \Lambda_2+\Lambda_1  \quad&\overset{
3}{\ce{->}} \quad \eta \Lambda_2+\ce{I_1}\\
     \ce{I_2}\quad &\overset{2}{\ce{->}} \quad \Lambda_2 \quad &\quad \quad   \ce{I_1}\quad &\overset{4}{\ce{->}} \quad \Lambda_1\\
\end{aligned}
\end{equation}
Here $\Lambda_1,\Lambda_2$ denote entities in the two cells that interact with each other, while $\ce{I}_j$ denotes an intermediate entity in cell $j$, arising from $\Lambda_j$ upon its interaction with $\eta$ entities $\Lambda_i$ in cell $i$. This may correspond, for instance, to the translocation of $\Lambda_j$ to the nucleus, to its endocytosis and release, or to a silenced state. The process occurs symmetrically and simultaneously in both cells. \ref{eq:modelmin2}  has the capacity for differentiation, but with a caveat for $\eta=1$, see Remark \ref{rmk:asymmetricreactions}.

%, without including the basic asymmetric interaction with splitting described in \ref{minmodelsynthesis}, but it is larger than \ref{minmodelsynthesis} and needs asymmetric reaction functions, see Remark~\ref{rmk:asymmetricreactions} below.
\begin{prop} \label{prop:nautII}
Consider the ODE system associated to the network \emph{\ref{eq:modelmin2}}:
\begin{equation}\label{eq:minmodel1eq}
\begin{cases}
    [\dot{{\Lambda}}_1]=-r_{3}([\Lambda_2],[\Lambda_1])+r_{4}(\ce{I}_1),\\
[\dot{\ce{I}}_1]=r_{3}([\Lambda_2],[\Lambda_1])-r_{4}(\ce{I}_1)\\
    [\dot{{\Lambda}}_2]=-r_{1}([\Lambda_1],[\Lambda_2])+r_{2}(\ce{I}_2) \\
    [\dot{\ce{I}}_2]=r_{1}([\Lambda_1],[\Lambda_2])-r_{2}(\ce{I}_2)\\
\end{cases}
\end{equation}
with kinetic symmetry constraints:
\begin{equation}\label{eq:minmodel1symm}
  r_{1}([\Lambda_1],[\Lambda_2])\equiv  r_{3}([\Lambda_2],[\Lambda_1])\equiv r_o(\cdot,\cdot), \quad \quad  \text{and}\quad\quad r_{2}(\cdot)=r_{4}(\cdot)=r_e(\cdot).
\end{equation}
Let $\partial_1 r_o$ and $\partial_2 r_o$ denote the derivatives of $r_o$ w.r.t. its first and second arguments, {and $\partial_I r_e$ the derivative of $r_e$ w.r.t. to its single argument.} System \eqref{eq:minmodel1eq} has the capacity for zero-eigenvalue bifurcation and consequently differentiation, with bifurcation condition:
\begin{equation}\label{eq:bifcond}
    \partial_2 r_o +\partial_Ir_e= \partial_1 r_o,
\end{equation}
The existence of an unstable steady-state requires 
\begin{equation}\label{eq:bifcondbetter}
   \partial_2 r_o +\partial_Ir_e < \partial_1 r_o.
\end{equation}
The unique associated instability motif, represented by a non-autocatalytic unstable-positive feedback, which generates the necessary instability is:
\begin{equation}\label{eq:instability motifnonautII}
    \begin{cases}
     \eta \Lambda_1 + \Lambda_2  \quad&\overset{1}{\ce{->}} \quad \eta\Lambda_1  \;+\; ...\\
   \eta \Lambda_2 + \Lambda_1  \quad&\overset{3}{\ce{->}} \quad \eta\Lambda_2  \;+\; ...\\
   
  \end{cases}
\end{equation}
\end{prop}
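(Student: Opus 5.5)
The plan is to follow the workflow of Sec.~\ref{sec:preliminary}: check consistency and nondegeneracy, reduce to a stoichiometric compatibility class, and compute the symbolic Jacobian at a homogeneous steady state under the constraints \eqref{eq:minmodel1symm}. The bifurcation condition should then come out of the sign of a single determinant. Afterwards I would use Lemma~\ref{lem:CSexpansion} and Corollary~\ref{cor:twosignsexpansion} to identify the CS-matrices responsible for the sign change.

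\emph{Structural step.} I order the species as $(\Lambda_1,\ce{I}_1,\Lambda_2,\ce{I}_2)$ and the reactions as $1,2,3,4$. Reaction $1$ changes only $\Lambda_2\mapsto \ce{I}_2$, since $\eta\Lambda_1$ appears on both sides. Likewise reaction $3$ changes only $\Lambda_1\mapsto\ce{I}_1$. Hence
\[
S=\begin{pmatrix} 0&0&-1&1\\ 0&0&1&-1\\ -1&1&0&0\\ 1&-1&0&0\end{pmatrix},
\]
with the two conservation laws $[\Lambda_1]+[\ce{I}_1]=K_1$ and $[\Lambda_2]+[\ce{I}_2]=K_2$, so $n=2$. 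The vector $v=(1,1,1,1)>0$ lies in $\ker S$, so the network is consistent. Using the conservation laws, the reduced system in $x=[\Lambda_1]$, $y=[\Lambda_2]$ reads
\[
\dot x=-r_o(y,x)+r_e(K_1-x),\qquad \dot y=-r_o(x,y)+r_e(K_2-y).
\]
At a homogeneous steady state ($K_1=K_2$, $x=y$) I set $a=\partial_1 r_o$, $b=\partial_2 r_o$, $c=\partial_I r_e$. The reduced Jacobian is then the symmetric circulant
\[
\begin{pmatrix}-(b+c)&-a\\-a&-(b+c)\end{pmatrix}.
\]
Its eigenvalues are $-(b+c)-a<0$ on the symmetric eigenvector $(1,1)$ and $a-(b+c)$ on the antisymmetric eigenvector $(1,-1)$. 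Its determinant $(b+c)^2-a^2$ equals the coefficient $a_{\tilde k}$ with $\tilde k=2=|\mathbf M|-n$; it is algebraically nonzero, so the system is nondegenerate. This gives the bifurcation condition \eqref{eq:bifcond} and the instability condition \eqref{eq:bifcondbetter}, with the critical direction antisymmetric as required for differentiation. By parameter-richness (Definition~\ref{def:prich}), $a,b,c>0$ can be chosen freely at a given homogeneous steady state, so the degenerate case $a=b+c$ and its crossing are realizable.

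\emph{Motif step.} I expand $a_2$ via Lemma~\ref{lem:CSexpansion}. The candidate 2-CS pairs have species $\{\Lambda_1,\Lambda_2\}$ matched to reactions $\{1,3\}$ or $\{3,1\}$ (with $\Lambda_1$ a reactant of both $1$ and $3$, and similarly for $\Lambda_2$), plus the pairs involving $\ce{I}_j$ and reactions $2,4$. Taking $J(\Lambda_1)=1$ and $J(\Lambda_2)=3$ gives the CS-matrix $\begin{pmatrix}0&-1\\-1&0\end{pmatrix}$ with determinant $-1=(-1)^{2-1}$. Its $1\times1$ principal minors are $0$, so none has sign $(-1)^0$, and it is therefore an unstable-positive feedback. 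It has negative off-diagonal entries, so it is not Metzler and is non-autocatalytic. All other 2-CS should give determinants $\ge 0$, producing the positive monomials $b^2$, $bc$, $c^2$, against $-a^2$ from this one. I would check this enumeration carefully to establish uniqueness: it is the only step needing care, essentially bookkeeping over CS triples, including those where a species is both reactant and product. Restricting the network to $\{\Lambda_1,\Lambda_2\}$ and $\{1,3\}$ then gives the motif \eqref{eq:instability motifnonautII}.
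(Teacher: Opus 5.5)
Your proposal is correct and follows essentially the paper's own route. Both arguments obtain the conservation laws and consistency from $S$, form the symbolic Jacobian at kinetic symmetry, read the bifurcation and instability conditions off $a_2=(\partial_2 r_o+\partial_I r_e)^2-(\partial_1 r_o)^2$, and identify the unique unstable-positive feedback $\begin{pmatrix}0&-1\\-1&0\end{pmatrix}$ on $(\Lambda_1,\Lambda_2)$ with $J=(1,3)$; the enumeration you deferred does check out, since every other $2$-CS has determinant $0$ or $+1$.

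Working with the reduced $2\times 2$ Jacobian and its eigenvectors $(1,1)$, $(1,-1)$ is just a cleaner presentation of the same computation. Your $a_2$ is also the right one: the $\lambda^2$ coefficient printed in the paper's proof has $2\,\partial_I r_e\,\partial_1 r_o$ where $2\,\partial_I r_e\,\partial_2 r_o$ is meant.
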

The instability motif \eqref{eq:instability motifnonautII} resembles the instability motifs \eqref{eq:motif12a}, \eqref{eq:motif12b}, and \eqref{eq:motif12c} of the cis model \ref{eq:modeli}+\ref{eq:modelii}, and \eqref{eq:motif3} of the ligand-activation model \ref{eq:modeliii}, in the sense that it contains an even number of reactant–reactant interactions (in this case two), together with reactant–product loops attached to them. See Fig.~\ref{fig:figure2} and Fig.~\ref{fig:nonaut}, right.

\begin{figure}[H]
\centering
\setlength{\tabcolsep}{4pt} % no space between columns
\renewcommand{\arraystretch}{0} % no space between rows
\begin{tabular}{|c|c|}
  \hline
  \includegraphics[width=0.40\textwidth, keepaspectratio]{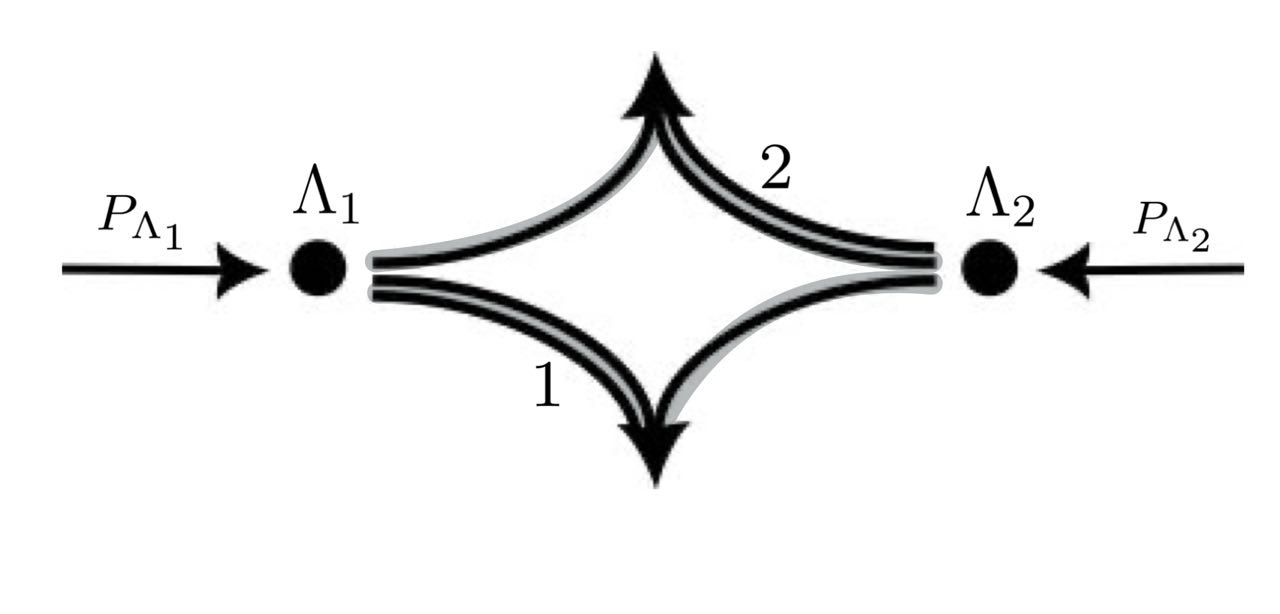} &
\includegraphics[width=0.40\textwidth,keepaspectratio]{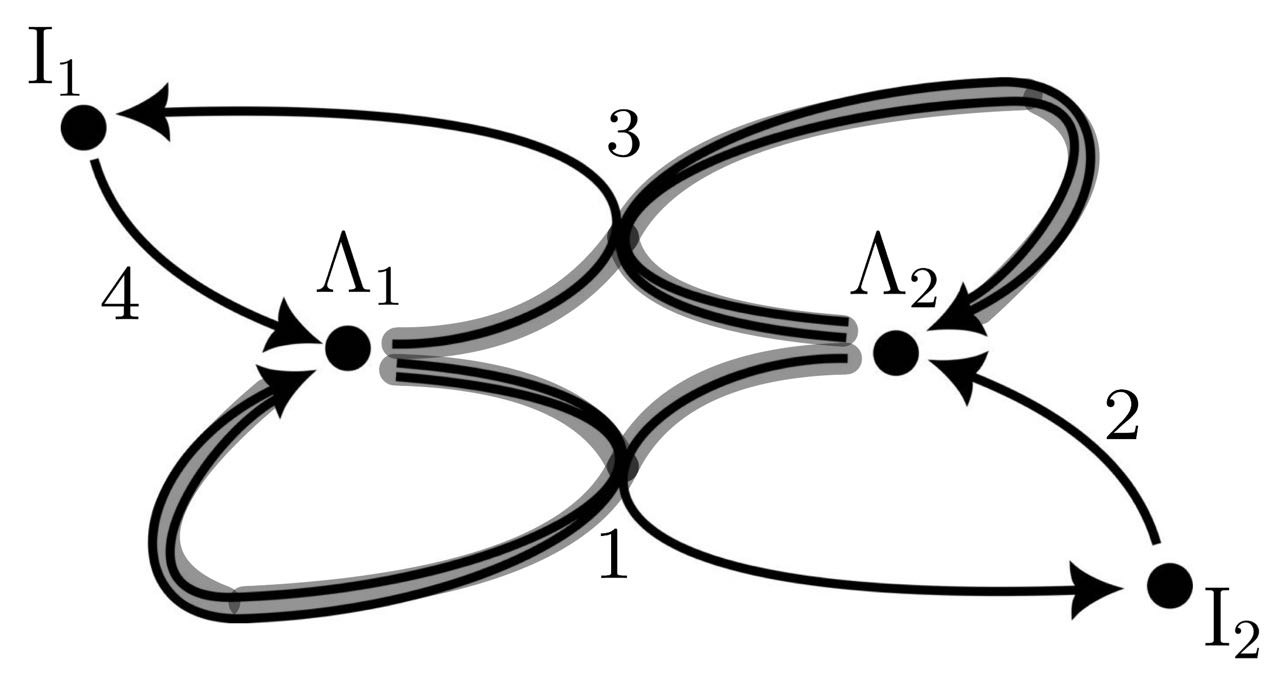}\\
  \hline
\end{tabular}
\caption{Networks \textbf{NonAut-I-2} (left) and \textbf{NonAut-II-2} (right), with instability motifs.}
\label{fig:nonaut}
\end{figure}

\begin{remark}\label{rmk:asymmetricreactions}
In contrast to \emph{\ref{eq:minmodelabstr}}, the model \emph{\ref{eq:modelmin2}} has the capacity for differentiation for all $\eta$. However, for condition \eqref{eq:bifcondbetter} to hold  for \emph{\textbf{NonAut-II-1}} at kinetic symmetry at a homogeneous steady-state, 
it is necessary that the functions $r_1$, $r_3$ are \emph{not} symmetric, i.e., $r(x,y)\neq r(y,x).$ 
This is a crucial restriction, although parameter-rich kinetics such as Michaelis--Menten are generally not symmetric:
\begin{equation}
    r(x,y)=k\dfrac{x}{(1+ax)}\dfrac{y}{(1+by)}, \quad \mbox{for} \quad
a \neq b \; ,
\end{equation}
while parametrically `poor' mass-action kinetics 
$r(x,y)=kxy$ naturally are.  Even if $\Lambda_1$ and $\Lambda_2$ appear as reactants in both reaction 1 and 3 with the same stoichiometry, an asymmetric reaction function means that the interaction $\Lambda_1+\Lambda_2$ is different from that of $\Lambda_2+\Lambda_1$, similarly to \emph{\ref{eq:Miiib}}, see Remark \emph{\ref{rmk:miiib}}. This type of asymmetry needs further explanation. \emph{\textbf{NonAut-II-2}}, though, is non-autocatalytic, does not need production or decay terms, and the asymmetry is encoded in the stoichiometry. %Thus it is closest to our biological models \emph{\ref{eq:modeli}}+\emph{\ref{eq:modelii}} and \emph{\ref{eq:modeliii}}.}
\end{remark}

\begin{figure}[htbp]
\centering
\setlength{\tabcolsep}{6pt}
\begin{tabular}{|c|c|c|}
\hline
\includegraphics[width=0.3\textwidth]{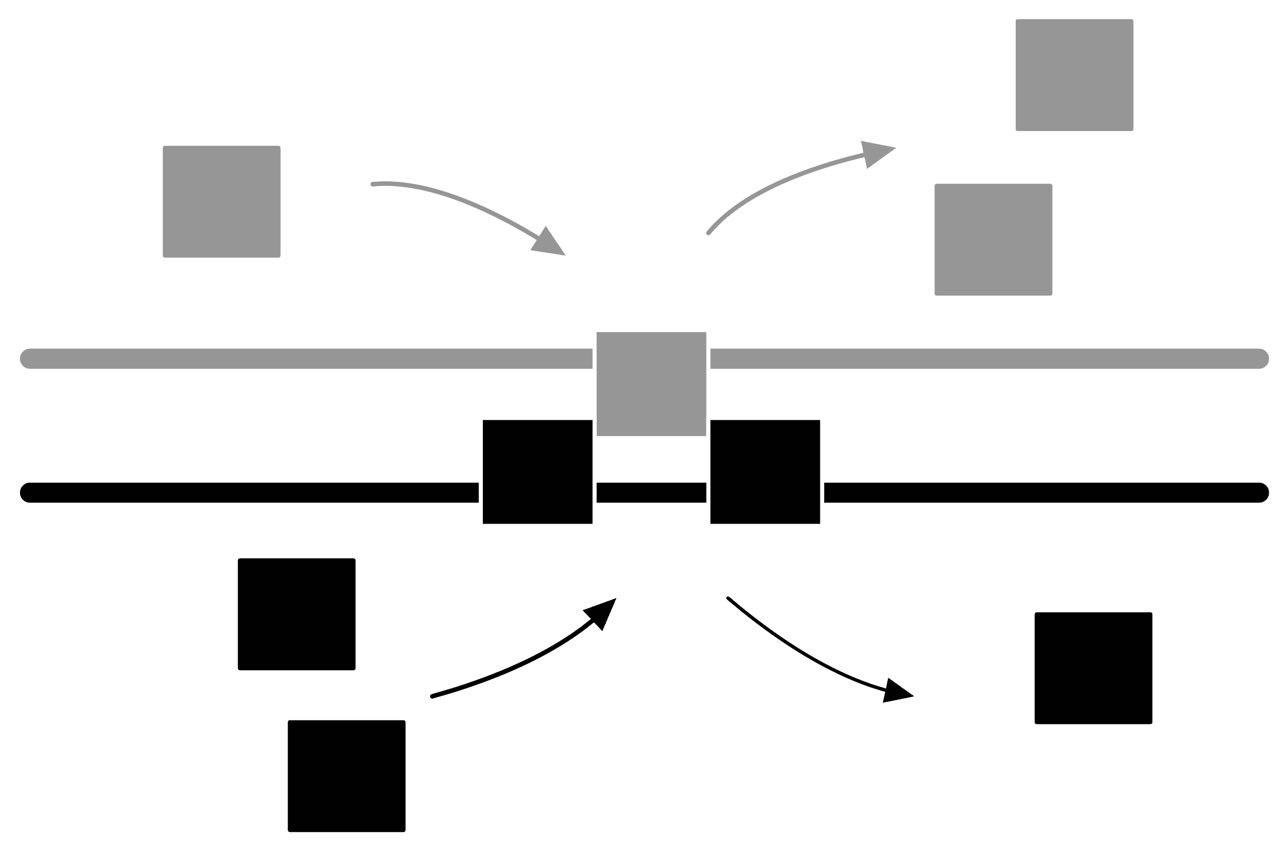} &
\includegraphics[width=0.27\textwidth]{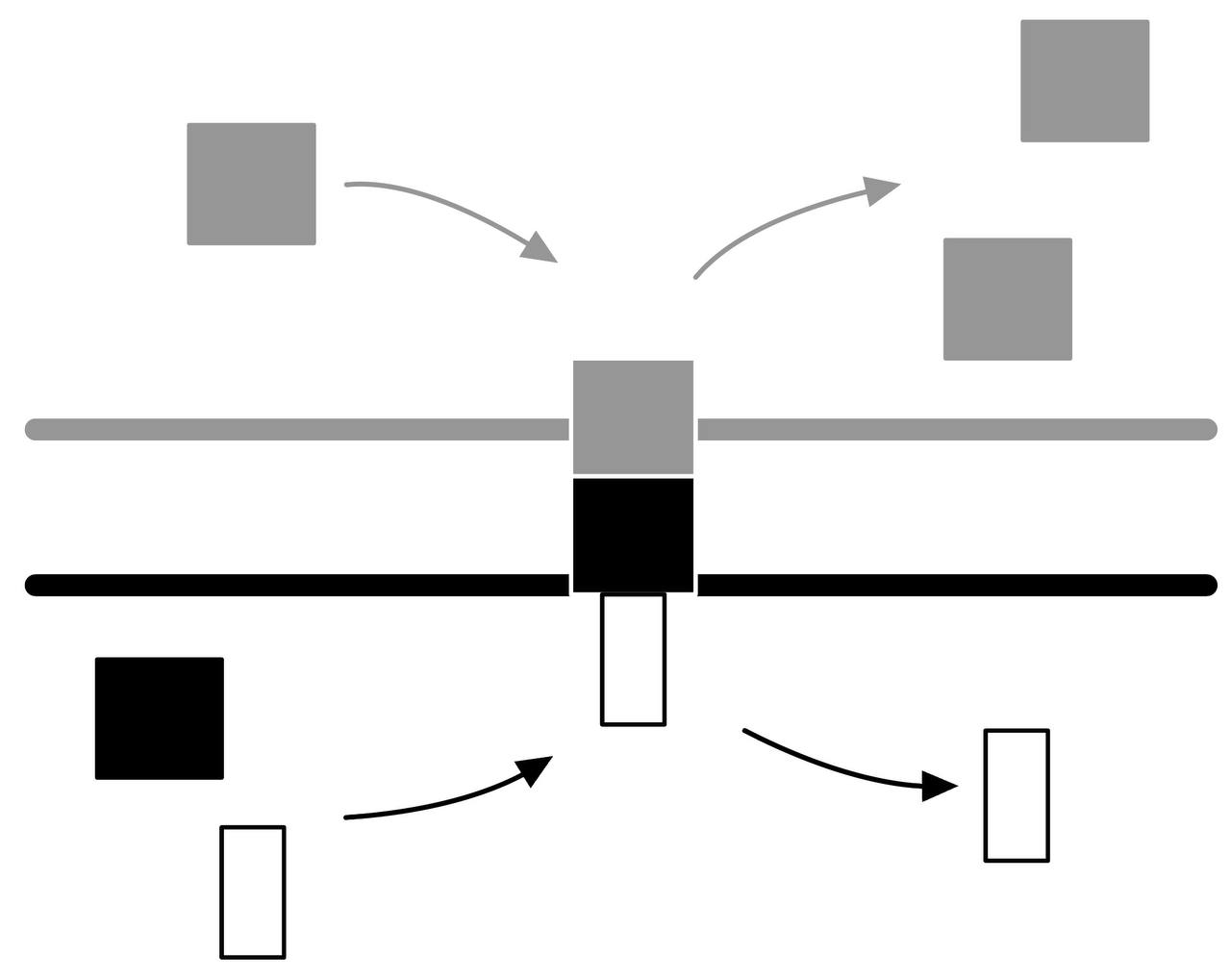}  &  \includegraphics[width=0.3\textwidth]{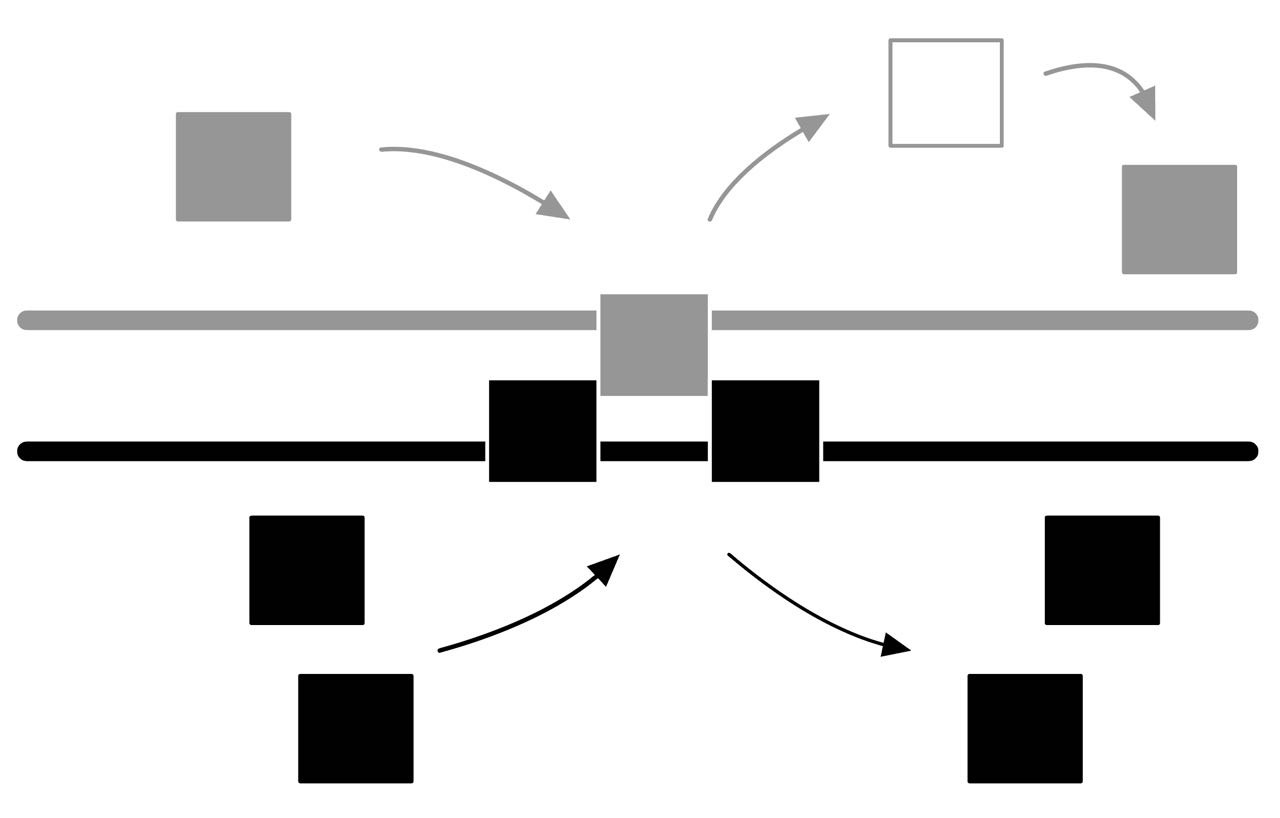}\\
\hline
\end{tabular}
\begin{tabular}{|c|c|}
\hline
\includegraphics[width=0.26\textwidth]{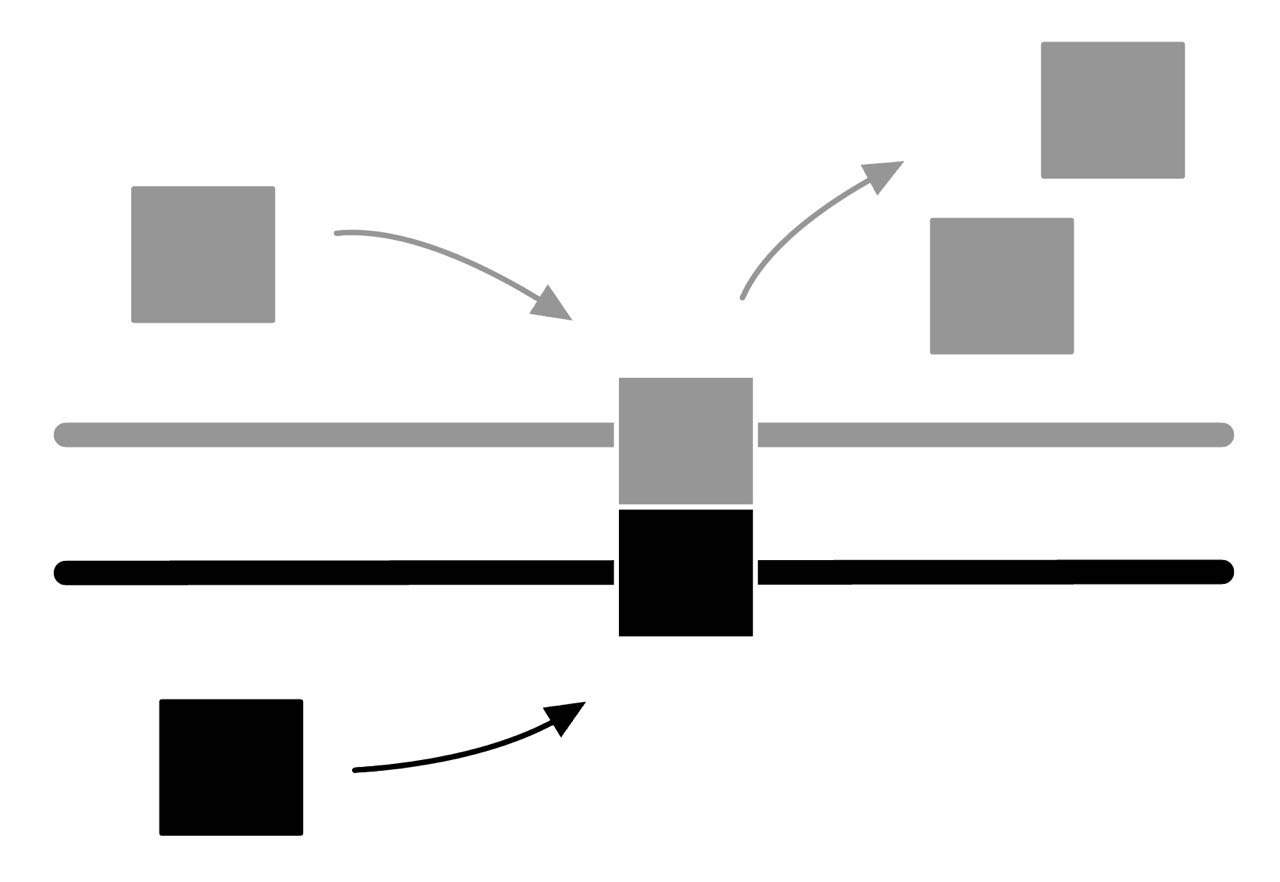} &
\includegraphics[width=0.26\textwidth]{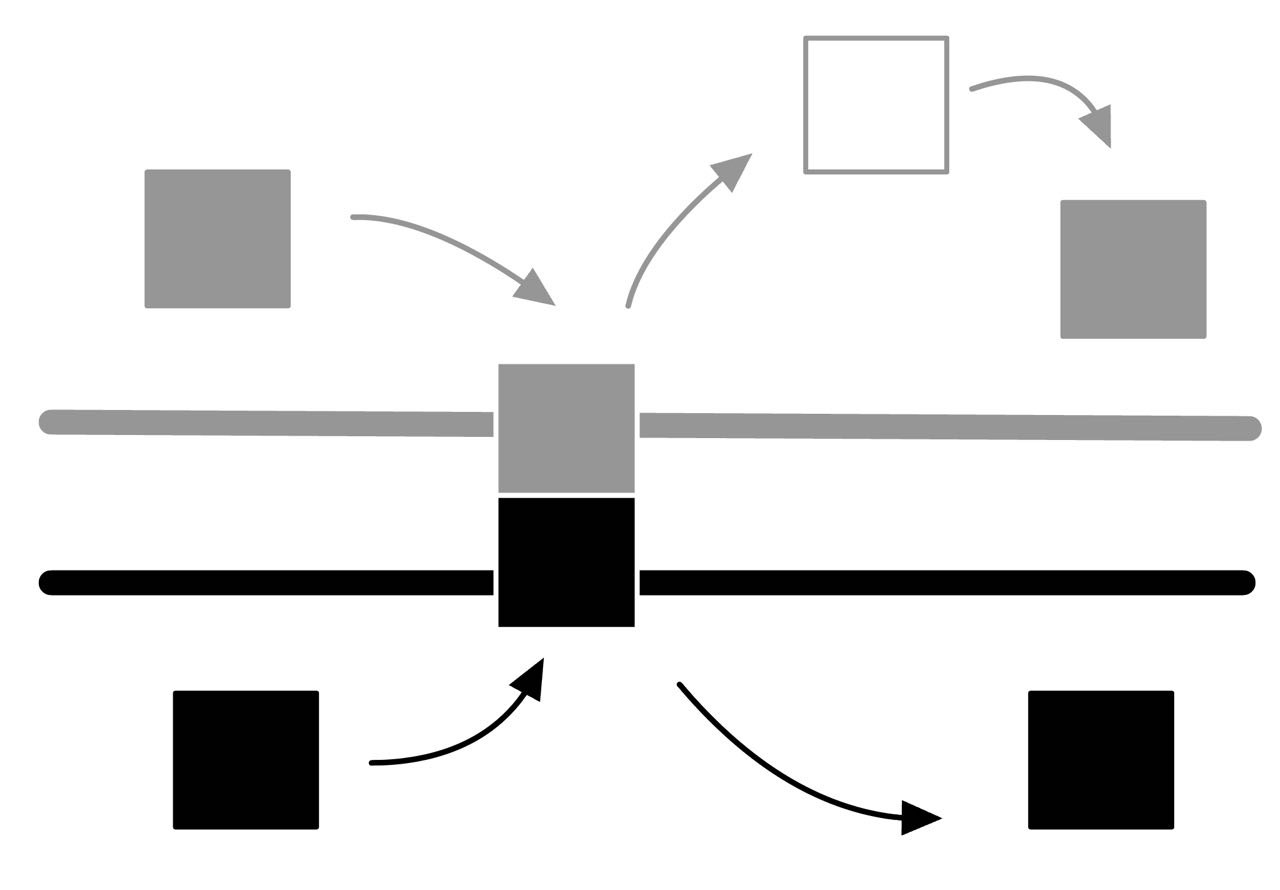} \\
\hline
\end{tabular}
\caption{Top row: models \ref{minmodelsynthesis} (top left), \ref{eq:Miii} (top center), \textbf{NonAut-II-2} (top right). Bottom row: models \ref{eq:Miiib} (bottom left), and \textbf{NonAut-II-1} (bottom right). Black squares denote $\Lambda_1$ and grey squares $\Lambda_2$. The white rectangle denotes $\ce{N\!I_1}$  in \ref{eq:Miiib},  and $\ce{I_2}$ in \ref{eq:modelmin2}. As in Fig.~\ref{fig:figure1}, all interactions are understood to occur symmetrically. All models possess the capacity for differentiation. In the top row, the asymmetric interaction between black and grey squares is structurally explicit; in the bottom row, the asymmetry arises solely from the reaction function modeling the interaction and this needs a further biochemical explanation, see Remarks \ref{rmk:miiib} and \ref{rmk:asymmetricreactions}.
}
\label{fig:biomin}
\end{figure}

%\begin{remark}
%Mathematically analogous variants of \emph{\ref{eq:modelmin2}} include  \begin{equation*}
%\begin{aligned}
%\eta \Lambda_1 +  \Lambda_2 %\quad&\overset{
%1}{\ce{->}} \quad \ce{I_1}+ \Lambda_2 \quad &\quad \quad \eta \Lambda_2 + \Lambda_1 \quad&\overset{3}{\ce{->}} \quad \ce{I_2}+\Lambda_1\\
%    \ce{I_1}\quad &\overset{2}{\ce{->}} \quad \eta \Lambda_1  \quad &\quad \quad  \ce{I_2}\quad &\overset{4}{\ce{->}} \quad \eta \Lambda_2\\
%\end{aligned},
%\end{equation*}
%or, for $\eta=2$,
%\begin{equation*}
%\begin{aligned}
%2 \Lambda_1 +  \Lambda_2 \quad&\overset{
%1}{\ce{->}} \quad \ce{I_1}+ \Lambda_1+\Lambda_2 \quad &\quad \quad 2 \Lambda_2 + \Lambda_1 \quad&\overset{3}{\ce{->}} \quad \ce{I_2}+\Lambda_2 + \Lambda_1\\
%    \ce{I_1}\quad &\overset{2}{\ce{->}} \quad \Lambda_1  \quad &\quad \quad  \ce{I_2}\quad &\overset{4}{\ce{->}} \quad  \Lambda_2\\
%\end{aligned}.
%\end{equation*}
%Identically to \emph{\ref{eq:modelmin2}}, both are non-autocatalytic, do not need production or decay terms, and have the capacity for differentiation. We omit the detailed analysis, as Prop.~\ref{prop:nautII} applies with trivial modifications. 
%\end{remark}

\section{Discussion}\label{sec:discussion} 
We are interested in understanding the essential features of the Notch pathway
which drives two initially identical cells to finally produce different
levels of concentrations of Notch or Delta. In order to do so we employed
biochemical information as well as abstract mathematical considerations
for symmetry breaking. We work under the assumption that evolution may have
optimized this pathway since it is an essential pathway and is preserved
over many species. Of course this is only a starting point for further considerations, since the Notch pathway is relevant
for much more detailed patterning than just driving two neighboring cells
into two different states.

From our analysis it turns out that within our class of presented models
\ref{eq:modeli}+\ref{eq:modelii} and \ref{eq:modeliii} can give rise to differentiation in the sense
we discussed before. Both are non-autocatalytic and include the asymmetry
(with the splitting of Notch due to the contact with Delta of another cell)
of the central model together with either the cis-interaction or the ligand activation.
Both do not need explicit production or degradation terms in order to work. Here, we rely on a purely stoichiometric definition of autocatalysis to classify the models as autocatalytic or non-autocatalytic.
 
In the class of minimal models
\ref{minmodelsynthesis} and \ref{eq:Miii} can give rise to differentiation. Both are autocatalytic.
Both do also not need explicit production or degradation terms in order to work.

Further, when looking for non-autocatalytic reduced networks, \ref{eq:minmodelabstr} and \ref{eq:modelmin2}
can give rise to differentiation, but \ref{eq:minmodelabstr} needs production terms in order to work,
which \ref{eq:modelmin2} does not need.  \textbf{NonAut-II-1} needs asymmetric reaction functions for an in principle symmetric interaction between $\Lambda_1$ and $\Lambda_2$, which needs an explanation. This issue is similar for \ref{eq:Miiib}. Thus at first glance a concrete biochemical interpretation
for \textbf{NonAut-II-1} and \ref{eq:Miiib} seems more difficult than for the other cases. In turn, \textbf{NonAut-II-2} does not need asymmetric reaction functions and is in that sense closest to \ref{eq:modeli}+\ref{eq:modelii} and \ref{eq:modeliii}, and a smaller network.

If we keep the reasoning from the start of our paper,
namely to exclude production and degradation as central players for the differentiation
process, then we are left with the question of autocatalysis vs non-autocatalysis,
at least when comparing all minimal models, and with the question which types of asymmetry allow for a biochemical explanation. The latter one seems more natural for asymmetry induced by stoichiometry. Discussing this issue probably needs more
experimental information. As for the former, the models we derived from the biological
information available to us are all non-autocatalytic. But this does not mean
that we can answer the question here why or if evolution favors non-autocatalytic
networks over autocatalytic ones in the context of cell differentiation.

Therefore let's shift the point of view.
Does our analysis allow for a reasonable hypothesis
when looking back to the early evolution of the Notch pathway or to other
pathways relevant for cell differentiation?
If so, then our suggestions would be to consider the following questions:

Did early versions of the Notch pathway start from the finally extracellular
and intracellular Notch domains and Delta all being similar entities?

Both \ref{eq:modeli}+\ref{eq:modelii} and \ref{eq:modeliii} include the asymmetric interaction with splitting of
two entities from cell 1 and one entity from cell 2 which afterwards are
redistributed as one entity in cell 1 and two entities in cell 2, and vice versa.

Did early versions of the Notch pathway not yet exhibit this asymmetric interaction with splitting
but rather have one entity of cell 1 interacting with another single entity of cell 2, or even have one entity of cell 1 interacting with the corresponding entity of cell 2?

Is there a possibility to develop biomimetic or biochemical experiments e.g. for the minimal models \ref{minmodelsynthesis} and \textbf{NonAut-II-2}
where one could see different concentrations arising in two different
but interacting `containers' which had the same initial concentrations?

\section{Proofs}\label{sec:proofs}

\subsection*{Proofs of Sec.~\ref{sec:mainresult}}
\proof[Proof of Thm.~\ref{thm:11}]
We divide the proof in three steps. 

\textit{Step 1: {The} network is consistent.} The system admits at least one positive steady state. This is equivalent to finding a positive right-kernel vector of the stoichiometric matrix $S$ of \ref{eq:modeli}, which reads
\begin{equation*}S=
\scalebox{0.70}{$\begin{pmatrix}
         -1 & 0 & 1 & 0 & 0 & 0\\
         -1 & 1 & 0 & 0 & 0 & 0\\
         1 & -1 & 0 & 0 & 0 & 0\\
         0 & 0 & 1 & 0 & -1 & 0\\
         0 & 0 & -1 & 0 & 1 & 0\\
         0 & 0 & 0 & -1 & 0 & 1\\
         0 & 0 & 0 & -1 & 1 & 0\\
         0 & 0 & 0 & 1 & -1 & 0 \\
         0 & -1 & 0 & 0 & 0 & 1\\
         0 & 1 & 0 & 0 & 0 & -1
     \end{pmatrix}$.}
\end{equation*}
Only the stoichiometric coefficients $\{0,1\}$ are present and any species participates only in two reactions, one as a reactant and one as a product. This implies that the positive vector $v=(k,k,k,k,k,k)$, $k\in\mathbb{R}_{>0}$, is indeed a right-kernel vector of $S$, $Sv=0$. Thus the network is consistent and one positive steady state exists.

\textit{Step 2: {The network is nondegenerate.}}
For nondegeneracy, the dimension of system \eqref{eq:system11} must be equal the number of species (10) minus the number of independent linear conservation laws, i.e., the dimension $n$ of the left kernel $W$ of the stoichiometric matrix $S$. For this, we compute an arbitrary basis as:
\begin{equation*}
\begin{split}
    W=\operatorname{span}\langle
    &(
        0,1,1,0,0,0,0,0,0,0
    ),
    (
        0,0,0,1,1,0,0,0,0,0
    ),
    (
        1,0,1,0,1,1,0,1,0,1
    ),\\
   &(
        0,0,0,0,0,0,1,1,0,0
    ),
    (
     0,0,0,0,0,0,0,0,1,1
    )\rangle
\end{split}
\end{equation*}
Thus there are 5 linearly independent conservation laws ($n=5$), namely
\begin{equation*}
\begin{cases}
w_1 := \ce{N\!I_1}+\ce{N_1},\\
w_2 := \ce{D_1}+\ce{T_1},\\
w_3 := \ce{N\!E_1}+\ce{N_1}+\ce{T_1}+\ce{N\!E_2}+\ce{N_2}+\ce{T_2},\\
w_4 := \ce{N\!I_2}+\ce{N_2},\\
w_5 := \ce{D_2}+\ce{T_2}.
\end{cases}
\end{equation*}
All species appear in at least one of the conserved quantities, which guarantees that the orbits of the system \eqref{eq:system11} 
cannot diverge and are bounded. To show that 
the system is nondegenerate, it is enough to show that the coefficient 
$a_{|\mathbf{M}|-n}=a_5$ of the characteristic polynomial $g(\lambda)$ of the symbolic 
Jacobian is non-identically zero. Via Lemma \ref{lem:CSexpansion}, this is equivalent to the existence of at least one $5\times5$ invertible 
CS-matrix.\\
We provide an arbitrary example. Consider the CS-triple 
\begin{equation*}
\pmb{\kappa}=\scalebox{0.80}{$(\{\ce{N\!E_1},\ce{N_1},\ce{T_1},\ce{N_2},\ce{T_2}\},\{11,12,13,22,23\},J(\ce{N\!E_1},\ce{N_1},\ce{T_1},\ce{N_2},\ce{T_2})=(11,12,13,22,23)).$}
\end{equation*}
Its associated CS-matrix $S[\pmb{\kappa}]$ reads:
\begin{equation*}
S[\pmb{\kappa}]=\scalebox{0.70}{$
    \begin{pmatrix}
        -1 & 0 &0 & 0 & 0\\
        1 & -1 & 0 & 0 & 0\\
        0 & 0 &-1 & 1 & 0\\
        0 & 0 & 0 & -1 & 0\\
        0 & 1 & 0 & 0 & -1
    \end{pmatrix}$},
\end{equation*}
which is invertible (all eigenvalues are $-1$). The network is thus nondegenerate.

\textit{Step 3: The positive steady state is unique and always locally stable.}
Since
$G=SR$, its nonzero spectrum is identical to that of $H:=RS$. Here:
\begin{equation*}
R=\scalebox{0.70}{$\begin{pmatrix}
     r_{11\ce{N\!E_1}} & r_{11\ce{N\!I_1}} & 0 & 0 & 0 & 0 & 0 & 0 & 0 & 0\\
     0 & 0 & r_{12\ce{N_1}} & 0 & 0 & 0 & 0 & 0 & r_{12\ce{D_2}} & 0\\
     0 & 0 & 0 & 0 & r_{13\ce{T_1}} & 0 & 0 & 0 & 0 & 0\\
     0 &0 & 0 & 0 & 0 &r_{21\ce{N\!E_2}} & r_{21\ce{N\!I_2}} & 0 & 0 & 0\\
      0 & 0 & 0 & r_{22\ce{D_1}} & 0 & 0 & 0 & r_{22\ce{N_2}} & 0 & 0\\
       0 & 0 & 0 & 0 & 0 & 0 & 0 & 0 & 0 & r_{23\ce{T_2}}
    \end{pmatrix}$,}
\end{equation*}
which yields that $H$ equals
\begin{equation*}
\scalebox{0.70}{$\begin{pmatrix}
 -r_{11\ce{N\!E_1}} - r_{11\ce{N\!I_1}} & r_{11\ce{N\!I_1}} & r_{11\ce{N\!E_1}} & 0 & 0 & 0\\
 r_{12\ce{N_1}} & -r_{12\ce{D_2}} - r_{12\ce{N_1}} & 0 & 0 & 0 & r_{12\ce{D_2}}\\
 0 & 0 & -r_{13\ce{T_1}} & 0 & r_{13\ce{T_1}} & 0\\
 0 & 0 & 0 & -r_{21\ce{N\!E_2}} - r_{21\ce{N\!I_2}} & r_{21\ce{N\!I_2}} & r_{21\ce{N\!E_2}}\\
 0 & 0 & r_{22\ce{D_1}} & r_{22\ce{N_2}} & -r_{22\ce{D_1}} - r_{22\ce{N_2}} & 0\\
 0 & r_{23\ce{T_2}} & 0 & 0 & 0 & -r_{23\ce{T_2}}
\end{pmatrix},$}
\end{equation*}
which is \emph{weakly diagonally dominant} by rows, i.e., $|H_{ii}|\ge \sum_{j\neq i} |H_{ij}|$
with negative diagonal entries: $H_{ii}<0$ for any $i$. In \cite[Prop.~2]{BlanchiniGiordano21}, it was shown in full generality that $H=RS$ is weakly diagonally dominant with negative diagonal whenever the stoichiometric coefficients are $\{0,1\}$ only, and each species participates at most in two reactions, as it is the case here. \phantomsection\label{pt:translocation}Our analysis holds irrespective of whether or not one considers the formulation \ref{eq:modeli'} with an explicit translocation of the intracellular domain $\ce{N\!I}_j$ to the nucleus. Indeed, in \ref{eq:modeli'} all stoichiometric coefficients are in $\{0,1\}$, and each species participates in at most two reactions. For $j=1,2$, the species $\ce{N\!I}_j$ participates in reactions $(j1,j2'')$, while $\ce{N\!I^{Nucleus}}_j$ participates in reactions $(j2',j2'')$.

The Gershgorin disk theorem \cite{gerschgorin1931} implies that the eigenvalues must have nonpositive real parts, regardless of the choice of parameters. On the other hand, Lemma \ref{lem:CSexpansion} equates the capacity for a real-zero eigenvalue to the capacity for an eigenvalue with positive real part, since the determinant $a_5$ of the system reduced to any stoichiometric compatibility class is a multilinear homogeneous polynomial in the entries of the symbolic reactivity matrix $R$. Therefore, zero eigenvalues are also excluded and any steady-state is always locally stable. Finally, to exclude multistationarity, we use \cite[Theorem 3]{BaPa16} which states that multistationarity for consistent nondegenerate networks is equivalent to the capacity of the determinant to have a real zero eigenvalue. In conclusion, the system always admits only one locally stable steady state. \endproof

The proofs of Thm.~\ref{thm:12}, Thm.~\ref{thm:13} both follow the workflow introduced in Sec.~\ref{sec:preliminary}, which we repeat here in more detail. The first two steps are analogous to the proof of Thm.~\ref{thm:11}.

\emph{Step 1}. We will verify that each ODE system \eqref{eq:system12} and \eqref{eq:system13} admits a positive steady-state for a choice of monotone chemical functions. We will write the ODE system as   $\dot{x}=S\mathbf{r}(x).$ The existence of a positive steady-state is equivalent to the existence of a strictly positive kernel vector $v>0$ of the stoichiometric matrix $S$.

\emph{Step 2}: In order to {verify} that the network is nondegenerate, we compute the $n$ linearly independent conservation laws of the systems by providing a basis of the left kernel space of the $S$. Then, we give an invertible $k$-CS matrix, where $k=|M|-n$.

\emph{Step 3:}\label{step:3} Via the Python module \textsc{BiRNe} \cite{Golnik25}, we compute and analyze the characteristic polynomial of the symbolic Jacobian matrix $G=SR$. Via Lemma \ref{lem:CSexpansion}, we will look for all $k\times k$ CS-matrices $S[\pmb{\kappa}]$ such that
\begin{equation}\label{eq:signdetbadwf}
\operatorname{sign}\operatorname{det}S[\pmb{\kappa}]=(-1)^{k-1},
\end{equation}
and identify the \emph{unstable-positive feedbacks} as the minimal ones among those, i.e. the CS-matrices $S[\pmb{\kappa}]$ with \eqref{eq:signdetbadwf} and such that none of its principal submatrices $S[\pmb{\kappa}']$ satisfies 
$
\operatorname{sign}\operatorname{det}S[\pmb{\kappa}']=(-1)^{k'-1}.$
%{\color{red}BELOW one finally understands - for the first time - what a motif is. SO before one has at least to refer to this part. BETTER explain at least a motif much earlier.}
\textsc{BiRNe} scans the coefficients $a_i$ in the $g(\lambda)$ \eqref{eq:charpoly} starting from $a_0 = 1$. In each coefficient, the summands $\operatorname{det}S[\pmb{\kappa}]\prod_{X_m \in \kappa} R_{J(X_m)m}$ in \eqref{eq:CSexpansion} that satisfy condition \eqref{eq:signdetbadwf} are identified and labeled by their monomials $\prod_{X_m \in \kappa} R_{J(X_m)m}$. From this structure, a Hasse diagram \cite{baker_partial_1972} is constructed, where each vertex corresponds to a summand in $g(\lambda)$ fulfilling \eqref{eq:signdetbadwf}.
A directed edge between two vertices is drawn, if and only if, 
the monomial label of one summand is a subset of that of another, with the direction following the order of increasing indices $i$, i.e. from $a_i$ to $a_l$ with $i < l$. By this construction, the root vertices, i.e. those with no incoming edges, identify all the CS-matrices that are minimal w.r.t. the property of having exactly one real positive eigenvalue, which are exactlyt the unstable-positive feedbacks. %We will then check explicitly those candidates obtained via \textsc{BiRNe} and exclude any unstable principal submatrix. Since minimality w.r.t. real positive eigenvalues is guaranteed by construction, we will explicitly exclude any principal submatrix with complex eigenvalues with positive real part.
If no unstable-positive feedback is represented by a Metzler CS-matrix, we classify the networks as non-autocatalytic.

\emph{Step 4.} Finally, we enforce the kinetic symmetric, 
respectively \eqref{eq:constraints2} for \eqref{eq:system12} and \eqref{eq:constraints3} for \eqref{eq:system13}. We will assume that the steady-state is homogeneous, i.e. $\bar{|\ce{X}|}_1=\bar{|\ce{X}|}_2$ respectively for $\ce{X}=\{\ce{N\!E},\ce{N\!I},\ce{N},\ce{D}, \ce{T},\ce{C}\}$ in \eqref{eq:system12} and $\ce{X}=\{\ce{N\!E},\ce{N\!I},\ce{N},\ce{D}, \ce{T},\ce{B}\}$ in \eqref{eq:system13}. These two assumptions imply the symmetric constraint for the reactivities in $R$, $r_{j\ce{X}}=r_{\sigma(j)\sigma(\ce{X})},$ where $\sigma$ associates a reaction $1j$ to a the corresponding reaction $2j$, and a species $\ce{X}_1$ to the corresponding species $\ce{X}_2$ and vice versa. Again, we compute $g(\lambda)$ under such symmetric constraint, and show that we can solve $ a_{|M|-n}=0$,
where $a_{|M|-n}$ is the determinant of the Jacobian of the system reduced to one stoichiometric compatibility class.

\proof[Proof of Thm.~\ref{thm:12}]
\emph{Step 1.}
The stoichiometric matrix $S$ of  \ref{eq:modeli}+\ref{eq:modelii} reads:
\begin{equation}\label{eq:s12}
S=\scalebox{0.70}{$\begin{pmatrix}
         -1 & 0 & 1 & 0 & 0 & 0 & 0 & 0 &0 &0\\
         -1 & 1 & 0 & 0 & 0 & 0 & 0 & 0 &0 &0\\
         1 & -1 & 0 & -1 & 1 & 0 & 0 & 0&0 &0\\
         0 & 0 & 1 & - 1 & 1 & 0 & -1 & 0&0 &0\\
         0 & 0 & -1 & 0 & 0 & 0 & 1 & 0&0 &0\\
         0 & 0 & 0 & 1 & -1& 0 & 0 & 0&0 &0\\
         0 & 0 & 0 & 0 & 0& -1 & 0 & 1&0 &0\\
         0 & 0 & 0 & 0 & 0& -1 & 1 & 0&0 &0\\
         0 & 0 & 0 &0 & 0& 1 & -1 & 0 & -1 & 1\\
         0 & -1 & 0 &0 & 0& 0 & 0 & 1 & -1 & 1\\
         0 & 1 & 0 & 0 & 0&0 & 0 & -1&0 &0\\
         0 & 0 & 0 & 0 & 0&0 & 0 & 0 & 1 &-1
     \end{pmatrix}$,}
\end{equation}
from which we can compute that any positive vector $v$ such that $Sv=0$ is of the form $v=(k,k,k,h,h,k,k,k,l,l)$ for $k,h,l\in \mathbb{R}_{>0}$. 
In particular, the flux cone $\mathcal{F}:=\{v\in \mathbb{R}^{10}_{>0} \; | 
\; 
{Sv = 0}
\}$ is three dimensional, and thus nonempty, and the network is consistent and admits a positive steady-state.

\emph{Step 2.} We compute a basis for the left kernel $W$ of $S$ as:
\begin{equation*}
\begin{split}
    W=\operatorname{span}\langle
(0,1,1,0,0,1,0,0,0,0,0,0
),
(0,0,0,1,1,1,0,0,0,0,0,0
  ),\\
(1,0,1,0,1,1,1,0,1,0,1,1
    ),(0,0,0,0,0,0,0,1,1,0,0,1),(0,0,0,0,0,0,0,0,0,1,1,1
    )\rangle,
    \end{split}
\end{equation*}
and thus there are 5 linearly independent conservation laws ($n=5$), namely
\begin{equation*}
\begin{cases}
w_1 := \ce{N\!I_1}+\ce{N_1}+\ce{C_1},\\
w_2 := \ce{D_1}+\ce{T_1}+\ce{C_1},\\
w_3 := \ce{N\!E_1}+\ce{N_1}+\ce{T_1}+\ce{C_1}+\ce{N\!E_2}+\ce{N_2}+\ce{T_2}+\ce{C_2},\\
w_4 := \ce{N\!I_2}+\ce{N_2}+\ce{C_2},\\
w_5 := \ce{D_2}+\ce{T_2}+\ce{C_2}.
\end{cases}
\end{equation*}
All species appear in at least one of the conserved quantities, which guarantees that orbits of the system \eqref{eq:system12} cannot diverge and they are bounded. To show that the system 
is nondegenerate, it is enough to show that the coefficient $a_{|M|-n}=a_7$ of 
$g(\lambda)$ is 
non-identically zero. Via Lemma \ref{lem:CSexpansion}, this is in turn 
equivalent to the existence of at least one $7\times7$ invertible CS-matrix. 
{Again, we provide an arbitrary example. Consider} 
\begin{equation*}
    \pmb{\kappa}=\scalebox{0.70}{$(\{\ce{N\!E_1},\ce{N_1},\ce{T_1},\ce{C_1}\ce{N_2},\ce{T_2},\ce{C_2}\},\{11,12,13,15,22,23,25\},J(\ce{N\!E_1},\ce{N_1},\ce{T_1},\ce{C_1},\ce{N_2},\ce{T_2},\ce{C_2})=(11,12,13,15,22,23,25)).$}
\end{equation*}
Its associated CS-matrix $S[\pmb{\kappa}]$ reads:
\begin{equation*}
S[\pmb{\kappa}]=\scalebox{0.70}{$
    \begin{pmatrix}
        -1 & 0 &0 &0& 0 & 0 &0\\
        1 & -1 & 0 &1 & 0 & 0 &0\\
        0 & 0 &-1 & 0& 1 & 0 &0\\
        0 & 0 & 0 &-1 & 0 &0&0\\
        0 & 0 & 0 & 0&-1 & 0&1\\
        0 & 1 & 0 & 0&0 & -1&0\\
        0 & 0 & 0 & 0& 0 & 0 & -1
    \end{pmatrix}$},
\end{equation*}
which is invertible, since all eigenvalues are $-1$. The network is thus nondegenerate.

\emph{Step 3.} We compute the symbolic Jacobian $G=SR$ as the product of the stoichiometric matrix \eqref{eq:s12} and
\begin{equation*}
R=\scalebox{0.70}{$\begin{pmatrix}
r_{11\ce{N\!E}_1} & r_{11\ce{N\!I}_1} & 0 & 0 & 0 & 0 & 0 & 0 & 0 & 0 & 0 & 0\\
0 & 0 & r_{12\ce{N}_1} & 0 & 0 & 0 & 0 & 0 & 0 & r_{12\ce{D}_2} & 0 & 0\\
0 & 0 & 0 & 0 & r_{13\ce{T}_1} & 0 & 0 & 0 & 0 & 0 & 0 & 0\\
0 & 0 & r_{14\ce{N}_1} & r_{14\ce{D}_1} & 0 & 0 & 0 & 0 & 0 & 0 & 0 & 0\\
0 & 0 & 0 & 0 & 0 & r_{15\ce{C}_1} & 0 & 0 & 0 & 0 & 0 & 0\\
0 & 0 & 0 & 0 & 0 & 0 & r_{21\ce{N\!E}_2} & r_{21\ce{N\!I}_2} & 0 & 0 & 0 & 0\\
0 & 0 & 0 & r_{22\ce{D}_1} & 0 & 0 & 0 & 0 & r_{22\ce{N}_2} & 0 & 0 & 0\\
0 & 0 & 0 & 0 & 0 & 0 & 0 & 0 & 0 & 0 & r_{23\ce{T}_2} & 0\\
0 & 0 & 0 & 0 & 0 & 0 & 0 & 0 & r_{24\ce{N}_2} & r_{24\ce{D}_2} & 0 & 0\\
0 & 0 & 0 & 0 & 0 & 0 & 0 & 0 & 0 & 0 & 0 & r_{25\ce{C}_2}
\end{pmatrix}$,}
\end{equation*}
which we do not write down here explicitly for the 
sake of space. Then we compute the characteristic polynomial of $G$ and, 
with the expansion in Lemma \ref{lem:CSexpansion}, we look for unstable-positive feedback.
We use the \textsc{BiRNe} Python module \cite{Golnik25}, which takes as input a list of reactions and produces as output the list of CS-matrices $S[\pmb{\kappa}]$ that are minimal with the property of having one real positive eigenvalue.  There are exactly six CS-triples ($\pmb{\kappa}_1$, $\pmb{\kappa}_2$, $\pmb{\kappa}_3$, $\pmb{\kappa}_4$, $\pmb{\kappa}_5$, $\pmb{\kappa}_6$) whose CS-matrices are unstable-positive feedbacks:
{\small
\begin{equation*}
\begin{cases}
    \kappa_1=\{\ce{N\!I}_1,\ce{N}_1,\ce{D}_1,\ce{N}_2,\ce{D}_2\},\; E_{\kappa_1}=\{ 11,12,14,22,24\}, \; J(\kappa_1)=\{ 11,12,14,22,24\}\\
    \kappa_2=\{\ce{N}_1,\ce{D}_1,\ce{N\!I}_2,\ce{N}_2,\ce{D}_2\},\; E_{\kappa_2}=\{12,14,21,22,24\},\; J(\kappa_2)=\{ 12,14,21,22,24\}\\
    \kappa_3= \{\ce{N}_1,\ce{D}_1,\ce{T}_1,\ce{N}_2,\ce{D}_2\},\; E_{\kappa_3}=\{12,13,14,22,24\},\; J(\kappa_3)=\{14,22,13,24,12\}\\
    \kappa_4=\{\ce{N}_1,\ce{D}_1,\ce{N}_2,\ce{D}_2,\ce{T}_2\},\; E_{\kappa_4}=\{12,14,22,23,24\},\; J(\kappa_4)=\{14,22,24,12,23\}\\
    \kappa_5=\{\ce{N\!I}_1,\ce{N}_1,\ce{D}_1,\ce{N\!E}_2,\ce{N}_2,\ce{T}_2 \},\; E_{\kappa_5}=\{11,12,14,21,22,23\}, \;J(\kappa_5)=\{11,12,14,21,22,23\}\\
    \kappa_6=\{\ce{N\!E}_1,\ce{N}_1,\ce{T}_1,\ce{N\!I}_2,\ce{N}_2,\ce{D}_2\}, \; E_{\kappa_6}=\{11,12,13,21,22,24\},\; J(\kappa_6)=\{11,12,13,21,22,24\}\\
\end{cases}
\end{equation*}}
{The associated CS-matrices read
\begin{equation*}
    S[\pmb{\kappa}_1]=\scalebox{0.70}{$\begin{blockarray}{cccccc}
& 11 & 12 & 14 & 22 & 24\\
\begin{block}{c(ccccc)}
\ce{N\!I_1} & -1 & 1 & 0 & 0 & 0\\
\ce{N_1} & 1 & -1 & -1 & 0 & 0\\
\ce{D}_{1} & 0 & 0 & -1 & -1 & 0\\
\ce{N_2} & 0 & 0 & 0 & -1 & -1\\
\ce{D}_{2} & 0 & -1 & 0 & 0 & -1\\
\end{block}
\end{blockarray}$}\quad\quad S[\pmb{\kappa}_2]=\scalebox{0.70}{$\begin{blockarray}{cccccc}
& 12 & 14 & 21 & 22 & 24\\
\begin{block}{c(ccccc)}
\ce{N_1}      & -1 & -1 & 0 & 0 & 0\\
\ce{D_1}      & 0 & -1 & 0 & -1 & 0\\
\ce{N\!I}_{2} & 0 & 0 & -1 & 1 & 0\\
\ce{N_2}      & 0 & 0 & 1 & -1 & -1\\
\ce{D}_{2}    & -1 & 0 & 0 & 0 & -1\\
\end{block}
\end{blockarray}$}
\end{equation*}
\begin{equation*}
S[\pmb{\kappa}_3]=\scalebox{0.70}{$
\begin{blockarray}{cccccc}
& 14 & 22 & 13 & 24 & 12\\
\begin{block}{c(ccccc)}
\ce{N_1} & -1 & 0 & 0 & 0 & -1\\
\ce{D}_{1} & -1 & -1 & 1 & 0 & 0\\
\ce{T_1} & 0 & 1 & -1 & 0 & 0\\
\ce{N_2} & 0 & -1 & 0 & -1 & 0\\
\ce{D}_{2} & 0 & 0 & 0 & -1 & -1\\
\end{block}
\end{blockarray}$}\quad\quad S[\pmb{\kappa}_4]=\scalebox{0.70}{$\begin{blockarray}{cccccc}
& 14 & 22 & 24 & 12 & 23\\
\begin{block}{c(ccccc)}
\ce{N_1}   & -1 & 0 & 0 & -1 & 0\\
\ce{D}_{1} & -1 & -1 & 0 & 0 & 0\\
\ce{N_2}   & 0 & -1 & -1 & 0 & 0\\
\ce{D}_{2} & 0 & 0 & -1 & -1 & 1\\
\ce{T_2} &  0 & 0 & 0 & 1 & -1\\
\end{block}
\end{blockarray}$}
\end{equation*}
\begin{equation*}
S[\pmb{\kappa}_5]=\scalebox{0.70}{$\begin{blockarray}{ccccccc}
& 11 & 12 & 14 & 21 & 22 & 23\\
\begin{block}{c(cccccc)}
\ce{N\!I_1} & -1 & 1 & 0 & 0 & 0 & 0\\
\ce{N_1} & 1 & -1 & -1 & 0 & 0 & 0\\
\ce{D}_{1} & 0 & 0 & -1 & 0 & -1 & 0\\
\ce{N\!E_2} & 0 & 0 & 0 & -1 & 0 & 1\\
\ce{N_2} & 0 & 0 & 0 & 1 & -1 & 0\\
\ce{T_2} & 0 & 1 & 0 & 0 & 0 & -1\\
\end{block}
\end{blockarray}$}\quad S[\pmb{\kappa}_6]=\scalebox{0.70}{$\begin{blockarray}{ccccccc}
& 11 & 12 & 13 & 21 & 22 & 24\\
\begin{block}{c(cccccc)}
\ce{N\!E_1} & -1 & 0 & 1 & 0 & 0 & 0\\
\ce{N_1}    & 1 & -1 & 0 & 0 & 0 & 0\\
\ce{T}_{1}  & 0 & 0 & -1 & 0 & 1 & 0\\
\ce{N\!I_2} & 0 & 0 & 0 & -1 & 1 & 0\\
\ce{N_2}    & 0 & 0 & 0 & 1 & -1 & -1\\
\ce{D_2}    & 0 & -1 & 0 & 0 & 0 & -1\\
\end{block}
\end{blockarray}$}
\end{equation*}
%Above, we have excluded unstable principal submatrices with complex conjugated eigenvalues by direct computation, omitted here for brevity.
Note that the pairs $(\pmb{\kappa}_1,\pmb{\kappa}_2)$, $(\pmb{\kappa}_3,\pmb{\kappa}_4)$, $(\pmb{\kappa}_5,\pmb{\kappa}_6)$ are the ones related by symmetry. The reactions and 
species, which appear in such CS-triples, respectively constitute the instability 
motifs \eqref{eq:motif12a}, \eqref{eq:motif12b}, \eqref{eq:motif12c}. As no matrix $S[\pmb{\kappa}_i]$, $i=1,...6$, is Metzler, i.e. they all possess negative off-diagonal entries, the network is non-autocatalytic. %{\color{red}SAY more here already or  refer to a better explanation.}

\emph{Step 4.} We assume the kinetic symmetry condition \eqref{eq:constraints2} at a \emph{homogeneous} steady state, i.e. where $|\ce{X}_1| = |\ce{X}_2|$ for all $\ce{X} \in \{\ce{N\!E}, \ce{N\!I}, \ce{N}, \ce{D}, \ce{T}, \ce{C}\}$.
Then, there is no quantitative distinction between the two cells.
To simplify notation, we denote the reaction rates as $r_j := r_{1j} = r_{2j}$ for $j \in \{1,2,3,4,5\}$, and $|X| := |X_1| = |X_2|$.
This yields the following symbolic reactivity matrix $R_\sigma$ under kinetic symmetry:
\begin{equation*}
R_\sigma= \scalebox{0.70}{$\begin{pmatrix}
r_{1\ce{N\!E}} & r_{1\ce{N\!I}} & 0 & 0 & 0 & 0 & 0 & 0 & 0 & 0 & 0 & 0\\
0 & 0 & r_{2\ce{N}} & 0 & 0 & 0 & 0 & 0 & 0 & r_{2\ce{D}} & 0 & 0\\
0 & 0 & 0 & 0 & r_{3\ce{T}} & 0 & 0 & 0 & 0 & 0 & 0 & 0\\
0 & 0 & r_{4\ce{N}} & r_{4\ce{D}} & 0 & 0 & 0 & 0 & 0 & 0 & 0 & 0\\
0 & 0 & 0 & 0 & 0 & r_{5\ce{C}} & 0 & 0 & 0 & 0 & 0 & 0\\
0 & 0 & 0 & 0 & 0 & 0 & r_{1\ce{N\!E}} & r_{1\ce{N\!I}} & 0 & 0 & 0 & 0\\
0 & 0 & 0 & r_{2\ce{D}} & 0 & 0 & 0 & 0 & r_{2\ce{N}} & 0 & 0 & 0\\
0 & 0 & 0 & 0 & 0 & 0 & 0 & 0 & 0 & 0 & r_{3\ce{T}} & 0\\
0 & 0 & 0 & 0 & 0 & 0 & 0 & 0 & r_{4\ce{N}} & r_{4\ce{D}} & 0 & 0\\
0 & 0 & 0 & 0 & 0 & 0 & 0 & 0 & 0 & 0 & 0 & r_{5\ce{C}}
\end{pmatrix}$},
\end{equation*}
where due to the $\mathbb{Z}_2$-symmetry, each symbol appears exactly two times. The symbolic Jacobian $G_\sigma=SR_\sigma$, at kinetic symmetry, is 
\begin{equation*}
\scalebox{0.70}{$\begin{pmatrix}
 -r_{1\ce{N\!E}} & -r_{1\ce{N\!I}} & 0 & 0 & r_{3\ce{T}} & 0 & 0 & 0 & 0 & 0 & 0 & 0\\
 -r_{1\ce{N\!E}} & -r_{1\ce{N\!I}} & r_{2\ce{N}} & 0 & 0 & 0 & 0 & 0 & 0 & r_{2\ce{D}} & 0 & 0\\
  r_{1\ce{N\!E}} &  r_{1\ce{N\!I}} & -r_{2\ce{N}} - r_{4\ce{N}} & -r_{4\ce{D}} & 0 & r_{5\ce{C}} & 0 & 0 & 0 & -r_{2\ce{D}} & 0 & 0\\
  0 & 0 & -r_{4\ce{N}} & -r_{2\ce{D}} - r_{4\ce{D}} & r_{3\ce{T}} & r_{5\ce{C}} & 0 & 0 & -r_{2\ce{N}} & 0 & 0 & 0\\
  0 & 0 & 0 & r_{2\ce{D}} & -r_{3\ce{T}} & 0 & 0 & 0 & r_{2\ce{N}} & 0 & 0 & 0\\
  0 & 0 & r_{4\ce{N}} & r_{4\ce{D}} & 0 & -r_{5\ce{C}} & 0 & 0 & 0 & 0 & 0 & 0\\
  0 & 0 & 0 & 0 & 0 & 0 & -r_{1\ce{N\!E}} & -r_{1\ce{N\!I}} & 0 & 0 & r_{3\ce{T}} & 0\\
  0 & 0 & 0 & r_{2\ce{D}} & 0 & 0 & -r_{1\ce{N\!E}} & -r_{1\ce{N\!I}} & r_{2\ce{N}} & 0 & 0 & 0\\
  0 & 0 & 0 & -r_{2\ce{D}} & 0 & 0 & r_{1\ce{N\!E}} & r_{1\ce{N\!I}} & -r_{2\ce{N}} - r_{4\ce{N}} & -r_{4\ce{D}} & 0 & r_{5\ce{C}}\\
  0 & 0 & -r_{2\ce{N}} & 0 & 0 & 0 & 0 & 0 & -r_{4\ce{N}} & -r_{2\ce{D}} - r_{4\ce{D}} & r_{3\ce{T}} & r_{5\ce{C}}\\
  0 & 0 & r_{2\ce{N}} & 0 & 0 & 0 & 0 & 0 & 0 & r_{2\ce{D}} & -r_{3\ce{T}} & 0\\
  0 & 0 & 0 & 0 & 0 & 0 & 0 & 0 & r_{4\ce{N}} & r_{4\ce{D}} & 0 & -r_{5\ce{C}}
\end{pmatrix}$}
\end{equation*}
We compute the characteristic polynomial $g_\sigma(\lambda)$ of $G_\sigma$. Since the network is nondegenerate, the determinant of the Jacobian matrix reduced to a stoichiometric compatibility class is interpreted as the coefficient $a_{|M|-n}=a_{|12|-5}=a_{7}$, which reads:
\begin{equation*}
\begin{split}
    a_7=
&2\,r_{3\ce{T}}\,r_{1\ce{N\!E}}
\bigl(r_{2\ce{D}}r_{4\ce{N}} + r_{4\ce{D}}r_{2\ce{N}} 
+ r_{5\ce{C}}r_{2\ce{N}}\bigr)\cdot \dots\\
&\dots \cdot \bigl(
r_{4\ce{D}}r_{2\ce{N}}r_{3\ce{T}}
- r_{2\ce{D}}r_{4\ce{N}}r_{3\ce{T}}
+ r_{5\ce{C}}r_{2\ce{N}}r_{3\ce{T}}
+ r_{2\ce{D}}r_{5\ce{C}}r_{1\ce{N\!E}}
+ r_{2\ce{D}}r_{5\ce{C}}r_{1\ce{N\!I}}
+ r_{5\ce{C}}r_{2\ce{N}}r_{1\ce{N\!E}}\\
&+ r_{2\ce{D}}r_{4\ce{N}}r_{1\ce{N\!I}}
- r_{4\ce{D}}r_{2\ce{N}}r_{1\ce{N\!I}}
+ r_{4\ce{D}}r_{3\ce{T}}r_{1\ce{N\!E}}
+ r_{5\ce{C}}r_{3\ce{T}}r_{1\ce{N\!E}}
+ r_{4\ce{D}}r_{3\ce{T}}r_{1\ce{N\!I}}
+ r_{5\ce{C}}r_{3\ce{T}}r_{1\ce{N\!I}}
\\&
+ r_{4\ce{N}}r_{3\ce{T}}r_{1\ce{N\!E}}
+ r_{4\ce{N}}r_{3\ce{T}}r_{1\ce{N\!I}}
\bigr).
\end{split}
\end{equation*}
Since the factor is a multilinear homogeneous polynomials in the symbols $r_{j\ce{X}}$ with monomials having coefficients of both positive and negative sign, there exists a choice of symbols such that $a_7=0$, and thus the system has the capacity for a zero-eigenvalue bifurcation and differentiation. To see this last statement, it suffices to apply the intermediate value theorem. Indeed, for e.g. $(r_{4\ce{D}},r_{2\ce{N}},r_{3\ce{T}})$ big enough, $a_7$ is positive, while for e.g. $(r_{2\ce{D}},r_{4\ce{N}},r_{3\ce{T}})$ big enough, $a_7$ is negative, thus there exists a value in between for which $a_7$ changes sign.
\endproof

\proof[Proof of Thm.~\ref{thm:13}]
\emph{Step 1.}
The stoichiometric matrix $S$ of  
\ref{eq:modeliii} reads:
\begin{equation}\label{eq:s13}
   S=\scalebox{0.70}{$\begin{pmatrix}
         -1 & 0 & 0 & 0 & 0& 1 & 0 & 0 & 0 & 0 & 0 & 0 \\
         -1 & 1 & 0 & 0 & 0& 0 & 0 & 0 & 0& 0 & 0 & 0 \\
         1 & -1 & -1 & 1 & 0&0 & 0 & 0 & 0& 0 & 0 & 0 \\
         0 & 0 & 0 & 0 & -1&1 & 0 & 0& -1 & 1 & 0  & 0\\
         0 & 0 & 0 & 0 & 1&0 & 0 & -1 & 0& 0 & 0 & 0 \\
         0 & 0 & 0 & 0 & 0&-1 & 0 & 1 & 0& 0 & 0 & 0 \\
         0 & 0 &0 & 0 & 0& 0 & 0 & 0 & 1 & -1 & 0 & 0\\
         0 & 0 & 0 & 0 & 0&0 & -1 & 0 & 0 & 0 & 0 & 1\\
         0 & 0 &0 & 0 & 0& 0 & -1 & 1 & 0& 0 & 0 & 0 \\
         0 & 0 &0 & 0 & 0& 0 & 1 & -1 & -1 & 1 & 0 & 0 \\
         0 & 0 &-1 & 1 & 0& 0 & 0 & 0 & 0 & 0 & -1 & 1\\
        0 & -1 &0 & 0 & 0& 0 & 0 & 0 & 0& 0 & 1 & 0 \\
        0 & 1 & 0 & 0 & 0&0 & 0 & 0 & 0 & 0 & 0 & -1\\
         0 & 0 & 1 & -1 & 0&0 & 0 & 0 & 0& 0 & 0 & 0 
     \end{pmatrix}$}.
\end{equation}
from which we can compute that any positive vector $v$ such that
$Sv=0$
is of the form $v=(k,k,h,h,k,k,k,k,l,l,k,k)$ for $k,h,l\in \mathbb{R}_{>0}$. 
In particular, the flux cone 
$\mathcal{F}:=\{v\in \mathbb{R}^{12}_{>0} \; | \; 
%\text{\eqref{eq:fluxcone3} holds}
Sv=0 \}$ is three dimensional, and thus nonempty, and the network is consistent and admits a positive steady-state.

\emph{Step 2.} We compute a basis for the left kernel $W$ of $S$ as:
\begin{equation*}
\begin{split}
    W=\operatorname{span}\langle
    &\begin{pmatrix}
    0,0,0,1,1,1,1,0,0,0,0,0,0,0
    \end{pmatrix},
    \begin{pmatrix}
    0,1,1,0,0,0,0,0,0,0,0,0,0,1
    \end{pmatrix},\\
    &\begin{pmatrix}
    1,0,1,0,0,1,1,1,0,1,0,0,1,1
    \end{pmatrix},
    \begin{pmatrix}
    0,0,0,0,0,0,0,0,0,0,1,1,1,1
    \end{pmatrix},\\
    &\begin{pmatrix}
    0,0,0,0,0,0,1,0,1,1,0,0,0,0
    \end{pmatrix}\rangle.
    \end{split}
\end{equation*}
Thus there are 5 linearly independent conservation laws ($n=5$), namely
\begin{equation*}
\begin{cases}
w_1 := \ce{D_{s1}}  +   \ce{D_{1}}     + \ce{T_1}  +   \ce{B_1},\\
w_2 :=  \ce{N\!I_1}   +  \ce{N_1}   +     \ce{B_2},\\
w_3 := \ce{N\!E_1}  +     \ce{N_1} +   \ce{T_1} + \ce{B_1}  + \ce{N\!E_2}    +\ce{N_2} + \ce{T_2} + \ce{B_2},\\
w_4 := \ce{D_{s2}}+\ce{D_{2}}+\ce{T_2}+\ce{B_2}\\
w_5 := \ce{B_1}+\ce{N\!I_2}   +  \ce{N_2} .
\end{cases}
\end{equation*}
All species appear in at least one of the conserved quantities, which guarantees that orbits of the system \eqref{eq:system13} cannot diverge and they are bounded. Now we show that the coefficient $a_{|M|-n}=a_9$ of $g(\lambda)$ is non-identically zero. Via Lemma \ref{lem:CSexpansion}, this is in turn equivalent to the existence of at least one $9\times9$ invertible CS-matrix. We provide an arbitrary example: consider the CS-triple 
\begin{equation*}
\begin{split}
    \pmb{\kappa}=(&\{\ce{N\!E_1},\ce{N_1},\ce{D_{s1}},\ce{T_1},\ce{B_1},\ce{N_2},\ce{D_{s2}},\ce{T_2},\ce{B_2}\},\{11,12,17,18,19,22,27,28,29\},\\&J(\ce{N\!E_1},\ce{N_1},\ce{D_{s1}},\ce{T_1},\ce{B_1},\ce{N_2},\ce{D_{s2}},\ce{T_2},\ce{B_2})=(11,12,18,19,27,22,28,29,17)).
    \end{split}
\end{equation*}
Its associated CS-matrix $S[\pmb{\kappa}]$ reads:
\begin{equation}
S[\pmb{\kappa}]=\scalebox{0.70}{$
    \begin{pmatrix}
       -1 & 0 &0 & 1 & 0 & 0 &0 & 0 &0\\
       1 & -1 & 0 &0 & 0 & 0 &0& 0 & 1\\
       0 & 0 &-1 & 1& 1 & 0 &0 &0 & 0\\
       0 & 0 & 0 &-1 & 0 &1&0 & 0 & 0\\
       0 & 0 & 0 & 0&-1 & 0&0 &0 & 0\\
       0 & 0 & 0 & 0&1 & -1&0 & 0 & 0\\
     0 & 0 & 0 & 0& 0 & 0 & -1 & 1 & 1\\
        0 & 1 &0 & 0 &0 & 0 & 0 & -1 & 0\\
    0 & 0 & 0 & 0&0 & 0 & 0 & 0 & -1\\
 \end{pmatrix}$},
\end{equation}
which is invertible since all eigenvalues are $-1$. The network is thus nondegenerate.

\emph{Step 3.} As in the proof of Thm.~\ref{thm:12}, we compute the symbolic Jacobian $G=SR$ as the product between the stoichiometric matrix \eqref{eq:s13} and:
\begin{equation*}
R=\scalebox{0.70}{$
\begin{pmatrix}
r_{11\ce{N\!E_1}} & r_{11\ce{N\!I_1}} & 0 & 0 & 0 & 0 & 0 & 0 & 0 & 0 & 0 & 0 & 0 & 0\\
0 & 0 & r_{12\ce{N_1}} & 0 & 0 & 0 & 0 & 0 & 0 & 0 & 0 & r_{12\ce{D_2}} & 0 & 0\\
0 & 0 & r_{16\ce{N_1}} & 0 & 0 & 0 & 0 & 0 & 0 & 0 & r_{16\ce{D_{s2}}} & 0 & 0 & 0\\
0 & 0 & 0 & 0 & 0 & 0 & 0 & 0 & 0 & 0 & 0 & 0 & 0 & r_{17\ce{B_2}}\\
0 & 0 & 0 & r_{18\ce{D_{s1}}} & 0 & 0 & 0 & 0 & 0 & 0 & 0 & 0 & 0 & 0\\
0 & 0 & 0 & 0 & 0 & r_{19\ce{T_1}} & 0 & 0 & 0 & 0 & 0 & 0 & 0 & 0\\
0 & 0 & 0 & 0 & 0 & 0 & 0 & r_{21\ce{N\!E_2}} & r_{21\ce{N\!I_2}} & 0 & 0 & 0 & 0 & 0\\
0 & 0 & 0 & 0 & r_{22\ce{D_1}} & 0 & 0 & 0 & 0 & r_{22\ce{N_2}} & 0 & 0 & 0 & 0\\
0 & 0 & 0 & r_{26\ce{D_{s1}}} & 0 & 0 & 0 & 0 & 0 & r_{26\ce{N_2}} & 0 & 0 & 0 & 0\\
0 & 0 & 0 & 0 & 0 & 0 & r_{27\ce{B_1}} & 0 & 0 & 0 & 0 & 0 & 0 & 0\\
0 & 0 & 0 & 0 & 0 & 0 & 0 & 0 & 0 & 0 & r_{28\ce{D_{s2}}} & 0 & 0 & 0\\
0 & 0 & 0 & 0 & 0 & 0 & 0 & 0 & 0 & 0 & 0 & 0 & r_{29\ce{T_2}} & 0
\end{pmatrix}.$}
\end{equation*}
{Again, we omit the} explicit form of $G$ and 
compute the characteristic polynomial of $G$ and, using the 
expansion in Lemma \ref{lem:CSexpansion}, we look for unstable-positive 
feedback. With the \textsc{BiRNe} Python module \cite{Golnik25}, we find that there are exactly two CS-triples ($\pmb{\kappa}_1$, $\pmb{\kappa}_2$) whose CS-matrices are unstable-positive feedbacks:
\begin{equation*}
\begin{cases}
\pmb{\kappa}_1=(\kappa_1=\{\ce{D_{s1}},\ce{D_1},\ce{T_1},\ce{N_2}\},\;E_{\kappa_1}=\{18,19,22,26\},\;J(\kappa_1)=\{18,22,19,26\})\\
\pmb{\kappa}_2=(\kappa_2=\{\ce{N_1},\ce{D_{s2}},\ce{D_2},\ce{T_2}\},\;E_{\kappa_2}=\{12,16,28,29\}, \;J(\kappa_2)=\{16,28,12,29\})
\end{cases}
\end{equation*}
with associated {CS-matrices}
\begin{equation*}
S[\pmb{\kappa}_1]=\scalebox{0.70}{$
\begin{blockarray}{ccccc}
& 18 & 22 & 19 & 26 \\
\begin{block}{c(cccc)}
\ce{D_{s1}} & -1 & 0 & 1 & -1 \\
\ce{D_{1}} & 1 & -1 & 0 & 0 \\
\ce{T_1} & 0 & 1 & -1 & 0\\
\ce{N_2} & 0 & -1 & 0 & -1\\
\end{block}
\end{blockarray}$}\quad\quad\quad 
S[\pmb{\kappa}_2]=\scalebox{0.70}{$
\begin{blockarray}{ccccc}
& 16 & 28 & 12 & 29 \\
\begin{block}{c(cccc)}
\ce{N_1}   & -1 & 0 & -1 & 0 \\
\ce{D_{s2}}& -1 & -1 & 0 & 1  \\
\ce{D_{2}} & 0 & 1 & -1 & 0  \\
\ce{T_2}  & 0 & 0 & 1 & -1 \\
\end{block}
\end{blockarray}$}
\end{equation*}
The pair of CS $(\pmb{\kappa}_1,\pmb{\kappa}_2)$ is related by the symmetry. By selecting the corresponding reactions and species in the network, which appear in such CS-triples, we get the instability motif \eqref{eq:motif3}. As both $S[\pmb{\kappa}_1]$ and $S[\pmb{\kappa}_2]$ possess negative off-diagonal entries, they are non-Metzler, and thus the network is non-autocatalytic.

\emph{Step 4.} 
We assume the kinetic symmetry condition \eqref{eq:constraints2} at a 
\emph{homogeneous} steady state, i.e. where 
$|\ce{X}_1|=|\ce{X}_2|$ for $\ce{X}\in\{\ce{N\!E},\ce{N\!I},\ce{N},\ce{D_s},\ce{D},\ce{T},\ce{B}\}.$ 
Again, we denote the reaction rates as $r_j := r_{1j} = r_{2j}$ for $j \in \{1,2,6,7,8,9\}$, and the species concentrations as $|\ce{X}| := |\ce{X}_1| = |\ce{X}_2|$.
This yields the following symbolic reactivity matrix:
\begin{equation*}
R_\sigma=\scalebox{0.70}{$\begin{pmatrix}
r_{1\ce{N\!E}} & r_{1\ce{N\!I}} & 0 & 0 & 0 & 0 & 0 & 0 & 0 & 0 & 0 & 0 & 0 & 0\\
0 & 0 & r_{2\ce{N}} & 0 & 0 & 0 & 0 & 0 & 0 & 0 & 0 & r_{2\ce{D}} & 0 & 0\\
0 & 0 & r_{6\ce{N}} & 0 & 0 & 0 & 0 & 0 & 0 & 0 & r_{6\ce{D_{s}}} & 0 & 0 & 0\\
0 & 0 & 0 & 0 & 0 & 0 & 0 & 0 & 0 & 0 & 0 & 0 & 0 & r_{7\ce{B}}\\
0 & 0 & 0 & r_{8\ce{D_{s}}} & 0 & 0 & 0 & 0 & 0 & 0 & 0 & 0 & 0 & 0\\
0 & 0 & 0 & 0 & 0 & r_{9\ce{T}} & 0 & 0 & 0 & 0 & 0 & 0 & 0 & 0\\
0 & 0 & 0 & 0 & 0 & 0 & 0 & r_{1\ce{N\!E}} & r_{1\ce{N\!I}} & 0 & 0 & 0 & 0 & 0\\
0 & 0 & 0 & 0 & r_{2\ce{D}} & 0 & 0 & 0 & 0 & r_{2\ce{N}} & 0 & 0 & 0 & 0\\
0 & 0 & 0 & r_{6\ce{D_{s}}} & 0 & 0 & 0 & 0 & 0 & r_{6\ce{N}} & 0 & 0 & 0 & 0\\
0 & 0 & 0 & 0 & 0 & 0 & r_{7\ce{B}} & 0 & 0 & 0 & 0 & 0 & 0 & 0\\
0 & 0 & 0 & 0 & 0 & 0 & 0 & 0 & 0 & 0 & r_{8\ce{D_{s}}} & 0 & 0 & 0\\
0 & 0 & 0 & 0 & 0 & 0 & 0 & 0 & 0 & 0 & 0 & 0 & r_{9\ce{T}} & 0
\end{pmatrix}$}.
\end{equation*}
Due to the $\mathbb{Z}_2$-symmetry, each symbol appears now exactly two times. From $R_\sigma$, the symbolic Jacobian $G_\sigma=SR_\sigma$, at kinetic symmetry, is computed as
{\tiny
\setlength{\arraycolsep}{2pt}
\begin{equation*}
\begin{pmatrix}
 -r_{1NE} & -r_{1NI} & 0 & 0 & 0 & r_{9T} & 0 & 0 & 0 & 0 & 0 & 0 & 0 & 0\\
 -r_{1NE} & -r_{1NI} & r_{2N} & 0 & 0 & 0 & 0 & 0 & 0 & 0 & 0 & r_{2D} & 0 & 0\\
  r_{1NE} &  r_{1NI} & -r_{2N} - r_{6N} & 0 & 0 & 0 & 0 & 0 & 0 & 0 & -r_{6Ds} & -r_{2D} & 0 & r_{7B}\\
  0 & 0 & 0 & -r_{6Ds} - r_{8Ds} & 0 & r_{9T} & r_{7B} & 0 & 0 & -r_{6N} & 0 & 0 & 0 & 0\\
  0 & 0 & 0 & r_{8Ds} & -r_{2D} & 0 & 0 & 0 & 0 & -r_{2N} & 0 & 0 & 0 & 0\\
  0 & 0 & 0 & 0 & r_{2D} & -r_{9T} & 0 & 0 & 0 & r_{2N} & 0 & 0 & 0 & 0\\
  0 & 0 & 0 & r_{6Ds} & 0 & 0 & -r_{7B} & 0 & 0 & r_{6N} & 0 & 0 & 0 & 0\\
  0 & 0 & 0 & 0 & 0 & 0 & 0 & -r_{1NE} & -r_{1NI} & 0 & 0 & 0 & r_{9T} & 0\\
  0 & 0 & 0 & 0 & r_{2D} & 0 & 0 & -r_{1NE} & -r_{1NI} & r_{2N} & 0 & 0 & 0 & 0\\
  0 & 0 & 0 & -r_{6Ds} & -r_{2D} & 0 & r_{7B} & r_{1NE} & r_{1NI} & -r_{2N} - r_{6N} & 0 & 0 & 0 & 0\\
  0 & 0 & -r_{6N} & 0 & 0 & 0 & 0 & 0 & 0 & 0 & -r_{6Ds} - r_{8Ds} & 0 & r_{9T} & r_{7B}\\
  0 & 0 & -r_{2N} & 0 & 0 & 0 & 0 & 0 & 0 & 0 & r_{8Ds} & -r_{2D} & 0 & 0\\
  0 & 0 & r_{2N} & 0 & 0 & 0 & 0 & 0 & 0 & 0 & 0 & r_{2D} & -r_{9T} & 0\\
  0 & 0 & r_{6N} & 0 & 0 & 0 & 0 & 0 & 0 & 0 & r_{6Ds} & 0 & 0 & -r_{7B}
\end{pmatrix}
\end{equation*}}

We compute the characteristic polynomial $g_\sigma(\lambda)$ of $G_\sigma$. Since the network is nondegenerate, the Jacobian of the system reduced to a stoichiometric compatibility class is interpreted as the coefficient $a_{|M|-n}=a_{|14|-5}=a_{9}$, which reads:
\begin{equation*}
\begin{split}
{   a_9=}&
2 r_{9T} r_{1NE} r_{8Ds}
\left( r_{7B} r_{2N} - r_{2D} r_{6N} \right)\cdot\dots  \\
& \dots\cdot \Big(
r_{2D} r_{7B} r_{9T} r_{1NE}
+ r_{2D} r_{7B} r_{9T} r_{1NI}
+ r_{2D} r_{6N} r_{9T} r_{1NE}
+ r_{2D} r_{6N} r_{9T} r_{1NI}
\\
&- r_{2D} r_{6N} r_{9T} r_{8Ds}
+ r_{2D} r_{7B} r_{1NE} r_{8Ds}
+ r_{2D} r_{7B} r_{1NI} r_{8Ds}
+ r_{7B} r_{2N} r_{1NE} r_{8Ds}\\&
+ r_{2D} r_{6N} r_{1NI} r_{8Ds}
+ r_{2D} r_{9T} r_{1NE} r_{6Ds}
+ r_{2D} r_{9T} r_{1NI} r_{6Ds}
+ r_{7B} r_{9T} r_{1NE} r_{8Ds}\\&
+ r_{7B} r_{9T} r_{1NI} r_{8Ds}
+ r_{2N} r_{9T} r_{1NE} r_{6Ds}
+ r_{2N} r_{9T} r_{1NI} r_{6Ds}\\&
+ r_{7B} r_{2N} r_{9T} r_{8Ds} + r_{6N} r_{9T} r_{1NE} r_{8Ds}
+ r_{6N} r_{9T} r_{1NI} r_{8Ds}
\Big).
\end{split}
\end{equation*}
From the first factor in brackets, we see that $a_9=0$ and 
thus a zero eigenvalue bifurcation occurs and with this the capacity for differentiation if $r_{7B} r_{2N} = r_{2D} r_{6N}$.
\endproof

\subsection*{Proofs of Sec.~\ref{sec:minimal}}

\proof[Proof of Prop.~\ref{prop:3}]
The network is consistent by choosing $r_1 = r_2$. %  At kinetic symmetry, identical concentrations and thus for homegeneous steady-state identical concentrations of $[\ce{N\!I_1}]=[\ce{N\!I_2}]$ and $[\ce{N\!I_1}]=[\ce{N\!I_2}]$ provide a positive (homogeneous) steady state. 
Moreover, the system describes a one-to-one exchange of $\Lambda_1$ with $\Lambda_2$ mediated and catalyzed by $[\ce{N\!I_1}]$ and $[\ce{N\!I_2}]$. In particular, the quantity $[\Lambda_1]+[\Lambda_2]$ is conserved. Since only two equations in the system are non-identically zero, this yields that the system is one ODE. 

Consequently, upon inspection of the Jacobian matrix, the capacity for zero-eigenvalue bifurcation is equivalent to the fact that the trace can change sign. Ordering the species as $(\ce{N\!I_1}, \Lambda_1, \ce{N\!I_2}, \Lambda_2)$ we get:
\begin{equation*}
    G=\scalebox{0.70}{$
     \begin{pmatrix}
       0 & 0 & 0 & 0 \\
         -\partial_{\ce{N\!I_1}} r_1 &\partial_{\Lambda_1}r_2 - \partial_{\Lambda_1}r_1 & \partial_{\ce{N\!I_2}} r_2  &\partial_{\Lambda_2}r_2 -\partial_{\Lambda_2}r_1\\
         0 & 0 & 0 & 0 \\
            \partial_{\ce{N\!I_1}} r_1 &-\partial_{\Lambda_1}r_2 +\partial_{\Lambda_1}r_1 & -\partial_{\ce{N\!I_2}} r_2  &-\partial_{\Lambda_2}r_2 +\partial_{\Lambda_2}r_1
     \end{pmatrix}$},
\end{equation*}
with trace $
\operatorname{Tr}G=(\partial_{\Lambda_1}r_2 - \partial_{\Lambda_1}r_1)+(-\partial_{\Lambda_2}r_2 +\partial_{\Lambda_2}r_1).$
At kinetic symmetry, we get that:
\begin{equation*}
\partial_{\Lambda_1}r_2=  \partial_{\Lambda_2}r_1 :=g_1, \quad \quad \text{and}\quad\quad
 \partial_{\Lambda_1}r_1=  \partial_{\Lambda_2}r_2 :=g_2,
\end{equation*}
and thus $
   \operatorname{Tr}G=2(g_1 - g_2).$
A zero-eigenvalue bifurcation is thus possible for $g_1=g_2$, and with this the capacity for differentiation.
\endproof

\proof[Proof of Prop.~\ref{prop:min245}]
The arguments are similar in all three cases. 
All three models are consistent: simply choosing identical flux for $r_1$ and $r_2$ yields the existence of (at least) one positive steady state.
Moreover, all three models are one ODE, as they contain two nonzero ODEs with one conservation law, respectively $(\ce{N\!E}_1+\ce{N\!E}_2)$ in \eqref{eq:odemodelmin2} and \eqref{eq:odemodelmin5}, $(\Lambda_1+\Lambda_2)$ in \eqref{eq:odemodelmin4}. This follows from the fact that the reaction networks all concern a one-to-one exchange of one entity: ($\ce{N\!E}_1 $ with $\ce{N\!E}_2$) in \eqref{eq:odemodelmin2} and \eqref{eq:odemodelmin5}, and ($\Lambda_1$ with $\Lambda_2$) in \eqref{eq:odemodelmin4}. Such exchange is catalyzed by the other species involved in the network.

For monostationarity and the consequent absence of capacity for differentiation, we check the trace of the respective symbolic Jacobians. A negative trace guarantees that the single nonzero eigenvalue is always negative and thus the steady-state is unique and locally stable for all choices of parameters, thereby excluding autocatalysis as well.

The three Jacobians are listed in sequence below. For \eqref{eq:odemodelmin2}, ordering the involved species as $(\ce{N\!I_1},\ce{N\!E_1}, \ce{D_1}, \ce{N\!I_2}, \ce{N\!E_2},\ce{D_2})$, the Jacobian matrix reads
\begin{equation*}
    G=\scalebox{0.70}{$
     \begin{pmatrix}
         0 & 0 & 0 & 0 & 0 & 0\\
         -\partial_{\ce{N\!I_1}} r_1 & -\partial_{\ce{N\!E_1}} r_1 &\partial_{\ce{D_1}}r_2 & \partial_{\ce{N\!I_2}} r_2 & \partial_{\ce{N\!E_2}}r_2 & -\partial_{\ce{D_2}}r_1\\
         0 & 0 & 0 & 0 & 0 & 0\\
         0 & 0 & 0 & 0 & 0 & 0\\
          \partial_{\ce{N\!I_1}} r_1 & \partial_{\ce{N\!E_1}} r_1 &-\partial_{\ce{D_1}}r_2 & -\partial_{\ce{N\!I_2}} r_2 & -\partial_{\ce{N\!E_2}}r_2 & \partial_{\ce{D_2}}r_1\\
         0 & 0 & 0 & 0 & 0 & 0
     \end{pmatrix}$},
\end{equation*}
 with negative trace
 $\operatorname{Tr}G=-\partial_{\ce{N\!E_1}} r_1 - \partial_{\ce{N\!E_2}}r_2<0$.  For \eqref{eq:odemodelmin4}, ordering the involved species as $(\Lambda_1, \ce{D_1}, \Lambda_2, \ce{D_2},)$, the Jacobian matrix reads
 \begin{equation*}
    G=\scalebox{0.70}{$
     \begin{pmatrix}
         -\partial_{\Lambda_1} r_1 &\partial_{\ce{D_1}}r_2 & \partial_{\Lambda_2} r_2  & -\partial_{\ce{D_2}}r_1\\
         0 & 0 & 0 & 0 \\
           \partial_{\Lambda_1} r_1 &-\partial_{\ce{D_1}}r_2 & -\partial_{\Lambda_2} r_2  & \partial_{\ce{D_2}}r_1\\
         0 & 0 & 0 & 0 \\
     \end{pmatrix}$},
\end{equation*}
 with negative trace
 $\operatorname{Tr}G=-\partial_{\Lambda_1} r_1 - \partial_{\Lambda_2}r_2<0$. For \eqref{eq:odemodelmin5}, ordering the involved species as $(\Lambda_1, \ce{N\!E_1}, \Lambda_2, \ce{N\!E_2})$, the Jacobian matrix reads
 \begin{equation*}
    G=\scalebox{0.70}{$    \begin{pmatrix}
0 & 0 & 0 & 0 \\
\partial_{\Lambda_1} r_{2}-\partial_{\Lambda_1}r_{1} & -\partial_{\ce{N\!E_1}}r_{1} & \partial_{\Lambda_2} r_{2}-\partial_{\Lambda_2}r_{1} &\partial_{\ce{N\!E_2}}r_{2}  \\
0 & 0 & 0 & 0 \\
-\partial_{\Lambda_1} r_{2}+\partial_{\Lambda_1}r_{1} & \partial_{\ce{N\!E_1}}r_{1}  & -\partial_{\Lambda_2} r_{2}+\partial_{\Lambda_2}r_{1} & -\partial_{\ce{N\!E_2}}r_{2} 
\end{pmatrix}$},
\end{equation*}
 with negative trace
 $\operatorname{Tr}G=-\partial_{\ce{N\!E_1}}r_{1} -\partial_{\ce{N\!E_2}}r_{2}<0$. 

In conclusion, \ref{eq:MII}, \ref{eq:miv} and \ref{eq:Mv} do not have the capacity for differentiation.
\endproof

\proof[Proof of Prop.~\ref{prop:nautI}]
The network is consistent and it admits a homogeneous steady-state, i.e. with $[\bar{\Lambda}_1]=[\bar{\Lambda}_2]$, whenever $
P=\eta \;r([\bar{\Lambda}_1],[\bar{\Lambda}_2])+ r([\bar{\Lambda}_2],[\bar{\Lambda}_1])$. Let $\partial_1 r(\cdot,\cdot)$ and $\partial_2 r(\cdot,\cdot)$ denote differentiation of $r$ with respect to its first and second arguments, respectively. The symbolic Jacobian reads
\begin{equation*}
G=
    \begin{pmatrix}
        -\eta\;\partial_1 r - \partial_2 r & -\eta\;\partial_2 r - \partial_1 r\\[6pt]
         -\eta \;\partial_2 r - \partial_1 r &  -\eta \; \partial_1 r - \partial_2 r
    \end{pmatrix},
\end{equation*}
with 
\begin{equation}\label{eq:detfactab}
\begin{split}
\operatorname{det} G &=\operatorname{det}\begin{pmatrix}
    -\eta & -1\\
    -1 & -\eta
\end{pmatrix} (\partial_1 r)^2+\operatorname{det}\begin{pmatrix}
-1 & -\eta\\
-\eta & -1
\end{pmatrix} (\partial_2 r)^2 \\
&=(\eta^2-1){\left[(\partial_1 r)^2-(\partial_2 r)^2\right]}.
\end{split}
\end{equation}
Thus, a zero-eigenvalue bifurcation occurs for $\eta \neq 1$ precisely when $
    \partial_2 r = \partial_1 r,$
with the instability region identified when $\partial_1 r > \partial_2 r$ for $\eta<1$ and $\partial_1 r < \partial_2 r$ for $\eta>1$. The instability motif is given by the matrices in the expansion \eqref{eq:detfactab} of the determinant of the symbolic Jacobian. Depending on $\eta>1$ or $\eta<1$ the unstable CS matrix is $\begin{pmatrix}
-1 & -\eta\\
-\eta & -1
\end{pmatrix}$ or $\begin{pmatrix}
    -\eta & -1\\
    -1 & -\eta
\end{pmatrix}$. Both cases identify the same instability motif \eqref{nonautiinstmotif}.
\endproof

\proof[Proof of Prop.~\ref{prop:nautII}]
The stoichiometric matrix of \ref{eq:modelmin2} reads:
\begin{equation*}
S=
    \begin{pmatrix}
       0 & 0 & -1 & 1 \\
       0 & 0 & 1 & -1 \\
     -1 & 1 & 0 & 0\\
     1 & -1 & 0 & 0
    \end{pmatrix},
\end{equation*}
and has a natural basis of the (right and left) kernel space as $\operatorname{ker}S=\operatorname{span}\{v_1,v_2\}$ with $v_1^T=(1,1,0,0)$ and $v_2^T=(0,0,1,1)$. Consistency of the network follows from considering any positive flux vector, e.g.  $v_1^T+v_2^T=(1,1,1,1)$, while the two-dimensionality of the left kernel of $S$ implies the existence of two independent conservation laws, i.e. $([\Lambda_1]+[\ce{I}_1])$ and $([\Lambda_2]+[\ce{I}_2])$. This implies that the system is (at most) two dimensional, so we just check the symbolic Jacobian at kinetic symmetry. Let $\partial_1 r_o$ and $\partial_2 r_o$ denote the derivative of $r$ w.r.t. its first and second arguments. At kinetic symmetry, we get:
\begin{equation*}
\begin{cases}
    \partial_{\Lambda_1} r_1=\partial_{\Lambda_2} r_3=\partial_1r_o\\
    \partial_{\Lambda_1} r_3=\partial_{\Lambda_2} r_1=\partial_2r_o\\
    \partial_{\ce{I}_1} r_2=\partial_{I_2} r_4 =\partial_I r_e
\end{cases}\quad,
\end{equation*}
and thereby the symbolic reactivity matrix $R$ reads:
\begin{equation*}
R=
    \begin{pmatrix}
        \partial_1r_o & 0& \partial_2r_o & 0\\
        0 & 0  & 0 & \partial_I r_e\\
        \partial_2r_o  & 0 & \partial_1r_o & 0\\
        0 & \partial_I r_e & 0 & 0
    \end{pmatrix},
\end{equation*}
yielding a symbolic Jacobian matrix at kinetic symmetry:
\begin{equation*}
    G=\begin{pmatrix}
-\partial_2 r_o & \partial_I r_e & -\partial_1 r_o & 0 \\
\partial_2 r_o & -\partial_I r_e & \partial_1 r_o & 0 \\
-\partial_1 r_o & 0 & -\partial_2 r_o & \partial_I r_e \\
\partial_1 r_o & 0 & \partial_2 r_o & -\partial_I r_e
\end{pmatrix},
\end{equation*}
with characteristic polynomial:
\begin{equation*}
    g(\lambda)=\lambda^{4}
+ \bigl(2\,\partial_I r_e + 2\,\partial_2 r_o\bigr)\lambda^{3}
+ \bigg((\partial_I r_e)^2
+ 2\,\partial_I r_e\,\partial_1 r_o
+ (\partial_2 r_o)^2
- (\partial_1 r_o)^2\bigg)\lambda^{2}.
\end{equation*}
As the system is two-dimensional, a zero-eigenvalue bifurcation occurs if and only if $
    \partial_I r_e+\partial_2 r_o=\partial_1 r_o,$
with instability area being identified by $\partial_I r_e+\partial_2 r_o<\partial_1 r_o$. 
Via Lemma \ref{lem:CSexpansion}, the CS matrix associated to the unstable summand $(\partial_1 r_o)^2$ and the instability motif \eqref{eq:instability motifnonautII} is
\begin{equation*}
S[\pmb{\kappa}]=
\begin{blockarray}{ccc}
& 1 & 3\\
\begin{block}{c(cc)}
\Lambda_1 & 0 & -1\\
\Lambda_2 & -1 & 0\\
\end{block}
\end{blockarray}\;.
\end{equation*}
\endproof

\textbf{Acknowledgments:}
A.S. was supported by the DFG (German Research Foundation) under
Germany’s Excellence Strategy EXC 2044/390685587 and EXC 2044/2-390685587,
Mathematics M\"unster: Dynamics - Geometry - Structure and
partially hosted by the Max-Planck-Insitute for Mathematics in the
Sciences in Leipzig. N.V.'s work was funded by the DFG, project n. 512355535, and by the MATOMIC consortium of the Novo Nordisk Foundation, grant NNF21OC0066551. 
{\small
\bibliography{references.bib}
\bibliographystyle{alpha}}

\vspace{2cm}

Angela Stevens\\
University of M\"unster\\
Institute for Analysis and Numerics\\
Einsteinstr. 62\\
D-48149 M\"unster\\
Germany\\
angela.stevens@uni-muenster.de\\

Nicola Vassena\\
Leipzig University\\
Faculty of Mathematics and Computer Science\\
Augustuspl. 10\\
04109 Leipzig\\
Germany\\
nicola.vassena@uni-leipzig.de

\end{document}